\documentclass[a4paper, 12pt]{article}
\usepackage[colorlinks=true, linkcolor=blue, citecolor=blue, urlcolor=blue]{hyperref}
\usepackage[english]{babel}
\usepackage[utf8]{inputenc}
\usepackage[backend=bibtex,
style=trad-abbrv]{biblatex}
\usepackage{geometry}

\usepackage{amsmath, amssymb, amsthm}
\usepackage{mathtools}
\usepackage{titling}
\usepackage{float}
\usepackage{mathrsfs}
\usepackage{dsfont}
\usepackage{upgreek}
\usepackage{tcolorbox}
\usepackage{xcolor}
\usepackage{extarrows}
\usepackage{caption}
\usepackage{array}
\usepackage{subcaption}
\usepackage{stmaryrd}
\usepackage{tikz}
\usepackage{tikz-3dplot}
\usetikzlibrary{calc, cd, patterns}
\usepackage[ruled]{algorithm2e}
\usepackage{multicol}
\usepackage{csquotes}
\usepackage{empheq}
\usepackage{enumitem}
\usepackage{titlesec}
\usepackage{titletoc}
\usepackage{aliascnt}
\usepackage{cleveref}
\geometry{a4paper,left=25mm,right=25mm,top=3cm,bottom=4cm}
\AtEveryBibitem{
  \clearname{editor}
  \clearfield{url}
  \clearfield{doi}
  \clearfield{issn}
  \clearfield{url}
  \clearfield{urlyear}
}
\makeatletter
\DeclareFontFamily{OMX}{MnSymbolE}{}
\DeclareSymbolFont{MnLargeSymbols}{OMX}{MnSymbolE}{m}{n}
\SetSymbolFont{MnLargeSymbols}{bold}{OMX}{MnSymbolE}{b}{n}
\DeclareFontShape{OMX}{MnSymbolE}{m}{n}{
    <-6>  MnSymbolE5
  <6-7>  MnSymbolE6
  <7-8>  MnSymbolE7
  <8-9>  MnSymbolE8
  <9-10> MnSymbolE9
  <10-12> MnSymbolE10
  <12->   MnSymbolE12
}{}
\DeclareFontShape{OMX}{MnSymbolE}{b}{n}{
    <-6>  MnSymbolE-Bold5
  <6-7>  MnSymbolE-Bold6
  <7-8>  MnSymbolE-Bold7
  <8-9>  MnSymbolE-Bold8
  <9-10> MnSymbolE-Bold9
  <10-12> MnSymbolE-Bold10
  <12->   MnSymbolE-Bold12
}{}

\let\llangle\@undefined
\let\rrangle\@undefined
\DeclareMathDelimiter{\llangle}{\mathopen}%
                    {MnLargeSymbols}{'164}{MnLargeSymbols}{'164}
\DeclareMathDelimiter{\rrangle}{\mathclose}%
                    {MnLargeSymbols}{'171}{MnLargeSymbols}{'171}
\makeatother

\DeclareMathAlphabet{\mcal}{OMS}{zplm}{m}{n}
\DeclareFontFamily{U}{stix2bb}{\skewchar\font127 }
\DeclareFontShape{U}{stix2bb}{m}{n} {<-> stix2-mathbb}{}
\DeclareMathAlphabet{\mathbbl}{U}{stix2bb}{m}{n}
\newcommand{\norm}[1]{\left\lVert#1\right\rVert}

\newcommand{\tnorm}[1]{{\left\vert\kern-0.25ex\left\vert\kern-0.25ex\left\vert #1 
    \right\vert\kern-0.25ex\right\vert\kern-0.25ex\right\vert}}
\newcommand{\abs}[1]{\left|#1\right|}
\newcommand{\brac}[1]{\left(#1\right)}
\newcommand{\cbrac}[1]{\left\{#1\right\}}

\newcommand{\sqbrac}[1]{\left[#1\right]}
\newcommand{\floor}[1]{\left\lfloor#1\right\rfloor}

\newcommand{\lrangle}[1]{\left\langle #1\right\rangle}
\newcommand{\llrrangle}[1]{\left\llangle #1\right\rrangle}
\newcommand{\KL}[2]{\mathrm{KL}\brac{#1 | #2}}
\newcommand{\OT}{\mathrm{OT}}

\newcommand{\grad}{\nabla}

\newcommand{\supp}[1]{\mathrm{supp}\brac{#1}}
\newcommand{\vspan}{\mathrm{span}}

\newcommand{\id}{\mathrm{Id}}
\newcommand{\diag}[1]{\mathrm{diag}\brac{#1}}
\newcommand{\dom}[1]{\mathrm{dom}\brac{#1}}

\newcommand{\interior}[1]{\mathrm{int}\brac{#1}}

\renewcommand{\div}{\mathrm{div}}
\renewcommand{\Bar}{\overline}

\newcommand{\inv}{^{-1}}
\newcommand{\pinv}{^\dagger}

\newcommand{\txtstar}{\textasteriskcentered{}}

\DeclareMathOperator*{\argmin}{arg\,min}
\makeatletter
\let\olddot\Dot
\makeatother
\renewcommand{\Dot}[1]{\smash{\olddot{#1}}}

\newcommand{\sbar}[1]{\smash{\Bar{#1}}}
\renewcommand{\tau}{\uptau}
\newcommand{\dd}{\mathrm d}

% Thms etc.
\newtheoremstyle{sl}% hnamei
{3pt}% hSpace above
{3pt}% hSpace below
{\slshape}% Body font
{}% Indent amount
{\bfseries}% Theorem head font
{.}% Punctuation after theorem headi
{.5em}% Space after theorem headi
{}% Theorem head spec (can be left empty, meaning ‘normal’)
\theoremstyle{sl}
\newtheorem{theorem}{Theorem}[section]

\newaliascnt{lemma}{theorem}
\newaliascnt{proposition}{theorem}
\newaliascnt{corollary}{theorem}
\newaliascnt{definition}{theorem}
\newaliascnt{remark}{theorem}
\newaliascnt{example}{theorem}

\newtheorem{lemma}[lemma]{Lemma}
\aliascntresetthe{lemma}

\newtheorem{proposition}[proposition]{Proposition}
\aliascntresetthe{proposition}

\newtheorem{corollary}[corollary]{Corollary}
\aliascntresetthe{corollary}

\newtheorem{definition}[definition]{Definition}
\aliascntresetthe{definition}

\newtheorem*{assumption}{Assumptions}

\theoremstyle{remark}
\newtheorem{remark}[remark]{Remark}
\aliascntresetthe{remark}

\aliascntresetthe{example}

\floatstyle{boxed}
\newfloat{listing}{H}{loi}
\floatname{listing}{Pseudocode}
\newlist{thmenum}{enumerate}{1}
\setlist[thmenum]{label=(\alph*), ref=(\alph*)}

% referencing
\numberwithin{equation}{section}
\crefrangeformat{equation}{(#3#1#4--#5#2#6)}
\crefmultiformat{equation}{(#2#1#3)}%
{ and~(#2#1#3)}{, (#2#1#3)}{ and~(#2#1#3)}
\crefformat{equation}{(#2#1#3)}
\crefname{theorem}{Theorem}{Theorems}
\crefname{lemma}{Lemma}{Lemmas}
\crefname{proposition}{Proposition}{Propositions}
\crefname{corollary}{Corollary}{Corollaries}
\crefname{algorithm}{Algorithm}{Algorithms}
\crefname{definition}{Definition}{Definitions}
\crefname{assumption}{Assumption}{Assumptions}
\crefname{remark}{Remark}{Remarks}
\crefname{example}{Example}{Examples}
\crefname{pcode}{Pseudocode}{Pseudocodes}
\newcommand{\refthmitem}[2]{\cref{#1} \ref{#1:#2}}
\crefformat{figure}{Figure #2#1#3}
\crefformat{section}{Section #2#1#3}
\crefformat{subsection}{Section #2#1#3}
\crefformat{appendix}{Appendix #2#1#3}
\newcommand{\theaffiliations}{}
\newcommand{\affiliations}[1]{\renewcommand{\theaffiliations}{#1}}
\let\oldmaketitle\maketitle
\renewcommand{\maketitle}{
\renewcommand{\thefootnote}{\fnsymbol{footnote}}
\oldmaketitle \theaffiliations
\renewcommand{\thefootnote}{\arabic{footnote}}}

\addbibresource{references.bib}
\title{Gradient Flows of Potential Energies \\ in the Geometry of Sinkhorn Divergences}
\author{Mathis Hardion\protect\footnotemark[1]
~and Hugo Lavenant\protect\footnotemark[2]}
\affiliations{\footnotetext[1]{Gustave Eiffel University, LIGM, Champs-sur-Marne, France.}\footnotetext[2]{Bocconi University, Department of Decision Sciences and BIDSA, Milan, Italy.}

}
\date{\today}

% Notations
\newcommand{\X}{\mathcal X}
\newcommand{\PX}{\mathcal{P(X)}}
\newcommand{\CX}{\mathcal{C(X)}}
\newcommand{\MX}{\mathcal{M(X)}}
\newcommand{\MoX}{\mathcal M_0(\X)}
\newcommand{\MpX}{\mathcal M_+(\X)}
\newcommand{\tgtspace}[1]{\mathcal H_{#1}^{*, 0}}
\newcommand{\R}{\mathds R}

\begin{document}

\maketitle

\begin{abstract}
We analyze the gradient flow of a potential energy in the space of probability measures when we substitute the optimal transport geometry with a geometry based on Sinkhorn divergences, a debiased version of entropic optimal transport. This gradient flow appears formally as the limit of the minimizing movement scheme, a.k.a. JKO scheme, when the squared Wasserstein distance is substituted by the Sinkhorn divergence. We prove well-posedness and stability of the flow, and that, in the long term, the energy always converges to its minimal value. The analysis is based on a change of variable to study the flow in a Reproducing Kernel Hilbert Space, in which the evolution is no longer a gradient flow but described by a monotone operator. Under a restrictive assumption we prove the convergence of our modified JKO scheme towards this flow as the time step vanishes. We also provide numerical illustrations of the intriguing properties of this newly defined gradient flow.

\medskip

\noindent \emph{Keywords.} Gradient flow; optimal transport; Sinkhorn divergence; monotone inclusions.

\medskip

\noindent \emph{2020 Mathematics Subject Classification}. Primary 49Q22; Secondary 47H05, 47J35.
\end{abstract}

%\tableofcontents

%\bigskip

\section{Introduction}\label{section:intro}

Gradient flows define evolution equations displaying an interplay between a functional $E$, defined on a space $\mcal Y$, and a distance $\mathsf{d}$ on $\mcal Y$. In Euclidean spaces they take the form
$\dot{y}_t = - \nabla E (y_t)$, where $-\nabla E$ gives the direction of steepest descent measured with respect to the (Euclidean) metric $\mathsf{d}$ on $\mcal Y$. 

A central application of this theory is when $\mcal Y = \PX$ is the space of probability distributions over a base space $\X$ endowed with the Monge-Kantorovich distance, a.k.a. Wasserstein distance, coming from optimal transport. Indeed, since the work of Jordan, Kinderlehrer and Otto \cite{JKO} it is understood that many meaningful Partial Differential Equations (PDEs), such as diffusion equations, can be interpreted as gradient flows in the geometry of optimal transport. Whereas the initial motivation was to build the natural geometry for equations coming from physics \cite{ottoporousmedium,adams2011large}, this theory has found numerous applications as a tool for the analysis of PDEs \cite{kellersegel,matthes2009family,aguehdegenerateparabolic}, to design numerical schemes to solve these PDEs \cite{peyre15,benamou2016augmented}, as a modeling tool \cite{crowdmotion}, and more recently to propose and study learning algorithms \cite{chizat2018global,lambert2022variational}. We will not attempt to provide an exhaustive list of references, and instead refer to the textbooks and surveys \cite{AGS,otam,santambrogio17,computationalot,chewi2024statistical} and references therein for an overview of breadth of this theory and its numerous applications. 

In this work we change the geometry, that is, the metric, by using entropic optimal transport \cite{cuturi13,computationalot,NutzIntroEOT} instead of plain optimal transport. Indeed the recent work \cite{RGSD} by the second author and co-authors explained how Sinkhorn divergences, a debiased version of entropic optimal transport, can be used to define a Riemannian-like geometry on the space of probability distributions. To motivate the introduction of this geometry, we first explain how one can use entropic optimal transport and Sinkhorn divergences in the canonical variational scheme used to define Wasserstein gradient flows, the so-called JKO scheme. This will lead us to the scheme \eqref{eq:SJKO} below, which is the starting point of our analysis.

\paragraph{Wasserstein space and gradient flows}

On a compact space \(\X\) with a continuous ground cost \(c: \X\times \X \to \R_+\), Optimal Transport (OT) gives a way to lift this cost to the space \(\PX\) of probability measures on \(\X\). The OT cost between \(\mu, \nu \in \PX\) is defined as
\begin{equation}\label{eq:MKdist}
\OT_0(\mu, \nu) \coloneqq \min_{\pi \in \Pi(\mu, \nu)}\iint_{\X\times \X} c(x, y)\dd\pi(x, y)
\end{equation}
where \(\Pi(\mu,\nu) \coloneqq \cbrac{\pi \in\mcal{P(X\times X)} \ : \ (\mathrm{proj}_1)_\sharp \pi = \mu, (\mathrm{proj}_2)_\sharp\pi = \nu}\) is the set of couplings with respective marginals \(\mu, \nu\), with \((\mathrm{proj}_i)_\sharp \pi\) the pushforward of \(\pi\) by the projection \(\mathrm{proj}_i\) onto the \(i\)th component of the product space \(\X\times \X\).
When \(\X\) is endowed with a distance \(\mathsf d\) and the ground cost is taken as \(c = \mathsf d^2\), the square root of the OT cost defines a distance on \(\PX\), called the Monge-Kantorovitch or Wasserstein distance and denoted \(W_2\coloneqq \sqrt{\OT_0}\). We refer to the textbooks \cite{villani2009,otam,AGS,computationalot} for an exposition of this theory.

In the Wasserstein space \(\brac{\PX, W_2}\), it is possible to define a notion of gradient flow thanks to the minimizing movement scheme, named JKO after the authors of \cite{JKO}.
For a time step \(\tau >0\) and a lower bounded and lower semi-continuous energy functional \(E:\PX \to \R\), it consists in starting from $\bar{\mu}_0 \in \PX$ and defining a sequence $(\mu^\tau_k)_{k \geq 0}$ recursively via $\mu^\tau_0 = \bar{\mu}_0$ and
\begin{equation}
\mu_{k+1}^\tau \in \argmin_{\mu\in\PX} \; E(\mu) + \frac1{2\tau}W^2_2\brac{\mu,\mu_k^\tau}.
\tag{JKO}
\label{eq:JKO}
\end{equation}
In the case where we replace \(\PX\) by a Euclidean space and \(W_2\) by the Euclidean distance, we recover the implicit Euler discretization of $\dot{x}_t = - \nabla E(x_t)$.
Under some conditions on the energy $E$, the discrete sequence $(\mu^\tau_k)_{k \geq 0}$, suitably interpolated in time (e.g. with a piecewise constant interpolation), converges to a continuous curve $(\mu_t)_{t \geq 0}$ which solves in a metric sense the gradient flow equation $\Dot \mu_t = - \nabla_{W_2}E(\mu_t)$.  
We refer to the surveys \cite{santambrogio17,usersguideOT} and the main reference book \cite{AGS} for accounts of the theory of gradient flows in metric spaces and particularly the Wasserstein space. 

With $E : \mu \mapsto \int V \dd \mu + \int \mu \log \mu$ for a smooth potential \(V:\X \to \R\), the corresponding gradient flow, that is, the limit when $\tau \to 0$ of the curve $\mu^\tau$ solving~\eqref{eq:JKO}, solves the Fokker Planck equation
\begin{equation}
\label{eq:FP}
\frac{\partial \mu_t}{\partial t} = \div\brac{\mu_t\grad V} + \Delta \mu_t.
\end{equation}

\paragraph{Entropic regularization of the JKO scheme}

For a regularization parameter \(\varepsilon > 0\), the Entropic Optimal Transport (EOT) problem, or Schrödinger problem, is the entropic regularization of the Monge-Kantorovitch problem, defining a cost
\begin{equation}\label{eq:OTeps}
\OT_\varepsilon(\mu, \nu) \coloneqq \min_{\pi \in \Pi(\mu, \nu)}\iint_{\mcal X\times \mcal X} c(x, y)\dd\pi(x, y) + \varepsilon\KL{\pi}{\mu\otimes\nu}
\end{equation}
where the KL divergence is given by
\begin{equation*}
\KL{\pi}{\gamma} \coloneqq \begin{cases}
\displaystyle{\iint_{\mcal X\times \mcal X}\log\brac{\frac{\dd\pi}{\dd\gamma}}\dd\pi} & \text{ if }\pi \ll \gamma,\\
+\infty &\text{ otherwise}.
\end{cases}
\end{equation*}
The loss \(\OT_\varepsilon\) converges to \(\OT_0\) as \(\varepsilon \to 0\) \cite{schrodingertomk}, but is also interesting \emph{per se} for $\varepsilon$ fixed.
On top of better numerical tractability \cite{computationalot,schmitzer2019stabilized}, it has smaller statistical complexity mitigating the curse of dimensionality \cite{genevay2019sample,menasampleclt} and is smoother with efficiently computed gradients \cite{genevay18a}. We refer to \cite{leonard14survey,NutzIntroEOT} for expositions of the theory of EOT, and to \cite[Chapter 4]{computationalot} for computational considerations.
 
Given these practical advantages, it is not surprising that it was proposed to replace $W_2^2 = \OT_0$ by \(\OT_\varepsilon\) in the scheme \eqref{eq:JKO}. Specifically the work \cite{peyre15} introduced the recursive scheme
\begin{equation}
\mu_{k+1}^\tau \in \argmin_{\mu\in\mcal{P(X)}} \; E(\mu) + \frac1{2\tau}\OT_\varepsilon(\mu, \mu_k^\tau),
\tag{EJKO}
\label{eq:EJKO}
\end{equation} 
we call it the ``Entropic JKO scheme''. In \cite{peyre15}, the author recasts \eqref{eq:EJKO} as a proximal stepping with respect to the KL divergence and takes advantage of this structure to devise an efficient computational scheme to solve it. 

The main drawback is that \(\OT_\varepsilon\) induces a bias, as generally speaking \(\OT_\varepsilon(\mu, \mu) > 0\) and more importantly \(\mu \notin \argmin \{ \OT_\varepsilon(\mu, \nu) : \nu\in\PX \}\). Thus global minima of $E$ are not stable by the iterates of~\eqref{eq:EJKO}, and, when looking at the limit \(\tau \to 0\) for a fixed \(\varepsilon>0\), one cannot recover a continuous curve as mentioned in the conclusion of \cite{peyre15}. 
A natural way to fix this issue is to take $\varepsilon \to 0$ together with $\tau \to 0$. With $E(\mu) = \int V \dd \mu + \int \mu \log \mu$ it was studied in \cite{carlier2017convergence}: the authors show that solutions to the scheme~\eqref{eq:EJKO} also converge to the solution of the Fokker-Planck equation~\eqref{eq:FP} when both \(\varepsilon\) and the time step \(\tau\) jointly vanish at a suitable rate with the necessary condition \(\varepsilon = o(\tau)\).
This result was recently generalized by \cite{baradat2025}, which shows that, when taking \( \varepsilon / \tau \to \alpha \in \R_+\), a linear diffusion term proportional to \(\alpha\) is added to the limiting equation\footnote{Note however that the results of \cite{baradat2025} apply to a slightly different version of \eqref{eq:EJKO}, as the reference measure in the $\mathrm{KL}$ penalization in~\eqref{eq:OTeps} is not $\mu \otimes \nu$ but the Lebesgue measure.}.

In this work, we keep $\varepsilon > 0$ fixed once and for all, it will not be vanishing with $\tau$: one motivation is numerical, as the computational complexity of solving~\eqref{eq:EJKO} degrades as $\varepsilon \to 0$. 
In order to still have a well-defined limit, we modify the distance: instead of naively substituting $\OT_0$ by $\OT_\varepsilon$, we will plug in its debiased version, a.k.a. the Sinkhorn divergence.

\paragraph{The Sinkhorn divergence}

The Sinkhorn divergence is the debiased version of the entropic optimal transport problem, defined in \cite{genevay18a} (see also \cite{Ramdas,salimans2018improving}) as
\begin{equation*}
S_\varepsilon(\mu, \nu) \coloneqq \OT_\varepsilon(\mu, \nu) - \frac12\OT_\varepsilon(\mu, \mu) - \frac12 \OT_\varepsilon(\nu, \nu).
\end{equation*}
By construction $S_\varepsilon(\mu,\mu) = 0$ for all $\mu$, and \cite{feydy19} proved that it can serve as a proper loss function.

\begin{theorem}[{{\cite[Theorem 1]{feydy19}}}]\label{thm:feydythm1}
Assume that \(c:\mcal X\times \mcal X \to \R_+\) is a (jointly) Lipschitz cost function such that \(k_c: (x, y) \mapsto \exp(-c(x, y) /\varepsilon )\) is a positive definite universal kernel. 

Then \(S_\varepsilon(\mu,\nu) \geq 0\) with equality if and only if $\mu = \nu$, and \(S_\varepsilon(\mu,\nu) \to 0\) if and only if $\mu \to \nu$ weakly-\txtstar{}. Moreover, $S_\varepsilon$ is convex in each of its input variables.
\end{theorem}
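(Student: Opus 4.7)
The plan is to exploit the dual formulation of entropic optimal transport, with particular attention paid to the symmetric Schrödinger problem. Recall that
\[
\OT_\varepsilon(\mu,\nu) = \sup_{f,g \in \CX}\cbrac{\int f \dd\mu + \int g \dd\nu - \varepsilon \iint \brac{e^{(f(x)+g(y)-c(x,y))/\varepsilon} - 1}\dd\mu(x)\dd\nu(y)},
\]
with optimizers $(f_{\mu,\nu}, g_{\mu,\nu})$ the Schrödinger potentials. By symmetry, for $\OT_\varepsilon(\mu,\mu)$ one can choose a single autocorrelation potential $f_\mu := f_{\mu,\mu} = g_{\mu,\mu}$, yielding $\OT_\varepsilon(\mu,\mu) = 2\int f_\mu\dd\mu$ and the first-order condition $h_\mu(x)\int k(x,y) h_\mu(y)\dd\mu(y) = 1$ for $\mu$-a.e.\ $x$, where $h_\mu := e^{f_\mu/\varepsilon}$ and $k(x,y) := e^{-c(x,y)/\varepsilon}$ is the reproducing kernel of the RKHS $\mcal H$ from the hypothesis.

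\textbf{Positivity and identifiability.} Plugging the admissible (generally suboptimal) pair $(f_\mu, f_\nu)$ into the dual for $\OT_\varepsilon(\mu,\nu)$ and subtracting the two autocorrelation identities yields
\[
S_\varepsilon(\mu,\nu) \geq \varepsilon\sqbrac{1 - \iint k(x,y)\, h_\mu(x) h_\nu(y)\dd\mu(x)\dd\nu(y)}.
\]
Integrating the fixed-point equation against $\mu$ gives $\iint k(x,y)\, h_\mu(x) h_\mu(y)\dd\mu\dd\mu = 1$, i.e.\ $\snorm{h_\mu \mu}_\mcal H = 1$, and likewise for $\nu$. Cauchy--Schwarz in $\mcal H$ then yields $\lrangle{h_\mu\mu, h_\nu\nu}_\mcal H \leq 1$, so $S_\varepsilon(\mu,\nu) \geq 0$. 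Equality forces $h_\mu\mu = h_\nu\nu$ in $\mcal H$; universality of $k$ upgrades this to equality of positive measures, and reinserting into the fixed-point equation yields $h_\mu = h_\nu$, whence $\mu = \nu$.

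\textbf{Weak-$\txtstar$ convergence.} Since $\X$ is compact, $\PX$ is weak-$\txtstar$ compact; standard stability of Schrödinger potentials gives joint weak-$\txtstar$ continuity of $\OT_\varepsilon$, hence of $S_\varepsilon$. One direction is then immediate. Conversely, any weak-$\txtstar$ cluster point $\mu^\star$ of a sequence with $S_\varepsilon(\mu_n,\nu) \to 0$ satisfies $S_\varepsilon(\mu^\star,\nu) = 0$, hence $\mu^\star = \nu$ by identifiability, so the whole sequence converges to $\nu$.

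\textbf{Convexity.} Fix $\nu$. The semi-dual representation exhibits $\mu \mapsto \OT_\varepsilon(\mu,\nu)$ as a supremum of functionals affine in $\mu$, hence convex. The delicate step---which I expect to be the main obstacle---is to show that $\mu \mapsto \OT_\varepsilon(\mu,\mu)$ is \emph{concave}. I would proceed via the change of variable $\alpha := h\mu$ in the symmetric dual, which rewrites
\[
\OT_\varepsilon(\mu,\mu) = \sup_{\alpha \in \MpX}\cbrac{-2\varepsilon\,\KL{\mu}{\alpha} - \varepsilon\snorm{\alpha}_\mcal H^2 + \varepsilon}
\]
with the convention $\KL{\mu}{\alpha} = +\infty$ when $\mu \not\ll \alpha$. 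The right-hand side is \emph{jointly} concave in $(\mu,\alpha)$: $\KL{\cdot}{\cdot}$ is jointly convex (a standard fact), and $\snorm{\alpha}_\mcal H^2$ is a convex quadratic. Since a supremum over one variable of a jointly concave function is concave in the other, $\OT_\varepsilon(\mu,\mu)$ is concave in $\mu$; combined with convexity of $\OT_\varepsilon(\cdot,\nu)$ this proves convexity of $S_\varepsilon(\cdot,\nu)$. The remaining work, namely justifying the change of variable on the extended space of positive measures, is standard convex-analysis bookkeeping.
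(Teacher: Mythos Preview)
The paper does not give its own proof of this statement; it is quoted as \cite[Theorem~1]{feydy19} and used as a black box throughout. Your argument is correct and is essentially the proof given in \cite{feydy19}.

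A couple of remarks for completeness. The positivity step---plugging the suboptimal pair $(f_\mu,f_\nu)$ into the dual and reducing to Cauchy--Schwarz on the unit sphere of $\mcal H$ via $\snorm{h_\mu\mu}_{\mcal H}=\snorm{h_\nu\nu}_{\mcal H}=1$---is exactly the argument of \cite{feydy19}, as is the compactness proof of the topological characterization. For convexity, the partial-maximization principle you invoke (the supremum over one variable of a \emph{jointly} concave function is concave in the other) is correct and is again the route taken in \cite{feydy19}. The step you defer as ``bookkeeping'', namely extending the supremum from measures $\alpha=h\mu$ with $h>0$ continuous to all of $\MpX$, does deserve a word: the cleanest way to close it is to observe that $\alpha\mapsto -2\varepsilon\,\KL{\mu}{\alpha}-\varepsilon\snorm{\alpha}_{\mcal H}^2$ is strictly concave on $\{\alpha:\mu\ll\alpha\}$ and that the Schr\"odinger point $\alpha^\star=e^{f_\mu/\varepsilon}\mu$ satisfies the first-order optimality condition over the full cone (both derivative terms equal $2\varepsilon\,e^{-f_\mu/\varepsilon}$ by the self-transport equation), hence is the global maximizer.
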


Reminders on positive definite kernels and Reproducing Kernel Hilbert Spaces (RKHS) are given in \cref{appendix:RKHS}, and we also refer to \cite{micchelliuniversalkernels, berlinet2011rkhs} for more details. The assumptions of the theorem are satisfied in the usual case where the cost \(c\) is the squared Euclidean distance on \(\mcal X\) a compact subset of $\R^d$. However $S_\varepsilon$ is not the square of a distance, contrary to $S_0 = \OT_0 = W_2^2$ \cite[Theorem 7.1]{RGSD}.

We debias the scheme \eqref{eq:EJKO} and obtain the Sinkhorn-JKO scheme defined as follows: starting from $\mu^\tau_0 = \bar{\mu}_0 \in \PX$, for $k \geq 0$, 
\begin{equation}
\tag{SJKO}
\label{eq:SJKO}
\mu_{k+1}^\tau \in \argmin_{\mu\in\mcal{P(X)}} \; E(\mu) + \frac1{2\tau}S_\varepsilon(\mu, \mu_k^\tau).
\end{equation}
By \cref{thm:feydythm1}, as long as $E$ is convex and lower semi-continuous, each iteration of~\eqref{eq:SJKO} is a well-posed convex problem, which is numerically tractable as we illustrate in this work.

\begin{center}
\fbox{\begin{minipage}{0.9\textwidth}
The aim of the present work is to understand the evolution that results from taking the limit $\tau \to 0$ in the scheme~\eqref{eq:SJKO}, focusing on the case of a potential energy $E : \mu \mapsto \int_\X V \dd \mu$,
for $V : \X \to \R$. We will connect it to a gradient flow in the geometry of Sinkhorn divergences, investigated in \cite{RGSD}.
The main results are the derivation of the limit equation (\cref{def:sinflow}), its analysis (\cref{thm:main}), together with the proof of the convergence of the outputs of~\eqref{eq:SJKO} to the limit equation in a restrictive setting (\cref{thm:tauto0}).
\end{minipage}}
\end{center}

\paragraph{Contributions and outline}

We start by deriving formally the evolution equation obtained when taking the limit $\tau \to 0$ in~\eqref{eq:SJKO}. We call \emph{Sinkhorn potential flow} the limit equation, see~\eqref{eq:Sinflow_formal}. At first glance, it seems ill-posed as it involves a deconvolution (\cref{rm:ill_posed}). We interpret, still at a formal level, the Sinkhorn potential flow as the gradient flow of $E$ in the geometry of Sinkhorn divergences. This is done in Section~\ref{sec:formal}. Then in Section~\ref{section:prelim} we recall the framework developed in \cite{RGSD} to understand the geometry of Sinkhorn divergences. In particular, \cite{RGSD} suggests the key change of variables to analyze the Sinkhorn potential flow: it consists of a non-linear embedding of $\PX$ into the Reproducing Kernel Hilbert Space (RKHS) $\mcal H_c$ built on the kernel $k_c = \exp(-c/\varepsilon)$. 

After all these preliminaries in Section~\ref{section:prelim} we formulate our main results in Section~\ref{sec:main_res}: that the equation~\eqref{eq:Sinflow_formal}, derived formally as the limit $\tau \to 0$ in~\eqref{eq:SJKO}, is well-posed, stable, and that in the long term the energy always converges to its minimal value (\cref{thm:main}). The next two sections are then focused on the proof of the main results. As anticipated, the key point is to rewrite the evolution in the RKHS $\mcal H_c$. Indeed, in this space the flow has a peculiar interpretation as a constrained rotation on the unit sphere of $\mcal H_c$ (\cref{rmk:maximal_monotone}). With the help of the understanding of the structure of the flow in Section~\ref{sec:structure}, we prove our main theorem in Section~\ref{sec:proofs}. 

We are able to recover the flow as the limit of the scheme~\eqref{eq:SJKO} when \(\X\) is a finite set of points (\cref{thm:tauto0}), 
while a lack of regularity in the Sinkhorn geometry makes the general case challenging (\cref{rem:obstructions}). The details are in Section~\ref{section:tauto0}.

In Section~\ref{sec:numerics}, we propose a simple computational scheme to solve~\eqref{eq:SJKO}, and provide illustrations of the flow's behavior for convex and non-convex potentials, in Eulerian and Lagrangian discretizations.

\subsection{Related work}

The scheme~\eqref{eq:SJKO} is new to the best of our knowledge. We are aware of the recent work \cite{aubin2025evolution}, which defines a general framework for schemes of the type~\eqref{eq:SJKO}, not necessarily in the space of probability measures and where $S_\varepsilon$ is substituted by any ``cost function''. Though~\eqref{eq:SJKO} is formally an instance of the framework \cite{aubin2025evolution}, the authors develop there a theory based on curvature of the cost (here $S_\varepsilon$) and convexity of the energy $E$. It is unclear if these assumptions are satisfied in our context, and, as the reader will see, our methods do not rely on curvature nor convexity, but they are very specific to the setting of the Sinkhorn divergence and potential energies.

We already mentioned \cite{peyre15,carlier2017convergence,baradat2025} which study~\eqref{eq:EJKO}, but need to send $\varepsilon \to 0$ together with $\tau$. In addition, we point out the works \cite{sander2022sinkformers,deb2023wasserstein,agarwal2025langevin} which study different flows in the space of probability measures built on EOT, and they also have a focus on a small $\varepsilon$ limit. 

Finally, and it was indeed the primary motivation of \cite{genevay18a} when introducing $S_\varepsilon$, one can look at a gradient flow in the classical optimal transport geometry, but when the energy $E$ is defined using $S_\varepsilon$, as is done for instance in \cite{carlier2024displacement} where the well-posedness of such a flow is proven, and in \cite{zhu2024neural} where it is used as a generative model by learning the velocity field of the flow on a dataset.

\subsection{Notations and setting}

We work on a compact metric space \((\X, \mathsf d)\). We write $\CX$ for the space of continuous functions over $\X$ valued in $\R$, endowed with the topology of uniform convergence. We denote by \(\MX\) the space of signed Radon measures on \(\X\), identified with the dual of \(\CX\) through the Riesz-Markov-Kakutani theorem, and endowed with the weak-\txtstar{} topology generated by this duality pairing. We also write \(\MpX\) for non-negative measures, \(\PX\) for Radon probability measures. The topological support of a non-negative measure $\sigma \in \MpX$ is denoted $\supp \sigma$.  We use $\CX / \R$ to denote $\CX$ quotiented by constant functions, it is canonically in duality with \(\MoX\) the space of balanced measures, i.e. measures \(\nu\in\MX\) verifying \(\int 1\dd\nu = 0\). 

The duality pairing between a space and its dual is denoted by \(\lrangle{\cdot, \cdot}\), and the dot product in a Hilbert space \(\mcal H\) is denoted \(\lrangle{\cdot, \cdot}_{\mcal H}\), with the corresponding norm denoted as \(\norm{\cdot}_{\mcal H}\).

If $(\mu_t)_{t \in I}$ is a curve defined on an interval $I$ and valued in $\PX$, we write $\dot{\mu}_t$ for $\frac{\partial \mu_t}{\partial t}$ whenever it exists. It is defined as belonging to the dual of a subspace of $\CX$ via the formula:
\begin{equation*}
\langle \dot{\mu}_t, \phi \rangle = \left. \frac{\dd}{\dd s} \langle \mu_s, \phi \rangle \right|_{s=t},
\end{equation*}
for $\phi$ belonging to a class of functions which will be clarified with the context.

\begin{assumption}[valid throughout the article]
\label{asmp:main}
The set $\X$ is a compact metric space.
We fix $\varepsilon > 0$ and a symmetric, non-negative cost \(c \in \mcal C(\X\times \X)\), assumed to induce a positive definite universal kernel \(k_c \coloneqq \exp(-c/\varepsilon)\). We consider \(V \in \CX\) and denote the potential energy \(E:\mu \mapsto \lrangle{\mu, V}\).
\end{assumption}

\noindent In particular our theory applies when $\X$ is a compact subset of $\R^d$ and $c(x,y) = \| x-y \|^2$ is the squared Euclidean distance because the Gaussian kernel is positive definite and universal \cite[Section 4]{micchelliuniversalkernels}.

\section{Derivation of the Sinkhorn potential flow}
\label{sec:formal}

The goal of this section is to derive, at least heuristically, the limit obtained when $\tau \to 0$ in the scheme~\eqref{eq:SJKO} and to make the link with the geometry of Sinkhorn divergences investigated in~\cite{RGSD}. As a preliminary step we need additional results on the entropic optimal transport problem.

\subsection{Dual EOT problem}

The EOT problem \eqref{eq:OTeps} has the following dual formulation \cite[Remark 4.24]{computationalot}:
\begin{equation}\label{eq:dualOTeps}
\OT_\varepsilon(\mu, \nu) = \sup_{f, g \in \CX} \lrangle{\mu, f} + \lrangle{\nu, g} - \varepsilon \lrangle{\mu\otimes \nu, \exp\brac{\frac1\varepsilon\brac{f\oplus g - c}}-1},
\end{equation}
with the notation \(f\oplus g: (x, y) \mapsto f(x) + g(y)\). This problem has maximizers \(f_{\mu, \nu}, g_{\mu, \nu}\) that we call Schrödinger potentials. They are characterized as solutions to the Euler-Lagrange equations, the so-called Schrödinger system:
\begin{empheq}[left=\empheqlbrace]{equation}\label{eq:schrödingersystem}
\begin{aligned}
f_{\mu, \nu} &= T_\varepsilon(g_{\mu, \nu}, \nu),\\
g_{\mu, \nu} &= T_\varepsilon(f_{\mu, \nu}, \mu).
\end{aligned}
\end{empheq}
Here the Sinkhorn mapping \(T_\varepsilon: \CX \times \PX \to \CX\) is defined, for $f \in \CX$ and $\mu \in \PX$, by 
\begin{equation}
\label{eq:TSinkhorn}
T_\varepsilon(f, \mu) : y \mapsto -\varepsilon \log\int_{\mcal X}\exp\brac{\frac1\varepsilon\brac{f(x)-c(x, y)}}\dd\mu(x).
\end{equation}
These potentials are unique up to constant shifts, i.e. if $(f_{\mu,\nu}, g_{\mu,\nu})$ solves~\eqref{eq:schrödingersystem} then the set \(\cbrac{(f_{\mu, \nu} + \lambda, g_{\mu, \nu} - \lambda), \lambda \in \R}\) describes all solutions of \eqref{eq:schrödingersystem}. We normalize the potentials such that $\langle \mu, f_{\mu,\nu} \rangle = \langle \nu, g_{\mu,\nu} \rangle$: with this convention \( g_{\mu, \nu} = f_{\nu, \mu}\) and in particular \(f_{\mu, \mu} = g_{\mu, \mu}\) which we shall denote \(f_\mu\) for short. 
The optimal transport plan in \eqref{eq:OTeps} is given by 
\(\pi = \exp\brac{\frac1\varepsilon\brac{f_{\mu, \nu} \oplus g_{\mu, \nu} - c}}(\mu\otimes\nu)\).

% and moreover, at optimality the loss reads
% $\OT_\varepsilon(\mu, \nu) = \lrangle{\mu, f_{\mu,\nu}} + \lrangle{\nu, f_{\nu, \mu}}$.

The Schrödinger potentials \( f_{\mu,\nu}, f_{\nu, \mu}\) correspond to the gradients of \(\OT_\varepsilon\) with respect to the input measures \( \mu, \nu\) \cite[Proposition 2]{feydy19}. In particular it entails the following result for the Sinkhorn divergence: fixing $\mu, \nu \in \PX$, take $\sigma \in \mcal M_0(\X)$ such that $t \mapsto \mu + t \sigma$ is a probability distribution for $t \geq 0$ small enough. Then, from \cite[Proposition 2]{feydy19} we obtain
\begin{equation}
\label{eq:gradientSeps}
\left. \frac{\dd  }{\dd t} S_\varepsilon(\mu + t\sigma, \nu)  \right|_{t=0} = \lrangle{\sigma, f_{\mu, \nu} - f_{\mu}}. 
\end{equation}  

\subsection{Formal limit in the scheme~\texorpdfstring{\eqref{eq:SJKO}}{(SJKO)}}

Take $\bar{\mu}_0 \in \PX$ and let $(\mu^\tau_k)_{k \geq 0}$ be defined iteratively by $\mu^\tau_0 = \bar{\mu}_0$ and~\eqref{eq:SJKO}. Note that the iterates always exist as $\PX$ is compact and $S_\varepsilon$ and $E$ are continuous with respect to weak-\txtstar{} convergence. We start by deriving rigorously the optimality condition in~\eqref{eq:SJKO}.

\begin{lemma}
\label{lm:optimality_SJKO}
If $\mu^\tau_{k+1}$ solves~\eqref{eq:SJKO}, then there exists $p^\tau_{k+1} \in \CX$ with $p^\tau_{k+1} \leq 0$, $p^\tau_{k+1} = 0$ on $\supp{\mu^\tau_{k+1}}$ and a constant $C^\tau_{k+1} \in \R$ such that 
\begin{equation}
\label{eq:optimality_SJKO}
\frac{f_{\mu^\tau_{k+1}, \mu^\tau_k} - f_{\mu^\tau_{k+1}}}{2 \tau} + V + p^\tau_{k+1} = C^\tau_{k+1}. 
\end{equation}
\end{lemma}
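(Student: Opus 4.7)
The strategy is to compute the first variation of the objective $\mu \mapsto E(\mu) + \frac{1}{2\tau} S_\varepsilon(\mu, \mu^\tau_k)$ at the minimizer $\mu^\tau_{k+1}$, then extract the Karush–Kuhn–Tucker conditions associated with the two constraints defining $\PX$: the equality constraint $\langle \mu, 1\rangle = 1$ (which will produce the constant $C^\tau_{k+1}$) and the inequality constraint $\mu \geq 0$ (which will produce the nonnegative multiplier $-p^\tau_{k+1}$). The derivative formula \eqref{eq:gradientSeps}, which is the main analytic input provided by the excerpt, delivers precisely the continuous function $\phi^\tau_{k+1} := V + \frac{1}{2\tau}\bigl(f_{\mu^\tau_{k+1},\mu^\tau_k} - f_{\mu^\tau_{k+1}}\bigr) \in \CX$ as the first variation.

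The plan is as follows. First, given any $\nu \in \PX$, I would set $\sigma = \nu - \mu^\tau_{k+1} \in \MoX$ and observe that $\mu^\tau_{k+1} + t\sigma = (1-t)\mu^\tau_{k+1} + t\nu$ stays in $\PX$ for $t \in [0,1]$, hence is an admissible one-sided perturbation. Applying \eqref{eq:gradientSeps} with $\mu = \mu^\tau_{k+1}$, $\nu = \mu^\tau_k$ and combining with the trivial linearity of $E$, the right-derivative of the objective at $t=0$ equals $\langle \nu - \mu^\tau_{k+1}, \phi^\tau_{k+1}\rangle$. Minimality then yields
\[
\langle \nu, \phi^\tau_{k+1}\rangle \;\geq\; \langle \mu^\tau_{k+1}, \phi^\tau_{k+1}\rangle \qquad \text{for every } \nu \in \PX.
\]

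Second, I would upgrade this integral inequality to a pointwise one. Setting $C^\tau_{k+1} := \langle \mu^\tau_{k+1}, \phi^\tau_{k+1}\rangle$, testing against Dirac masses $\nu = \delta_x$ (allowed since $\phi^\tau_{k+1}$ is continuous, the Schrödinger potentials being continuous via their log-sum-exp representation \eqref{eq:TSinkhorn}) gives $\phi^\tau_{k+1}(x) \geq C^\tau_{k+1}$ for every $x \in \X$. Define
\[
p^\tau_{k+1} := C^\tau_{k+1} - \phi^\tau_{k+1} \in \CX,
\]
so that $p^\tau_{k+1} \leq 0$ pointwise and \eqref{eq:optimality_SJKO} is satisfied by construction. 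It remains to check the complementary slackness $p^\tau_{k+1} = 0$ on $\supp \mu^\tau_{k+1}$: integrating the nonnegative continuous function $-p^\tau_{k+1} = \phi^\tau_{k+1} - C^\tau_{k+1}$ against $\mu^\tau_{k+1}$ gives exactly $0$ by the definition of $C^\tau_{k+1}$, and a nonnegative continuous function with zero integral against a Radon measure must vanish on its topological support.

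The only nontrivial step is the first one, which requires a genuine right-differentiability of $S_\varepsilon(\cdot, \mu^\tau_k)$ at $\mu^\tau_{k+1}$ along the admissible one-parameter family $(1-t)\mu^\tau_{k+1} + t\nu$. I would cite \eqref{eq:gradientSeps} (itself a consequence of \cite[Proposition~2]{feydy19}) since it is precisely the content of that formula; no additional regularity of the minimizer is needed because the variation is taken inside $\PX$ and the Schrödinger potentials depend continuously enough on the measures. Apart from this, everything else is a standard KKT bookkeeping, and no further obstacle should arise.
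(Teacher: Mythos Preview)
Your proposal is correct and follows essentially the same argument as the paper's proof: both use the linear competitor $(1-t)\mu^\tau_{k+1}+t\nu$, invoke \eqref{eq:gradientSeps} to get $\langle \nu,\phi^\tau_{k+1}\rangle \geq \langle \mu^\tau_{k+1},\phi^\tau_{k+1}\rangle$ for all $\nu\in\PX$, and then define $p^\tau_{k+1}=C^\tau_{k+1}-\phi^\tau_{k+1}$. The only cosmetic difference is that the paper sets $C^\tau_{k+1}:=\min_\X \phi^\tau_{k+1}$ and reads off the support condition from ``$\mu^\tau_{k+1}$ is concentrated on $\argmin \phi^\tau_{k+1}$'', whereas you set $C^\tau_{k+1}:=\langle\mu^\tau_{k+1},\phi^\tau_{k+1}\rangle$ and recover the support condition via the zero-integral argument; the two constants coincide and the arguments are equivalent.
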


\begin{proof}
Fix $\nu \in \PX$ and let us use $(1-t) \mu^\tau_{k+1} + t \nu$ as a competitor in~\eqref{eq:SJKO}. Then from~\eqref{eq:gradientSeps}, taking the limit $t \to 0$ and with $\sigma = \nu - \mu^\tau_{k+1}$ we obtain 
\begin{equation*}
\frac{1}{2\tau} \langle \sigma, f_{\mu^\tau_{k+1}, \mu^\tau_k} - f_{\mu^\tau_{k+1}} \rangle + \lrangle{\sigma, V} \geq 0. 
\end{equation*}
With $g = (f_{\mu^\tau_{k+1}, \mu^\tau_k} - f_{\mu^\tau_{k+1}})/(2 \tau) + V$ which is a continuous function, it implies that $\lrangle{\mu^\tau_{k+1},g} \leq \lrangle{\nu,g}$ for any $\nu \in \PX$. It easily implies that $\mu^\tau_{k+1}$ is concentrated on the points of minimum of $g$: calling $C^\tau_{k+1}$ the minimal value of $g$, we obtain the result by defining $p^\tau_{k+1} = C^\tau_{k+1} - g$. 
\end{proof}

The second step is to notice that, if $\mu^\tau_{k}$ and $\mu^\tau_{k+1}$ are close, then $f_{\mu^\tau_{k+1}, \mu^\tau_k}$ and $f_{\mu^\tau_{k+1}}$ should be close too, and their difference should be approximatively linear in $\mu^\tau_{k+1} - \mu^\tau_{k}$. This can be obtained by linearizing the Schrödinger system~\eqref{eq:schrödingersystem}. The linearization was already performed in several in different contexts \cite{carlier2020differential,carlier2024displacement,Goldfeld2022,GonzalezSanz2022,RGSD}, we report here the result of \cite{RGSD}, see also \cref{prop:dfts/ds} below. The expression of the derivatives of the Schrödinger potentials involves the following operators \cite[Definition 3.3]{RGSD}: denoting the self transport kernel of \(\mu\in\mcal{P(X)}\) by \(k_\mu \coloneqq \exp\brac{\frac1\varepsilon\brac{f_\mu \oplus f_\mu - c}}\),
the operators \(H_\mu:\MX\to\CX\) and \(K_\mu:\CX \to \CX\) are defined as
\begin{align}
\forall \sigma \in \MX, \: H_\mu\sqbrac{\sigma}&:y\mapsto \ \int_{\mcal X}k_\mu(x, y)\dd\sigma(x), \label{eq:def:H_mu}\\
\forall \phi \in \CX, \: K_\mu[\phi] \coloneqq H_\mu\sqbrac{\phi\mu}&:y\mapsto \int_{\mcal X}k_\mu(x,y)\phi(x)\dd\mu(x). \label{eq:def:K_mu}
\end{align}
With these notations, \cite[Proposition 3.14]{RGSD} yields that, in $\CX / \R$,  
\begin{equation}
\label{eq:approx_dfts/ds}
\frac{f_{\mu^\tau_{k+1} \mu^\tau_k} - f_{\mu^\tau_{k+1}}}{\tau}  \simeq  \varepsilon (\id - K_{\mu^\tau_{k+1}}^2)\inv H_{\mu^\tau_{k+1}} \left[ \frac{\mu^\tau_{k+1} - \mu^\tau_k}{\tau} \right]. 
\end{equation}
To derive the equation of the limiting flow we assume that $(\mu^\tau_k)_{k \geq 0}$ converges to a differentiable limit curve $(\mu_t)_{t \geq 0}$ as $\tau \to 0$, in the sense that $\mu^\tau_k \simeq \mu_{k \tau}$ but also
$(\mu^\tau_{k+1} - \mu^\tau_k)/\tau \simeq  \left. \frac{\partial \mu}{\partial t} \right|_{t = k \tau} =: \dot{\mu}_{k \tau}$.
Passing to the limit $\tau \to 0$ in~\eqref{eq:optimality_SJKO} with the help of~\eqref{eq:approx_dfts/ds}, we obtain that the limit curve $(\mu_t)_{t \geq 0}$ should satisfy: for every $t \geq 0$, there exists $p_t \in \CX$ such that $p_t \leq 0$ and $p_t = 0$ on $\supp{\mu_t}$, and a constant $C_t \in \R$ such that 
\begin{equation}
\label{eq:Sinflow_formal}
\frac{\varepsilon}{2} (\id - K_{\mu_t}^2)\inv H_{\mu_t}[\dot{\mu}_t] + V + p_t = C_t.
\end{equation}
As \(K_\mu[1] = 1\), the operator \((\id-K_{\mu_t}^2)\inv\) is well defined as a bounded operator only on \(\CX/\R\) \cite[Theorem 3.9]{RGSD}. Thus~\eqref{eq:Sinflow_formal} should be understood as an equality in $\CX / \R$.
We call~\eqref{eq:Sinflow_formal} the equation of the \emph{Sinkhorn potential flow}, and in this work we prove the well-posedness of this equation (see \cref{def:sinflow} and \cref{thm:main} below). To give a rigorous meaning to this equation we need to specify the regularity of the curve $(\mu_t)_{t \geq 0}$, in particular the functional space to which $\dot{\mu}_t$ belongs. This will be only possible after a review of the tools of Section~\ref{section:prelim}.   

\begin{remark}[The flow looks unstable]
\label{rm:ill_posed}
Let us pause to take a look at~\eqref{eq:Sinflow_formal}. It is an evolution equation where, a priori, $\dot{\mu}_t$ can be inferred with the knowledge of $\mu_t$. Forgetting about constant functions as $(\id - K_{\mu_t}^2)$ cancels them, we have
\begin{equation*}
\dot{\mu}_t = - \frac{2}{\varepsilon} H_{\mu_t}^{-1}[  (\id - K_{\mu_t}^2)(V+p_t)]
\end{equation*}
The operator $H_{\mu_t}$ is a kernel integral operator, it is even the convolution with a Gaussian kernel in the case $f_{\mu_t} = 0$ and $c$ is the quadratic cost. Thus~\eqref{eq:Sinflow_formal} involves taking the inverse of an kernel integral operator! It is non-local, and, more frightening, a priori very unstable in the sense that $H_{\mu_t}^{-1}$ is a bounded operator only if we endow its domain with a very strong norm and its co-domain with a very weak one. 
It is therefore surprising that we still manage to deduce existence and stability in \cref{thm:main}. 
\end{remark}

\begin{remark}[Other energies $E$]
\label{rm:other_E}
At least formally, it is not difficult to see that if $E$ is not a potential energy, the equation of the flow is obtained by substituting $\frac{\delta E}{\delta \mu}$ (as defined in \cite[Chapter 7]{otam}) in place of $V$ in~\eqref{eq:Sinflow_formal}. Moreover, the pressure term $p$ vanishes if $E$ is for instance the differential entropy as automatically $\mu^\tau_{k+1}$ has full support \cite[Lemma 8.6]{otam}. However, proving existence of a solution to the limit flow for other energies than potential ones is a completely open question due to the analytical difficulties mentioned in \cref{rm:ill_posed}. As the reader will see, our proof method really utilizes that $E$ is a potential energy.  
\end{remark}

Though the optimality conditions~\eqref{eq:optimality_SJKO} were derived rigorously, passing to the limit $\tau \to 0$ to obtain~\eqref{eq:Sinflow_formal} is more challenging. One difficulty is to make the approximation~\eqref{eq:approx_dfts/ds} uniformly in $\tau$. We will only be able to rigorously justify the limit when $\X$ is a finite space, see Section~\ref{section:tauto0}.

\subsection{Link with the geometry of Sinkhorn divergences}

The work~\cite{RGSD} introduces a new geometry on the space of probability distributions. The starting point of the investigation is that, even though $S_\varepsilon$ is not a squared distance, it behaves asymptotically as a squared distance close to the diagonal $\mu = \nu$. Specifically, if one takes $t\mapsto \mu_t \coloneqq \mu + t\Dot \mu$ a vertical perturbation, where \(\mu \in \PX\), \(\Dot\mu \in \MoX\) and so that $\mu_t \in \PX$ at least for $t$ in a neighborhood of $t=0$, then \cite[Theorem 3.4]{RGSD} reads
\begin{equation}\label{eq:sinkhornhessian}
\frac1{t^2}S_\varepsilon(\mu, \mu_t) \xrightarrow[t\to0]{}\frac\varepsilon2\lrangle{\Dot \mu, (\id-K_\mu^2)\inv H_\mu\sqbrac{\Dot \mu}}.
\end{equation}

Following this computation, the metric tensor $\mathbf{g}_\mu(\cdot,\cdot)$ is defined \cite[Definition 4.1]{RGSD} as the right hand side of the expansion~\eqref{eq:sinkhornhessian}: for \(\mu\in\mcal{P(X)}\) and \(\Dot \mu_1, \Dot\mu_2 \in \MoX\):
\begin{equation}\label{eq:g_mu}
\mathbf g_\mu(\Dot \mu_1, \Dot \mu_2) \coloneqq \frac\varepsilon2 \lrangle{\Dot \mu_1, (\id - K_\mu^2)\inv H_\mu\sqbrac{\Dot \mu_2}}
\end{equation}
This is clearly a quadratic form on $\MoX$, actually a positive definite one~\cite[Theorem 4.6]{RGSD}: this can be seen as a consequence of~\eqref{eq:sinkhornhessian} and the non-negativity of $S_\varepsilon$ (\cref{thm:feydythm1}).

As in the limit $\tau \to 0$ we have $\mu^\tau_{k+1} \simeq \mu^\tau_k$,
we expect the scheme~\eqref{eq:SJKO} to look like the scheme where we substitute $S_\varepsilon$ by its approximation~\eqref{eq:sinkhornhessian}. That is, the formal equation~\eqref{eq:Sinflow_formal} should be the same as the gradient flow of the energy $E$, but in the geometry of $\PX$ given by the metric tensor $\mathbf{g}_\mu$. One way to formally check that we have the right limit in~\eqref{eq:Sinflow_formal} is to multiply this equation by $\sigma \in \mathcal{M}_0(\X)$ a measure with zero mass, specifically $\sigma = \nu - \mu_t$. We obtain:
\begin{equation}
\label{eq:gradient_flow_formal}
\forall \nu \in  \PX, \quad \mathbf{g}_{\mu_t}(\sigma,\dot{\mu}_t) + DE(\mu_t)(\sigma) \geq 0, \quad \text{with } \sigma = \nu - \mu_t
\end{equation} 
where $DE(\mu_t)(\sigma)$ is the directional derivative of $E$ in the direction $\sigma$, that is, along the curve $s \mapsto \mu_t + s \sigma$. Considering that $(\nu - \mu_t)$ should generate a dense subspace of the space of admissible directions at $\mu_t$, this is indeed the weak formulation of a (constrained) gradient flow in a Riemannian manifold as explained in the following remark. 

\begin{remark}[Weak formulation of a constrained gradient flow on a Riemannian manifold]
\label{rmk:GF_M}
In a smooth Riemannian manifold $M$ with a metric tensor $(g_y(\cdot,\cdot))_{y \in M}$, the equation of a gradient flow is $\dot{y}_t = - \nabla E (y_t)$. Here $\nabla E(y)$ is the gradient of $E$ at $y \in M$: it belongs to the tangent space $T_y M$, and is characterized by $DE(y)(z) = g_y(z,\nabla E(y))$ for any $z \in T_y M$, with $DE(y)(z)$ the differential of $E$ in the direction $z$. Thus taking the dot product with $z$ in the evolution equation yields the following weak characterization of a Riemannian gradient flow \cite[Equation (4)]{ottoporousmedium}:
\begin{equation*}
\forall z \in T_{y_t} M, \quad g_{y_t}(z,\dot{y}_t) + DE(y_t)(z) = 0.  
\end{equation*} 
Next we look at a constrained gradient flow, where for each $y \in M$ we can only move in the directions $N_y \subseteq T_y M$, with $N_y$ a convex cone in the tangent space. 
In this case the equation of the flow should be $\dot{y}_t = \Pi_{N_{y_t}}( - \nabla E (y_t))$, where $\Pi_{N_y}$ is the orthogonal projection on $N_y$ in the Hilbert space $(T_y M, g_y(\cdot,\cdot))$. As $N_y$ is a convex cone, the projection is characterized by: for any $a \in T_yM$ and $z \in N_y$, $g_y(z, a - \Pi_{N_y}(a)) \leq 0$. Using $a = - \nabla E(y)$, we obtain the following weak characterization:
\begin{equation}
\label{eq:GF_R_projected}
\forall z \in N_{y_t} , \quad g_{y_t}(\dot{y}_t,z) + DE(y_t)(z) \geq 0.  
\end{equation}  
We indeed recover~\eqref{eq:gradient_flow_formal} if the cone of admissible directions at $\mu_t$ is made by measures $\sigma = \nu - \mu_t$, with $\nu \in \PX$. It is important to restrict the space of admissible directions for an evolution on the ``manifold'' of probability distributions, as one must ensure not to leave the positivity constraint: the space $\PX$ can be thought as a manifold ``with corners''. The variable $p_t$ in~\eqref{eq:Sinflow_formal} is interpreted as the Lagrange multiplier enforcing the positivity constraint.  
\end{remark} 

\begin{remark}[Link with the metric theory]
In the study of gradient flows on metric spaces, the limiting equation is usually characterized by purely metric formulations such as the ``Energy Dissipation Equality'' or the ``Evolution Variational Inequality'' \cite{AGS}. Here we rather exploit the specific structure of the space $(\PX;\mathbf{g}_\mu(\cdot,\cdot))$ we have at hand, as the evolution is defined as a more concrete differential inclusion in $\CX / \R$. 

Moreover, the usual analysis of gradient flows in metric spaces relies on an assumption of geodesic convexity of the functionals involved \cite{AGS}. However in our setting, as proved in \cite[Corollary 5.18]{RGSD}, when $c(x,y) = \| x-y \|^2$ is the quadratic cost the curve $t \mapsto \delta_{(1-t) x_0+t x_1}$ is a constant-speed geodesic between $\delta_{x_0}$ and $\delta_{x_1}$. Thus, if $E$ were to be geodesically convex over $\PX$ for the geometry induced by $\mathbf{g}_{\mu}(\cdot,\cdot)$, it would imply that $V$ is convex over $\X$ in the usual sense. In contrast, our theory only requires $V$ to be continuous, not even semi-convex: our approach does not rely on the classical tools of the analysis of gradient flows in metric spaces but really utilizes the specific structure of Sinkhorn divergences, leading to stronger results with weaker assumptions.
% We develop our analysis leveraging intensively the analytical structure of the geometry of Sinkhorn divergence investigated in \cite{RGSD}, without relying on the classical tools of gradient flows in metric spaces. 
\end{remark}

\section{Preliminaries on the geometry of Sinkhorn divergences}\label{section:prelim}

As hinted above, a key to analyze~\eqref{eq:Sinflow_formal} is the geometry of Sinkhorn divergences, characterized by the metric tensor $\mathbf{g}_\mu$ defined in~\eqref{eq:g_mu}, so far defined as a quadratic form on $\MoX$. In this section we highlight some key results of \cite{RGSD}, in particular the functional spaces to work with. It will allow us to select the right meaning to give for a solution of~\eqref{eq:Sinflow_formal}, as well as suggest a change of variables to help proving the well-posedness of the equation. 

Up to this point we have only introduced $\CX$ and $\MX$, respectively the spaces of functions and measures on $\X$. The operators $H_\mu$ (resp. $K_\mu$) defined in~\eqref{eq:def:H_mu} (resp.~\eqref{eq:def:K_mu}) are so far defined from $\MX$ (resp. $\CX$) into $\CX$. 

\begin{proposition}[{\cite[Proposition 3.6 and Theorem 3.9]{RGSD}}]
\label{prop:ipmK_bijection}
For any $\mu \in \PX$, the operators $H_\mu : \MX \to \CX$ and $K_\mu : \CX \to \CX$ are compact. Moreover, the operator $(\id + K_\mu)$ (resp. $(\id - K_\mu)$) is a continuous linear bijection of $\mcal C(\X)$ (resp. $\mcal C(\X) / \R$). 
\end{proposition}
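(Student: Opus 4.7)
The plan is to split the statement into a compactness part and a bijectivity part, the latter being reduced to injectivity via the Fredholm alternative.

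For compactness, I would first handle $H_\mu$. Since $f_\mu \in \CX$ and $c \in \mcal C(\X \times \X)$, the kernel $k_\mu = \exp\brac{\frac{1}{\varepsilon}(f_\mu \oplus f_\mu - c)}$ is continuous on the compact product $\X \times \X$, hence uniformly continuous and bounded. For any bounded family $B \subset \MX$ in total variation norm, the image $\{H_\mu[\sigma] : \sigma \in B\}$ is uniformly bounded by $\snorm{k_\mu}_\infty \sup_{\sigma \in B}\snorm{\sigma}_{TV}$ and is equicontinuous thanks to the estimate
\begin{equation*}
\abs{H_\mu[\sigma](y) - H_\mu[\sigma](y')} \leq \snorm{\sigma}_{TV}\, \sup_{x \in \X}\abs{k_\mu(x, y) - k_\mu(x, y')}.
\end{equation*}
Arzelà-Ascoli then yields relative compactness in $\CX$, so $H_\mu : \MX \to \CX$ is compact. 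Since the multiplication map $\phi \mapsto \phi\mu$ is bounded from $\CX$ into $\MX$ (as $\mu \in \PX$ has unit mass), the factorization $K_\mu = H_\mu \circ (\phi \mapsto \phi\mu)$ shows $K_\mu$ is compact too.

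For the bijectivity claims, I would invoke the Fredholm alternative applied to $K_\mu$ on the Banach space $\CX$: the bijectivity of $\id + K_\mu$ on $\CX$ reduces to its injectivity, while the bijectivity of $\id - K_\mu$ on $\CX / \R$ is equivalent to the kernel identity $\ker_{\CX}(\id - K_\mu) = \R \cdot 1$. The decisive input for both is the diagonal Schrödinger relation $f_\mu = T_\varepsilon(f_\mu, \mu)$, which unfolds to $\int k_\mu(x, y)\dd\mu(x) = 1$ for every $y \in \X$; equivalently, $K_\mu[1] = 1$ and, for each fixed $y$, $x \mapsto k_\mu(x,y)$ is a strictly positive probability density against $\mu$.

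I would close both injectivity claims by a maximum principle built on this averaging identity. If $\phi = K_\mu\phi$, then $\phi(y) = \int k_\mu(x,y)\phi(x)\dd\mu(x)$ is a $\mu$-average of $\phi$ weighted by a strictly positive kernel; picking $y^\star \in \supp{\mu}$ at which $\phi$ attains its maximum on $\supp{\mu}$ forces $\phi$ to be constant on $\supp{\mu}$, and then the same averaging identity evaluated at arbitrary $y \in \X$ propagates that constant to all of $\X$. For $\id + K_\mu$, if $\phi = -K_\mu\phi$ then $\snorm{\phi}_\infty \leq \snorm{K_\mu\phi}_\infty \leq \snorm{\phi}_\infty$ with equality throughout; an analogous extremal argument forces $\phi$ to be constant, and the relation $\phi = -K_\mu\phi = -\phi$ then yields $\phi = 0$. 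The main obstacle I anticipate is making the step "constant on $\supp{\mu}$ $\Rightarrow$ constant on $\X$" truly airtight when $\mu$ has thin support, but strict positivity of $k_\mu$ on $\X\times\X$ combined with the fact that the averaging identity can be tested against every $y \in \X$ dispatches it cleanly.
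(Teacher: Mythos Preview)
Your argument is correct. The paper does not prove this proposition itself but cites \cite[Proposition 3.6 and Theorem 3.9]{RGSD}; from the hints scattered later in the paper (the proof of \cref{prop:K_munu} and the spectral statement in \cref{prop:analytic_HK_mu}), the cited approach differs from yours in the injectivity step: rather than a maximum principle, \cite{RGSD} establishes the quantitative contraction $\|K_\mu[\phi]\|_{\CX/\R} \leq q\,\|\phi\|_{\CX/\R}$ with $q = 1 - e^{-4\|c\|_\infty/\varepsilon} < 1$, from which injectivity of $\id - K_\mu$ on $\CX/\R$ is immediate and injectivity of $\id + K_\mu$ on $\CX$ follows by first passing to the quotient. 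Your maximum-principle route is arguably more elementary and self-contained, but it does not deliver the explicit spectral gap, which the paper actually relies on elsewhere (e.g.\ in the uniform bound \cref{eq:equivalent_metric_tensor_flat} and in \cref{prop:K_munu}).

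One small point to tighten: in your treatment of $\id + K_\mu$, the phrase ``an analogous extremal argument forces $\phi$ to be constant'' skips a beat. The cleanest way to make it precise is to note that $\phi = -K_\mu\phi$ implies $\phi = K_\mu^2\phi$, and $K_\mu^2$ is again an integral operator with strictly positive kernel integrating to $1$ against $\mu$, so your first maximum-principle argument applies verbatim to conclude $\phi$ is constant; then $\phi = -\phi$ gives $\phi = 0$.
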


However the pair $(\CX, \MX)$ is not enough to understand the properties of the geometry of Sinkhorn divergences. 
To push the analysis further we need the RKHSs built on the kernels $k_c = \exp(-c/\varepsilon)$ and $k_\mu = \exp((f_\mu \oplus f_\mu -c ) / \varepsilon)$.

\paragraph{Reproducing kernel Hilbert spaces}

Recall that a positive definite kernel $k : \X \times \X \to \R$ defines a unique RKHS $\mcal H_k$, as a subset of the space of functions from $\X$ to $\R$. It is the completion of $\mathrm{span} \{ k(x,\cdot) \ : x \in \X \}$ equipped with the norm induced by the dot product $\lrangle{k(x,\cdot), k(y,\cdot)}_{\mcal H_k} = k(x,y)$. If $k$ is continuous, which will always be our case, then $\mcal H_k \subseteq \CX$. We say that $k$ is universal if $\mcal H_k$ is dense in $\CX$. We refer to Appendix~\ref{appendix:RKHS} for more details.

If $\sigma$ is a linear form on $\mcal H_k$, denoted by $\sigma \in \mcal H^*_k$, we write $H_k[\sigma] \in \mcal H_k$ for its Riesz representative, that is, such that, for $\phi \in \mcal H_k$, 
\begin{equation*}
\lrangle{\sigma,\phi} = \lrangle{H_k[\sigma], \phi}_{\mcal H_k}.
\end{equation*}
We will only use the kernel $k_c = \exp(-c/\varepsilon)$ giving rise to the RKHS $\mcal H_c$; and for every  $\mu \in \PX$, the kernel $k_\mu = \exp( (f_\mu \oplus f_\mu - c) / \varepsilon)$, which gives rise to the RKHS $\mcal H_\mu$.
Even though the notation does not emphasize it, both these RKHSs depend on $\varepsilon$.
As $k_c$ and $k_\mu$ are continuous, $\mcal H_c, \mcal H_\mu \subseteq \CX$ and thus $\MX \subseteq \mcal H^*_c, \mcal H^*_\mu$. We use the shortcut $H_\mu$ instead of $H_{k_\mu}$, and this is consistent with~\eqref{eq:def:K_mu}: it can be checked that $H_\mu[\sigma]$, seen as the Riesz representation operator from $\mcal H_\mu^* \to \mcal H_\mu$ applied to $\sigma$, coincides with the expression in~\eqref{eq:def:K_mu} if $\sigma \in \MX$. On the other hand we use $H_c$ as a shortcut for $H_{k_c}$: at least if $\sigma \in \MX$
\begin{equation*}
H_c[\sigma](y) = \int \exp\left( - \frac{c(x,y)}{\varepsilon} \right) \dd \sigma(x). 
\end{equation*}

\begin{proposition}[{\cite[Proposition 4.2]{RGSD}}]
\label{prop:analytic_HK_mu}
For $\mu \in \PX$, the operators $H_c$ and $H_\mu$ are compact when restricted to $\MX$ and valued in $\mcal H_c$, $\mcal H_\mu$ respectively. 

For $\mu \in \PX$, the operator $K_\mu$ is non-negative, self-adjoint and compact, both as an operator from $L^2(\X, \mu)$ to $L^2(\X, \mu)$ and an operator from $\mcal H_\mu$ to $\mcal H_\mu$. The eigenvalue $1$ has multiplicity $1$, with eigenvectors given by constant functions, and all the other eigenvalues are in $[0,1-\exp(-4\| c \|_\infty/\varepsilon)]$. 
\end{proposition}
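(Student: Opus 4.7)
The proof splits into two parts: compactness of $H_c$ and $H_\mu$, and the spectral structure of $K_\mu$. For $H_c$ (the argument for $H_\mu$ is identical with $k_c$ replaced by $k_\mu$), the strategy is Banach--Alaoglu combined with a weak-$*$-to-norm continuity step. Since $\CX$ is separable ($\X$ being compact metric), the closed total-variation unit ball of $\MX$ is weak-$*$ compact and metrizable, so it is enough to prove that $\tau_n \rightharpoonup^* 0$ in $\MX$ implies $H_c[\tau_n] \to 0$ in $\mcal H_c$. The reproducing identity yields
\begin{equation*}
\norm{H_c[\tau_n]}_{\mcal H_c}^2 = \lrangle{\tau_n, H_c[\tau_n]} \leq \norm{\tau_n}_{\mathrm{TV}} \cdot \norm{H_c[\tau_n]}_\infty,
\end{equation*}
and the family $\cbrac{k_c(\cdot,y) : y \in \X}$ is relatively compact in $\CX$ by Arzelà--Ascoli (uniform continuity of $k_c$ on $\X \times \X$); weak-$*$ convergence of $\tau_n$ is uniform on this compact set, so $\norm{H_c[\tau_n]}_\infty \to 0$, which concludes.

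For $K_\mu$, the crucial reduction is a factorization through the canonical inclusion $\Phi: \mcal H_\mu \hookrightarrow L^2(\X,\mu)$ (well-defined since $\mcal H_\mu \subseteq \CX$ and $\mu$ is a finite measure). A reproducing-kernel computation yields the adjoint $\Phi^* \psi = \int \psi(x)\, k_\mu(x,\cdot)\,\dd\mu(x)$, and one verifies $K_\mu = \Phi^* \Phi$ on $\mcal H_\mu$ while $K_\mu = \Phi \Phi^*$ on $L^2(\X,\mu)$. Self-adjointness and non-negativity follow at once on both spaces, and their non-zero spectra coincide. For compactness: on $L^2(\mu)$, the kernel $k_\mu$ is continuous on the compact product, hence $K_\mu$ is Hilbert--Schmidt; on $\mcal H_\mu$, the reproducing inequality together with uniform continuity of $k_\mu$ shows that the unit ball of $\mcal H_\mu$ is equicontinuous and uniformly bounded in $\CX$, so $\Phi$ is compact by Arzelà--Ascoli, and therefore so is $K_\mu = \Phi^* \Phi$.

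For the spectrum, $K_\mu[1] = 1$ follows directly from the Schrödinger fixed-point equation, and $1 \in \mcal H_\mu$ because $1 = \int k_\mu(x,\cdot)\,\dd\mu(x)$ is the Riesz representative of $\mu$. For simplicity, if $K_\mu\phi = \phi$ then (replacing $\phi$ by $K_\mu\phi$ if needed) $\phi$ is continuous, and at a maximizer $y_0$ the identity $\max \phi = \int k_\mu(x,y_0)\,\phi(x)\,\dd\mu(x)$, combined with $k_\mu > 0$ and $\int k_\mu(\cdot,y_0)\,\dd\mu = 1$, forces $\phi \equiv \max\phi$ on $\supp \mu$; a further application of $K_\mu$ extends the equality to all of $\X$. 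The spectral gap follows from a Doeblin-type minorization: $L^\infty$ estimates on the Schrödinger potential $f_\mu$ derived from its fixed-point equation and the bounds on $c$ yield a uniform pointwise lower bound $k_\mu(x,y) \geq \delta := \exp(-4\norm{c}_\infty/\varepsilon)$. Decomposing the Markov operator as $K_\mu = \delta E + (1-\delta) R$, with $E\phi = \int \phi\, \dd\mu$ the projection onto constants and $R$ again a symmetric Markov operator, one has $E = 0$ on the $L^2(\mu)$-orthogonal of constants while $\norm{R}_{L^2\to L^2} \leq 1$ (self-adjoint, positivity-preserving Markov), so $\norm{K_\mu} \leq 1-\delta$ on that subspace; the factorization above transports the bound to $\mcal H_\mu$.

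The main technical obstacle is the Doeblin step: the uniform lower bound on $k_\mu$ requires a non-trivial $L^\infty$ control of $f_\mu$ extracted from the Schrödinger fixed-point equation, and the precise constant $4$ in the exponent depends on how tightly one estimates $\norm{f_\mu}_\infty$. The remaining ingredients---factorization, self-adjointness, compactness, and simplicity of the top eigenvalue---are standard RKHS and integral-operator material once the identity $K_\mu = \Phi^*\Phi$ is written down.
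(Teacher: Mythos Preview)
Your argument is correct. The paper does not prove this proposition itself---it is quoted verbatim from \cite[Proposition~4.2]{RGSD}---so there is no in-paper proof to compare against line by line. From what the paper reveals elsewhere (see the proof of \cref{prop:K_munu}), the route taken in \cite{RGSD} for the spectral gap is the oscillation-norm contraction $\|K_\mu[\phi]\|_{\CX/\R} \leq (1-e^{-4\|c\|_\infty/\varepsilon})\|\phi\|_{\CX/\R}$, after which nonconstant $L^2$-eigenfunctions are bootstrapped to $\CX$ to bound their eigenvalues. Your Doeblin minorization $k_\mu \geq \delta$ and the decomposition $K_\mu = \delta E + (1-\delta)R$ is the dual side of the same Birkhoff--Hopf contraction mechanism; it has the mild advantage of working directly in $L^2(\mu)$ without the detour through $\CX/\R$, and the $L^\infty$ bounds on $f_\mu$ needed for the constant $\delta=e^{-4\|c\|_\infty/\varepsilon}$ follow easily from the fixed-point equation (in fact a slightly sharper constant is available). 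The factorization $K_\mu = \Phi^*\Phi$ on $\mcal H_\mu$ and $K_\mu=\Phi\Phi^*$ on $L^2(\mu)$ is the standard RKHS device for identifying the two nonzero spectra, and your compactness arguments (weak-$*$-to-norm continuity for $H_c$, Hilbert--Schmidt and Arzel\`a--Ascoli for $K_\mu$) are the expected ones.
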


\paragraph{The tangent space as a dual of a RKHS}

Considering that the computations yielding \eqref{eq:sinkhornhessian} were only carried out in the case of vertical perturbations for simplicity, the space \(\MoX\) is expected to be only a subset of the tangent space. To identify a natural candidate, \cite{RGSD} considers the completion of \(\MoX\) with respect to the metric tensor \(\mathbf g_\mu\). The result happens to be a subset of the dual of a RKHS.  

From the identity $\mathbf g_\mu(\Dot \mu_1, \Dot \mu_2) = \frac{\varepsilon}{2} \lrangle{H_\mu[\Dot \mu_1], (\id - K_\mu^2)\inv H_\mu[\Dot \mu_2]}_{\mcal H_\mu}$ and Proposition~\ref{prop:analytic_HK_mu}, the work \cite{RGSD} proves that the completion of $\MoX$ with respect to $\mathbf{g}_\mu$ is the subset 
\begin{equation*}
\tgtspace{\mu} \coloneqq \cbrac{\sigma \in \mcal H_\mu^* \ : \ \lrangle{\sigma, 1} = 0},
\end{equation*}
and that $\mathbf{g}_\mu(\Dot \mu, \Dot \mu)$ is equivalent to the dual norm of $\Dot \mu$ in $\mcal H_\mu$, equivalently to $\| H_\mu[ \Dot \mu] \|^2_{\mcal H_\mu}$. The constraint \(\lrangle{\sigma, 1} = 0\) in the definition of \(\tgtspace{\mu}\) comes from the conservation of mass.

\begin{remark}
\label{rm:vertical_horizontal}
In particular $\mcal M_0(\X) \subseteq \tgtspace{\mu}$, which means all ``vertical perturbations'', that is, all curves $t \mapsto (1-t) \mu + t \nu$ with $\mu, \nu \in \PX$, have finite speed when measured with $\mathbf{g}_\mu(\cdot,\cdot)$. If in addition $\X \subset \R^d$ and $c$ is the quadratic cost, then $\tgtspace{\mu}$ contains not only vertical but also ``horizontal perturbations'' \(\Dot \mu = \div(\mu v)\) for a vector field \(v\in L^2(\X, \mu; \R^d)\). The latter horizontal perturbations are the only ones admissible in the geometry of optimal transport \cite[Chapter 8]{AGS}. Thus, the geometry of Sinkhorn divergences allows for more ways to move on $\PX$. 
\end{remark}

\begin{remark}[Regularity of the metric tensor and of the gradient flow]
The metric tensor $\mathbf{g}_{\mu}(\cdot,\cdot)$ is very regularizing, in the sense that it is finite for a large class of perturbations: all the ones such that $\dot{\mu} \in \tgtspace{\mu}$. However, the gradient flow involves the inverse of the metric tensor, which has in this case a de-regularizing effect, thus the limiting equation appears to be unstable (see \cref{rm:ill_posed}). A similar phenomenon has been discussed in \cite[Section 7]{park2025geometry} regarding gradient flows in the geometry of sliced optimal transport.
\end{remark}

\paragraph{A key change of variables}

The space of distributions \(\mcal H_{\mu}^*\) containing the tangent space is unfortunately dependent on \(\mu\), motivating \cite{RGSD}, inspired by \cite{feydy19,séjournéunbalanced}, to perform a change of variables in order to identify $\PX$ as a submanifold of the fixed Hilbert space $\mcal H_c$. The authors consider the mapping $ B : \PX \to \mcal H_c$ defined as 
\begin{equation}\label{eq:Bdef}
B(\mu) \coloneqq \exp\brac{-\frac{f_\mu}{\varepsilon}} =  H_c \left[ \exp\brac{\frac{f_\mu}{\varepsilon}} \mu \right],
\end{equation} 
where the last identity comes from the Schrödinger system \eqref{eq:schrödingersystem}, and shows that $B$ is valued in $\mcal H_c$.

\begin{theorem}[{\cite[Theorem 4.8]{RGSD}}]
\label{thm:embeddingB}
The map $B$ is an homeomorphism between $\PX$ and the set
\begin{equation*}
\mcal B \coloneqq H_c\sqbrac{\MpX} \cap \cbrac{b \in \mcal H_c \ : \ \norm{b}_{\mcal H_c} = 1},
\end{equation*} 
that is, the intersection between the convex cone $H_c[\MpX]$ and the unit sphere of $\mcal H_c$. Moreover, the set $\mcal B \subset \mcal H_c$ is compact, and strong and weak convergence coincide on $\mcal B$.
\end{theorem}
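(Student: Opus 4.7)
The plan is to verify in sequence that $B$ lands in $\mcal B$, that $B$ is continuous, injective, and surjective onto $\mcal B$, and then to deduce the topological claims from compactness of $\PX$. For the inclusion $B(\PX) \subseteq \mcal B$, the second expression $B(\mu) = H_c[\exp(f_\mu/\varepsilon)\mu]$ already puts $B(\mu)$ in $H_c[\MpX]$, while the norm computation
\begin{equation*}
\|B(\mu)\|_{\mcal H_c}^2 = \lrangle{H_c[\exp(f_\mu/\varepsilon)\mu],\exp(f_\mu/\varepsilon)\mu} = \iint_{\X\times\X} \exp\!\brac{\tfrac{1}{\varepsilon}\brac{f_\mu(x)+f_\mu(y)-c(x,y)}}\dd\mu(x)\dd\mu(y)
\end{equation*}
recognizes the right-hand side as the total mass of the optimal entropic self-coupling $k_\mu \cdot (\mu\otimes\mu)$, which equals $\mu(\X) = 1$. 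Continuity of $B$ then follows from the continuity of $\mu \mapsto f_\mu$ from $(\PX,\text{weak-}\txtstar)$ into $(\CX,\|\cdot\|_\infty)$, a standard stability result for Schrödinger potentials, combined with the continuity of $H_c : \MX \to \mcal H_c$ given by \cref{prop:analytic_HK_mu}.

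The algebraic heart is injectivity and surjectivity. If $B(\mu) = B(\nu)$, taking logarithms yields $f_\mu = f_\nu$; the second form of $B$ then gives $H_c[\exp(f_\mu/\varepsilon)(\mu-\nu)] = 0$, and universality of $k_c$ makes $H_c$ injective on $\MX$ (since $\mcal H_c$ is dense in $\CX$), forcing $\mu = \nu$. For surjectivity, given $b \in \mcal B$, the same injectivity furnishes a unique $\sigma \in \MpX$ with $b = H_c[\sigma]$. Since $k_c > 0$, the function $b = H_c[\sigma]$ is strictly positive and continuous on $\X$, so I can define
\begin{equation*}
\mu \coloneqq b\,\sigma \in \MpX, \qquad f \coloneqq -\varepsilon \log b \in \CX.
\end{equation*}
The normalization $\mu(\X) = \int b \dd \sigma = \lrangle{H_c[\sigma],\sigma} = \|b\|_{\mcal H_c}^2 = 1$ certifies $\mu \in \PX$. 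Moreover $\exp(f/\varepsilon)\mu = (1/b)(b\sigma) = \sigma$, so
\begin{equation*}
H_c[\exp(f/\varepsilon)\mu] = H_c[\sigma] = b = \exp(-f/\varepsilon),
\end{equation*}
which is exactly the symmetric Schrödinger self-equation $f = T_\varepsilon(f,\mu)$. Uniqueness of the self-potential (the constant-shift freedom in \eqref{eq:schrödingersystem} is killed by the symmetry constraint $f = g$) identifies $f = f_\mu$, and therefore $B(\mu) = \exp(-f_\mu/\varepsilon) = b$.

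The topological conclusions are then mechanical. Since $\PX$ is compact for the weak-\txtstar{} topology and $B$ is a continuous bijection, any sequence with $B(\mu_n) \to B(\mu)$ strongly in $\mcal H_c$ has every weak-\txtstar{} subsequential limit of $(\mu_n)$ mapped by $B$ to $B(\mu)$, hence equal to $\mu$ by injectivity, so $\mu_n \to \mu$ weakly-\txtstar{}; this yields continuity of $B\inv$. Compactness of $\mcal B = B(\PX)$ in the strong topology of $\mcal H_c$ is then immediate as the continuous image of a compact set. For the coincidence of strong and weak convergence on $\mcal B$, if $b_n, b \in \mcal B$ with $b_n \rightharpoonup b$, the equality $\|b_n\|_{\mcal H_c} = \|b\|_{\mcal H_c} = 1$ gives $\|b_n - b\|_{\mcal H_c}^2 = 2 - 2\lrangle{b_n,b}_{\mcal H_c} \to 0$, while the reverse implication is automatic in any Hilbert space.

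The main obstacle I anticipate is the surjectivity step: the choice $\mu = b\sigma$ with $f = -\varepsilon\log b$ is the only nonobvious move, but once it is made the Schrödinger self-equation closes up by a clean cancellation. The other mildly delicate ingredient is the weak-\txtstar{} to uniform continuity of $\mu \mapsto f_\mu$, which I would invoke as a known fact about Schrödinger potentials rather than reprove from scratch.
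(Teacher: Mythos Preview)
The paper does not prove this theorem; it is quoted as \cite[Theorem 4.8]{RGSD} without argument. Your reconstruction is correct and follows what is essentially the natural route: the norm identity $\|B(\mu)\|_{\mcal H_c}^2 = \iint k_\mu \,\dd(\mu\otimes\mu) = 1$ via the marginal constraint, injectivity from $B(\mu)=\exp(-f_\mu/\varepsilon)$ plus injectivity of $H_c$ on $\MX$, and the surjectivity ansatz $\mu = b\,H_c^{-1}[b]$ (which is exactly the inverse formula \eqref{eq:expression_Binv} the paper records). One minor citation slip: the weak-\txtstar{}-to-norm continuity of $H_c:\MX\to\mcal H_c$ is \cref{lemma:Hkc0}, not \cref{prop:analytic_HK_mu} (which only asserts compactness); and the stability of $\mu\mapsto f_\mu$ you invoke is the result the paper later cites as \cite[Proposition 13]{feydy19}. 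With those references fixed, the argument is complete.
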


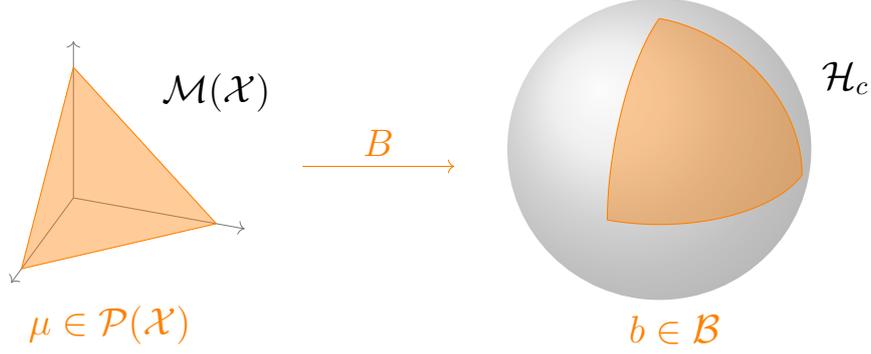
\begin{figure}[h]
\centering
\tdplotsetmaincoords{60}{110}
\begin{tikzpicture}[tdplot_main_coords, scale=2]
\draw[->, gray] (0, 0, 0) -- (1.2, 0, 0);
\draw[->, gray] (0, 0, 0) -- (0, 1.2, 0);
\draw[->, gray] (0, 0, 0) -- (0, 0, 1.2);
\fill[color=orange, opacity=.4] plot[variable=\x,domain=0:1] (\x, {1-\x}, 0)
-- plot[variable=\x,domain=0:1] (\x, 0, {1-\x})
-- plot[variable=\x,domain=0:1] (0, \x, {1-\x});
\draw[orange] plot[variable=\x,domain=0:1] (\x, {1-\x}, 0);
\draw[orange] plot[variable=\x,domain=0:1] (\x, 0, {1-\x});
\draw[orange] plot[variable=\x,domain=0:1] (0, \x, {1-\x});
\draw[orange] (1.5, .8, 0) node{\large \(\mu\in\PX\)};
\draw (0, 1, 1) node{\large \(\MX\)};
\end{tikzpicture}
\raisebox{24mm}{
\begin{tikzpicture}
\draw[orange, ->] (0, 0) -- (2, 0) node[midway, above, orange]{\large \(B\)};
\end{tikzpicture}
}
\tdplotsetmaincoords{60}{110}
\begin{tikzpicture}[tdplot_main_coords, scale=2]
\shade[tdplot_screen_coords, ball color = gray!40, opacity = 0.4] (0,0) circle (1);
\fill[color=orange, opacity=.4] plot[variable=\x,domain=0:90] (xyz spherical cs:radius=1,latitude=\x,longitude=0)
-- plot[variable=\x,domain=90:0] (xyz spherical cs:radius=1,latitude=\x,longitude=90)
-- plot[variable=\x,domain=90:0] (xyz spherical cs:radius=1,latitude=0,longitude=\x);
\draw[orange] plot[variable=\x,domain=0:90] (xyz spherical cs:radius=1,latitude=\x,longitude=0);
\draw[orange] plot[variable=\x,domain=90:0] (xyz spherical cs:radius=1,latitude=\x,longitude=90);
\draw[orange] plot[variable=\x,domain=90:0] (xyz spherical cs:radius=1,latitude=0,longitude=\x);
\draw[orange] (1.9, .8, 0) node[anchor=north]{\large\(b\in\mcal B\)};
\draw (0, 1.3, .8) node{\large \(\mcal H_c\)};
\end{tikzpicture}
\caption{Illustration of the change of variables.}
\end{figure}

\noindent The cone $H_c[\MpX]$ encodes the non-negativity constraint of the variable \(\mu\), and the unit sphere in \(\mcal H_c\) corresponds to the constraint of unit mass on \(\mu\).

With $b = B(\mu)$, in the sequel $b^{-1}$ denotes the function $x \mapsto 1/b(x)$ which is a continuous function as $b \in \CX$ and $b > 0$.
As~\eqref{eq:Bdef} reads $b = H_c[b^{-1} \mu]$ from~\eqref{eq:Bdef}, we obtain that the inverse of $B$, defined on $\mcal B$, is 
\begin{equation}
\label{eq:expression_Binv}
B \inv(b) = bH_c^{-1}[b]. 
\end{equation} 
In this expression $H_c^{-1}[b]$ is a measure while $b$ is a function, so that $B \inv (b)$ is indeed a measure.
In particular if $b = B(\mu)$ then we have 
\begin{align}
\label{eq:Hmu_in_b}
H_\mu[\sigma] & = e^{f_\mu/\varepsilon} H_c[ e^{f_\mu/\varepsilon}\sigma] = b^{-1} H_c[b^{-1} \sigma]. \\
\label{eq:Kmu_in_b}
K_\mu [\phi] & = e^{f_\mu/\varepsilon} H_c[ e^{f_\mu/\varepsilon} \phi  \mu] = b^{-1} H_c[\phi H_c^{-1}[b]] 
\end{align}
We write for the record the following easy result, that we will use in the computations below.

\begin{proposition}[{\cite[Proposition 4.7]{RGSD}}]
\label{prop:isometry_RKHS}
If $\mu \in \PX$ and $b = B(\mu)$, then $\phi \mapsto b^{-1} \phi$ is an isometry between $\mcal H_c$ and $\mcal H_{\mu}$.
\end{proposition}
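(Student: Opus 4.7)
The whole argument rests on the elementary identity relating the two kernels. Writing $b^{-1} = \exp(f_\mu/\varepsilon)$, from the definitions $k_c(x,y) = \exp(-c(x,y)/\varepsilon)$ and $k_\mu(x,y) = \exp((f_\mu(x)+f_\mu(y) - c(x,y))/\varepsilon)$, one immediately obtains
\begin{equation*}
k_\mu(x,y) = b^{-1}(x)\,b^{-1}(y)\,k_c(x,y).
\end{equation*}
Evaluating the section $b^{-1}\cdot k_c(x,\cdot)$ at $y$ therefore yields $b^{-1}(y) k_c(x,y) = b(x)\, k_\mu(x,y)$, i.e.\ multiplication by $b^{-1}$ sends $k_c(x,\cdot)$ to the scalar multiple $b(x)\, k_\mu(x,\cdot)$ of a kernel section of $\mcal H_\mu$.

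The plan is to first define a linear map $T$ on the dense subspace $\mathcal D := \mathrm{span}\{k_c(x,\cdot) : x\in\X\}\subset\mcal H_c$ by $T(k_c(x,\cdot)) := b(x)\, k_\mu(x,\cdot)$, and verify that it preserves inner products on kernel sections by a direct computation: using $\lrangle{k_c(x,\cdot),k_c(y,\cdot)}_{\mcal H_c}=k_c(x,y)$ and $\lrangle{k_\mu(x,\cdot),k_\mu(y,\cdot)}_{\mcal H_\mu} = k_\mu(x,y)$,
\begin{equation*}
\lrangle{T k_c(x,\cdot),T k_c(y,\cdot)}_{\mcal H_\mu} = b(x)b(y)\, k_\mu(x,y) = b(x)b(y)\, b^{-1}(x)b^{-1}(y)\, k_c(x,y) = k_c(x,y).
\end{equation*}
By bilinearity, $T$ is norm-preserving on $\mathcal D$, hence extends uniquely to an isometric linear map $\bar T:\mcal H_c\to\mcal H_\mu$. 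Since the image $T(\mathcal D)=\mathrm{span}\{k_\mu(x,\cdot):x\in\X\}$ is dense in $\mcal H_\mu$ and isometries have closed image, $\bar T$ is surjective and is a bijective isometry.

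It then remains to identify $\bar T \phi$ with the pointwise product $b^{-1}\phi$ for arbitrary $\phi \in \mcal H_c$. For any $z \in \X$, using the reproducing property in $\mcal H_\mu$ and that $\bar T$ is an isometry, hence $\bar T^{-1}$ acts on kernel sections by $\bar T^{-1}(k_\mu(z,\cdot)) = b^{-1}(z)\, k_c(z,\cdot)$, we compute
\begin{equation*}
(\bar T\phi)(z) \,=\, \lrangle{\bar T\phi, k_\mu(z,\cdot)}_{\mcal H_\mu} \,=\, \lrangle{\phi, b^{-1}(z)\,k_c(z,\cdot)}_{\mcal H_c} \,=\, b^{-1}(z)\,\phi(z),
\end{equation*}
so $\bar T\phi = b^{-1}\phi$ pointwise. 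In particular the pointwise multiplication $\phi\mapsto b^{-1}\phi$ sends $\mcal H_c$ into $\mcal H_\mu$ and realizes the isometry. There is essentially no obstacle in the argument beyond being careful to define $T$ first on the dense subspace $\mathcal D$ (where landing in $\mcal H_\mu$ is clear) and only \emph{a posteriori} identify the abstract extension with the concrete multiplication operator on all of $\mcal H_c$.
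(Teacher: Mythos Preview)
Your proof is correct. The paper does not give its own proof of this proposition---it cites it from \cite[Proposition 4.7]{RGSD}---so there is nothing to compare against directly. Your argument is the standard one: use the factorization $k_\mu(x,y) = b^{-1}(x)\,b^{-1}(y)\,k_c(x,y)$ to build an isometry on kernel sections, extend by density, and then identify the extension with pointwise multiplication via the reproducing property. This is almost certainly the argument in the cited reference as well, since the result is a special case of the general fact that if a kernel factors as $k'(x,y) = a(x)a(y)k(x,y)$ for a nonvanishing function $a$, then $\phi\mapsto a\phi$ is an isometry $\mcal H_k\to\mcal H_{k'}$.
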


The work \cite{RGSD} then computes the pullback of the metric tensor $\mathbf{g}_\mu$ by $B^{-1}$ onto $\mcal H_c$. The first step is to compute the differential of $B$. The following proposition ultimately boils down to computing the derivative of a Schrödinger potential along a curve, thus is related to the linearization of the Schrödinger system~\eqref{eq:schrödingersystem}. If a curve $(x_t)_t$ is valued in a linear space, we say that it is weakly differentiable if it is differentiable for the associated weak topology on this linear space.   

\begin{proposition}[{\cite[Lemma 4.13]{RGSD}}]
\label{prop:change_mudot_bdot}
Let \((\mu_t)_t\) be a curve valued in \(\PX\) and \((b_t)_t\coloneqq (B(\mu_t))_t\). If \((b_t)_t\) is weakly differentiable at \(t\) in \(\mcal H_c\), then \((\mu_t)_t\) is weakly-\txtstar{} differentiable at \(t\) in \(\mcal H_{\mu_t}^*\) and  
\begin{equation*}
\dot{b}_t = b_t (\id + K_{\mu_t})\inv H_{\mu_t}[\dot{\mu}_t].  
\end{equation*}
\end{proposition}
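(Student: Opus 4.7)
The plan is to differentiate in $t$ the identity $b_t=H_c[b_t^{-1}\mu_t]$ (which is~\eqref{eq:Bdef} combined with the Schrödinger system) and identify the two resulting terms via the rewriting formulas~\eqref{eq:Hmu_in_b} and~\eqref{eq:Kmu_in_b}. The claim then follows by inverting the bijection $(\id+K_{\mu_t})$ of $\CX$ provided by \cref{prop:ipmK_bijection}.

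\textbf{Formal computation.} The product and chain rule give, at least formally,
$$\dot{b}_t \;=\; H_c[b_t^{-1}\dot\mu_t] \;-\; H_c\bigl[b_t^{-2}\dot{b}_t\,\mu_t\bigr].$$
By~\eqref{eq:Hmu_in_b} the first term equals $b_t H_{\mu_t}[\dot\mu_t]$. For the second, using $H_c^{-1}[b_t]=b_t^{-1}\mu_t$ (a rewriting of~\eqref{eq:Bdef}) one factors $b_t^{-2}\dot{b}_t\,\mu_t=(b_t^{-1}\dot{b}_t)\,H_c^{-1}[b_t]$, and~\eqref{eq:Kmu_in_b} identifies this with $b_t K_{\mu_t}[b_t^{-1}\dot{b}_t]$. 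Dividing by $b_t>0$ yields $(\id+K_{\mu_t})[b_t^{-1}\dot{b}_t]=H_{\mu_t}[\dot\mu_t]$, and inverting $(\id+K_{\mu_t})$ delivers the announced formula.

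\textbf{Rigorous justification and main obstacle.} The weak differentiability of $b_t$ in $\mcal H_c$ bounds the difference quotients $(b_{t+h}-b_t)/h$ in $\mcal H_c$, hence $b_{t+h}\to b_t$ strongly in $\mcal H_c$ and therefore uniformly on $\X$ by the reproducing property; combined with $b_t\geq\delta>0$ (positivity on a compact set), also $b_{t+h}^{-1}\to b_t^{-1}$ uniformly. One \emph{defines} $\dot\mu_t\in\mcal H_{\mu_t}^*$ via the candidate formula itself, $H_{\mu_t}[\dot\mu_t]:=(\id+K_{\mu_t})[b_t^{-1}\dot b_t]\in\mcal H_{\mu_t}$, which is well-posed since $b_t^{-1}\dot b_t\in\mcal H_{\mu_t}$ by the isometry \cref{prop:isometry_RKHS}, and takes $\dot\mu_t$ as its Riesz dual. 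The delicate point is verifying that this is genuinely the weak-$\ast$ limit of $(\mu_{t+h}-\mu_t)/h$ in $\mcal H_{\mu_t}^*$: testing against $\phi\in\mcal H_{\mu_t}$ via the reproducing identity $\lrangle{b_s,b_t\phi}_{\mcal H_c}=\int (b_t\phi)/b_s\,d\mu_s$ (valid as $b_t\phi\in\mcal H_c$ by \cref{prop:isometry_RKHS}), one takes the difference quotient at $s=t+h$: the left side tends to $\lrangle{\dot b_t,b_t\phi}_{\mcal H_c}$ by hypothesis, while the right side decomposes into $\frac{1}{h}\lrangle{\mu_{t+h}-\mu_t,\phi}$ plus a correction converging to $-\int\phi\dot b_t/b_t\,d\mu_t$ (handled by the uniform convergence of $b_{t+h}^{-1}$ above). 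Matching this with $\lrangle{K_{\mu_t}[b_t^{-1}\dot b_t],\phi}_{\mcal H_{\mu_t}}=\int \phi\dot b_t/b_t\,d\mu_t$ (from $K_{\mu_t}[\psi]=H_{\mu_t}[\psi\mu_t]$ together with Riesz duality) closes the argument.
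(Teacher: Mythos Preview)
The paper does not prove this proposition; it is quoted verbatim from \cite[Lemma 4.13]{RGSD} without argument. Your proof is correct and self-contained within the paper's framework. The formal computation is exactly the right manipulation, and the rigorous justification---testing the difference quotient of $\mu$ against $\phi\in\mcal H_{\mu_t}$ via the identity $\lrangle{b_s,b_t\phi}_{\mcal H_c}=\int b_t\phi/b_s\,\dd\mu_s$ (which is just $b_s=H_c[b_s^{-1}\mu_s]$ paired against $b_t\phi$), then splitting the right side and using that weak convergence in $\mcal H_c$ implies uniform convergence (\cref{lemma:infnorm<Hcnorm}) to control the correction term---is sound. One small point worth making explicit: when you write that the correction converges to $-\int\phi\dot b_t/b_t\,\dd\mu_t$, you are using that $\mu_{t+h}\to\mu_t$ weakly-\txtstar{}, which follows from $b_{t+h}\to b_t$ in $\mcal H_c$ and the continuity of $B^{-1}$ (\cref{thm:embeddingB}); and when you match the two pieces with $\lrangle{\dot\mu_t,\phi}$, you are using that $K_{\mu_t}$ maps $\mcal H_{\mu_t}$ to itself (\cref{prop:analytic_HK_mu}) so that the candidate $H_{\mu_t}[\dot\mu_t]$ indeed lies in $\mcal H_{\mu_t}$. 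Both are available in the paper.
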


\noindent The differentiability of \((\mu_t)_t\) in \(\mcal H_{\mu_t}^*\) means that $s \mapsto \langle \mu_s,\phi \rangle$ is differentiable at $s=t$ for any $\phi \in \mcal H_{\mu_t}$, and that the derivative $\langle \dot{\mu}_t, \phi \rangle$ defines a linear form on $\mcal H_{\mu_t}^*$. 

Under the assumption of this proposition one can then check \cite[Theorem 4.12]{RGSD} that 
\begin{equation*}
\mathbf g_{\mu_t}(\Dot\mu_t, \Dot\mu_t)=\Tilde{\mathbf g}_{b_t}(\Dot b_t, \Dot b_t),
\end{equation*}
where for \(\mu \in \PX\), \(b \coloneqq B(\mu)\) and \(\Dot b \in b^\perp = \{ \phi \in \mcal H_c \ : \ \langle \phi, b \rangle_{\mcal H_c} = 0 \}\), 
\begin{equation}\label{eq:def:~g}
\Tilde{\mathbf g}_b\brac{\Dot b, \Dot b} \coloneqq \frac\varepsilon2\brac{\lrangle{\Dot b, \Dot b}_{\mcal H_c} + 2\lrangle{b^{-1}\Dot b, \brac{\id - K_\mu}\inv\sqbrac{b^{-1}\Dot b}}_{L^2(\X, \mu)}}.
\end{equation}  
The constraint $\Dot b \in b^\perp$ is natural: as $(b_t)_t$ is valued in $\mcal B$ which is included in the unit sphere of $\mcal H_c$, we have $\Dot b_t \in b_t^\perp$ as soon as the derivative exists. 

A key computation building on Proposition~\ref{prop:analytic_HK_mu} which gives the spectrum of $K_\mu$ yields the following estimate, independent on $\mu$ \cite[Proposition 4.15]{RGSD}:
\begin{equation}
\label{eq:equivalent_metric_tensor_flat}
\frac{\varepsilon}{2} \| \Dot b \|^2_{\mcal H_c} \leq \Tilde{\mathbf g}_b\brac{\Dot b, \Dot b} \leq \frac{\varepsilon}{2}  \left( 1+ 2 \exp\left( \frac{11}{2} \frac{\| c \|_\infty}{\varepsilon} \right) \right) \| \Dot b \|^2_{\mcal H_c}.
\end{equation}
It shows that the metric tensor $\Tilde{\mathbf g}_b$ compares uniformly to flat norm on $\mcal H_c$. Thus in the $b$-variable we have a fixed Hilbert space $\mcal H_c$ in which we can work, and the metric tensor is uniformly comparable to the flat norm, and can even be shown to be jointly continuous in $(b,\dot b)$ \cite[Proposition 4.14]{RGSD}.  

\paragraph{The Riemannian distance}

The article \cite{RGSD} finally defines a Riemannian-like distance \(\mathsf d_S\) on $\PX$ as
\begin{equation*}
\mathsf{d}_S(\mu,\nu)^2 \coloneqq \inf_{(\mu_t)_{t \in [0,1]}} \; \int_0^1 \mathbf{g}_{\mu_t}(\Dot \mu_t, \Dot \mu_t) \dd t,
\end{equation*}
where the infimum is taken among all paths $(\mu_t)_{t \in [0,1]}$ such that $(b_t)_t = (B(\mu_t))_t$ belongs to the Sobolev space \(\mathscr H^1([0, 1]; \mcal H_c)\) (see \cref{appendix:H1}) and $\mu_0 = \mu$, $\mu_1 = \nu$. This latter regularity condition is natural given that, in the $b$-variable, the metric tensor is equivalent to the flat norm~\eqref{eq:equivalent_metric_tensor_flat}. When \( (b_t)_t \in \mathscr H^1([0, 1]; \mcal H_c)\), the curve is differentiable for a.e. $t$ thus by construction of $\tilde{\mathbf{g}}_b$   
\begin{equation*}
\int_0^1 \mathbf{g}_{\mu_t}(\Dot \mu_t, \Dot \mu_t) \dd t = \int_0^1 \tilde{\mathbf{g}}_{b_t}(\Dot b_t ,\Dot b_t) \dd t. 
\end{equation*}
The minimum in the definition in $\mathsf{d}_S$ is attained \cite[Theorem 5.4]{RGSD}: geodesics exist for \(\mathsf d_S\).

By integrating the estimate~\eqref{eq:equivalent_metric_tensor_flat} regarding the metric tensor, it is possible to prove the following comparison result between the Riemannian distances.

\begin{proposition}[{\cite[Theorem 5.2]{RGSD}}]
\label{prop:equivalence_distance_curves}
The distance $\mathsf{d}_S$ metrizes weak-\txtstar{} convergence over $\PX$ and
there exists a constant $C$ which depends only on $\| c \|_\infty$ and $\varepsilon$ such that for all $\mu, \nu \in \PX$, 
\begin{equation*}
\frac{1}{C} \| B(\mu) - B(\nu) \|_{\mcal H_c} \leq \mathsf{d}_S(\mu,\nu) \leq C \| B(\mu) - B(\nu) \|_{\mcal H_c}.
\end{equation*}
In particular for $I$ an interval of $\R$, if $(\mu_t)_{t \in I}$ is a curve valued in $\PX$ and $(b_t)_{t \in I} = (B(\mu_t)_{t \in I})$ is its embedding in $\mcal B$, then $(\mu_t)_{t \in I} \in \mathrm{AC}^2(I;(\PX, \mathsf{d}_S))$ if and only if \((b_t)_{t \in I} \in \mathscr H^1(I; \mcal H_c)\).
\end{proposition}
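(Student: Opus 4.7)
The plan is to use the change of variables $b = B(\mu)$ to reduce everything to statements about curves valued in the Hilbert space $\mcal H_c$, exploiting the uniform two-sided bound~\eqref{eq:equivalent_metric_tensor_flat} between the pulled-back metric tensor $\tilde{\mathbf g}_b$ and the flat norm on $\mcal H_c$. The lower bound is then straightforward: for any competitor $(\mu_t)_{t \in [0,1]}$ in the definition of $\mathsf d_S(\mu,\nu)$, letting $(b_t)_t = (B(\mu_t))_t$ with $b_0 = B(\mu)$ and $b_1 = B(\nu)$, combining the identity between the length functionals, the lower estimate in~\eqref{eq:equivalent_metric_tensor_flat}, Cauchy-Schwarz on $[0,1]$, and the triangle inequality in $\mcal H_c$ yields
\[
\int_0^1 \mathbf g_{\mu_t}(\Dot\mu_t, \Dot\mu_t)\dd t \ge \frac\varepsilon2 \int_0^1 \|\Dot b_t\|_{\mcal H_c}^2 \dd t \ge \frac\varepsilon2 \|B(\mu) - B(\nu)\|_{\mcal H_c}^2.
\]
Taking the infimum gives $\mathsf d_S(\mu,\nu) \ge \sqrt{\varepsilon/2}\,\|B(\mu) - B(\nu)\|_{\mcal H_c}$.

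For the upper bound I would construct an explicit competitor by taking the chord in $\mcal H_c$ and projecting it onto the unit sphere: set $\gamma_t \coloneqq (1-t)B(\mu) + tB(\nu)$, which by convexity lies in the cone $H_c[\MpX]$, and $b_t \coloneqq \gamma_t/\|\gamma_t\|_{\mcal H_c}$, so that $b_t \in \mcal B$ whenever $\|\gamma_t\|_{\mcal H_c}$ does not vanish (in view of \cref{thm:embeddingB}). Differentiating and using the orthogonality $b_t \perp \Dot b_t$ yields $\|\Dot b_t\|_{\mcal H_c} \le \|B(\mu) - B(\nu)\|_{\mcal H_c}/\|\gamma_t\|_{\mcal H_c}$; combined with the upper estimate in~\eqref{eq:equivalent_metric_tensor_flat}, integrating gives the desired inequality, provided $\|\gamma_t\|_{\mcal H_c}$ stays uniformly away from zero over $\mu, \nu \in \PX$ and $t \in [0,1]$.

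Establishing this uniform lower bound is the main obstacle. Using $\|\gamma_t\|_{\mcal H_c}^2 = 1 - t(1-t)\|B(\mu) - B(\nu)\|_{\mcal H_c}^2$, the question reduces to showing $\langle B(\mu), B(\nu)\rangle_{\mcal H_c} \ge m > 0$ uniformly. From the identity $B(\mu) = H_c[\exp(f_\mu/\varepsilon)\mu]$ one gets the explicit formula
\[
\langle B(\mu), B(\nu)\rangle_{\mcal H_c} = \iint \exp\!\brac{\tfrac{1}{\varepsilon}\brac{f_\mu(x) + f_\nu(y) - c(x, y)}}\dd\mu(x)\dd\nu(y),
\]
and a uniform sup-norm bound on $f_\mu, f_\nu$ in terms of $\|c\|_\infty$ (obtained from the Schrödinger system after appropriate normalization and using compactness of $\X$) then yields $m > 0$. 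Alternatively, the compactness of $\mcal B$ granted by \cref{thm:embeddingB} together with the strict positivity of the pairing imply that the infimum of $\langle\cdot,\cdot\rangle_{\mcal H_c}$ on $\mcal B \times \mcal B$ is attained and strictly positive.

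The two final assertions follow from the equivalence of distances. Since $B$ is a homeomorphism from $(\PX, \text{weak-}\star)$ onto $(\mcal B, \|\cdot\|_{\mcal H_c})$ by \cref{thm:embeddingB}, the two-sided bound shows that $\mathsf d_S$ metrizes weak-$\star$ convergence on $\PX$. For the $\mathrm{AC}^2$ characterization, $(\mu_t)_{t\in I} \in \mathrm{AC}^2(I;(\PX, \mathsf d_S))$ is equivalent to $(b_t)_{t\in I} \in \mathrm{AC}^2(I;(\mcal H_c, \|\cdot\|_{\mcal H_c}))$, and the latter coincides with $\mathscr H^1(I;\mcal H_c)$ by the standard identification of absolutely continuous curves valued in a Hilbert space.
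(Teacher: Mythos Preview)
The paper does not supply its own proof of this proposition: it is quoted verbatim as \cite[Theorem 5.2]{RGSD}, so there is no in-paper argument to compare against. That said, your proposal is correct and is the natural strategy dictated by~\eqref{eq:equivalent_metric_tensor_flat}. The lower bound is immediate, and for the upper bound your competitor $b_t = \gamma_t/\|\gamma_t\|_{\mcal H_c}$ is admissible because $H_c[\MpX]$ is a convex cone (so $\gamma_t$ stays in it) and rescaling by a positive scalar preserves cone membership; the uniform positivity of $\langle B(\mu),B(\nu)\rangle_{\mcal H_c}$ follows cleanly from either of your two arguments. The two corollaries are then direct, the last one via \cref{thm:H1}~\ref{thm:H1:AC2}. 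This is almost certainly the same route taken in \cite{RGSD}, as the architecture of the argument is essentially forced once one has the two-sided tensor bound~\eqref{eq:equivalent_metric_tensor_flat} and the description of $\mcal B$ from \cref{thm:embeddingB}.
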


\begin{remark}
\label{rem:finiteX}
If $\X$ is a finite space, that is, it contains a finite number of points, then some analytical issues disappear. Indeed in this case $\mcal H_c = \CX = \MX$ as all are identified with $\R^\X$, and all the topologies on these spaces coincide, being finite-dimensional. In this case $H_c[\mcal M_+(\X)]$ is a closed convex cone with non-empty interior.
\end{remark}

\section{Main results on the Sinkhorn potential flow}
\label{sec:main_res}

We are now able to give a rigorous meaning to~\eqref{eq:Sinflow_formal} the equation of the Sinkhorn potential flow, under the assumptions recalled Page~\pageref{asmp:main}. 
While the definition is written in the variable $\mu$, most of the interesting analysis will be performed in the variable $b$. For a measure $\mu$, we write 
\begin{equation*}
P\mu \coloneqq \left\{ p \in \CX \ : \ p \leq 0 \text{ and } p = 0 \text{ on } \supp \mu \right\}
\end{equation*}
the set of admissible pressures, as a subset of $\CX$. The notion of absolutely continuous curve is recalled in \cref{appendix:H1}.

\begin{definition}[Definition of the Sinkhorn potential flow]
\label{def:sinflow}
Let $V \in \CX$ and $\bar{\mu}_0 \in \PX$. We say that $(\mu_t)_{t \geq 0}$ is a \emph{Sinkhorn potential flow} starting from $\bar \mu_0$ if it belongs to $\mathrm{AC}_\text{loc}^2([0, + \infty); (\PX,\mathsf{d}_S))$ with $\mu_0 = \bar \mu_0$ and for a.e. $t$, its derivative $\dot{\mu}_t \in \tgtspace{\mu_t}$ satisfies as an inclusion in $\CX / \R$
\begin{equation}
\label{eq:sinflow_mu}
\frac{\varepsilon}{2}(\id - K_{\mu_t}^2)\inv H_{\mu_t}[\dot \mu_t] + V + P \mu_t  \ni 0. 
\end{equation} 
By a slight abuse of notations, we say that a curve $(b_t)_{t \geq 0}$ valued in $\mcal B$ is a \emph{Sinkhorn potential flow} starting from $\bar b_0 \in \mcal B$ if $(B^{-1}(b_t))_{t \geq 0}$ is a Sinkhorn potential flow starting from $B^{-1}(\bar b_0)$.  
\end{definition}

\noindent This definition is correct since a curve $ (\mu_t)_{t \geq 0} \in \mathrm{AC}_\text{loc}^2([0, + \infty); (\PX,\mathsf{d}_S))$ is differentiable in $\tgtspace{\mu_t}$ for a.e. $t$, see \cref{prop:change_mudot_bdot} and \cref{prop:equivalence_distance_curves}. 

\begin{theorem}[Main results on the Sinkhorn potential flow]
\label{thm:main}
For any \(V\in \CX\) and \(\bar{\mu}_0 \in \PX\), there exists a unique Sinkhorn potential flow \((\mu_t)_{t \geq 0}\) of \(V\) starting at \(\bar{\mu}_0\). In addition the flow has the following properties.
\begin{thmenum}
\item \emph{Energy dissipation}. The energy \(t\mapsto E(\mu_t) \) is absolutely continuous, non-increasing, and for a.e. \(t\) there holds 
\begin{equation}
\label{eq:energy_dissipation}
\frac{\dd}{\dd t} E(\mu_t) = - \mathbf g_{\mu_t}(\Dot \mu_t, \Dot \mu_t).
\end{equation}
\vspace{-15pt}
\label{thm:main:energy}
\item \emph{Long term behavior}. There holds
\begin{equation}\label{eq:limE=minE}
\lim_{t \to + \infty} E(\mu_t) = \min_{\PX}E.
\end{equation}
Moreover, if \(V\) has a unique minimizer \(x^\star \in \X\) then, weakly-\txtstar{} in $\PX$,
\begin{equation*}
\lim_{t \to + \infty} \mu_t = \delta_{x^\star}.  
\end{equation*}
\vspace{-15pt}
\label{thm:main:asymptotic} 
\item \emph{Non-expansiveness}. The flow is non-expansive in $\mcal H_c$ i.e. if $(\mu^1_t)_{t \geq 0}$, $(\mu^2_t)_{t \geq 0}$ are two Sinkhorn potential flows of $V$, then \(t\mapsto \norm{B(\mu^1_t)- B(\mu^2_t)}_{\mcal H_c}\) is non-increasing.\label{thm:main:contractivity}
\end{thmenum}
\end{theorem}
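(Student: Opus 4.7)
The overall strategy is to transport the problem via the change of variables $b = B(\mu)$ into the fixed Hilbert space $\mcal H_c$, where \cref{rmk:maximal_monotone} already hints at a monotone structure: the evolution is a ``constrained rotation'' on the unit sphere of $\mcal H_c$. Concretely, I plan to recast \eqref{eq:sinflow_mu} as a differential inclusion $\dot b_t \in -\mcal A(b_t)$ for a multivalued operator $\mcal A$ on $\mcal H_c$, show that $\mcal A + \lambda \id$ is maximal monotone for some $\lambda \geq 0$, and invoke the Komura--Brezis theory of gradient flows for maximal monotone operators. This will simultaneously give existence, uniqueness and $\mathrm{AC}^2_{\mathrm{loc}}$ regularity, and yield the non-expansiveness statement (c) as an automatic feature of the semigroup produced.

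The first step is a clean reformulation of \eqref{eq:sinflow_mu} purely in the $b$-variable. Using \cref{prop:change_mudot_bdot}, which reads $H_{\mu_t}[\dot\mu_t] = (\id + K_{\mu_t})[b_t^{-1}\dot b_t]$, together with the factorization $\id - K_{\mu_t}^2 = (\id - K_{\mu_t})(\id + K_{\mu_t})$ which is legitimate on $\CX/\R$ by \cref{prop:ipmK_bijection}, the ``de-regularizing'' operator $(\id - K_{\mu_t}^2)^{-1}H_{\mu_t}$ collapses to $(\id - K_{\mu_t})^{-1}\sqbrac{b_t^{-1}\,\cdot\,}$. Applying $(\id - K_{\mu_t})$ to \eqref{eq:sinflow_mu} and using $K_{\mu_t}[1]=1$ to kill the residual constant, the inclusion becomes the one-line equation
\[
\tfrac{\varepsilon}{2}\dot b_t + b_t (\id - K_{\mu_t})[V + p_t] = 0 \quad \text{in } \mcal H_c, \qquad p_t \in P\mu_t,
\]
and the constraints $\snorm{b_t}_{\mcal H_c}=1$ and $b_t \in H_c\sqbrac{\MpX}$ (i.e. $\mu_t \in \PX$) are respectively preserved by mass conservation $\lrangle{\dot\mu_t,1}=0$ and by the Lagrange-multiplier role of $p_t$.

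The main obstacle is Step 2: establishing that the operator
$\mcal A : b \mapsto \cbrac{b\,(\id - K_{B^{-1}(b)})[V+p] \st p \in P\, B^{-1}(b)}$
is almost monotone in the flat $\mcal H_c$ inner product and maximal. The monotonicity requires exploiting the algebraic identity \eqref{eq:Kmu_in_b}, which reads $K_\mu[\phi]=b^{-1} H_c\sqbrac{\phi H_c^{-1}[b]}$, and the spectral estimates on $K_\mu$ from \cref{prop:analytic_HK_mu}; the target is an inequality of the form $\lrangle{\mcal A(b^1)-\mcal A(b^2),\, b^1-b^2}_{\mcal H_c} \geq -\lambda \snorm{b^1-b^2}_{\mcal H_c}^2$. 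Maximality will be obtained via Minty's criterion, reducing it to the solvability of the resolvent inclusion $b + \tau \mcal A(b) \ni b_0$; this resolvent problem is, up to the change of variables, exactly the iterate \eqref{eq:SJKO}, whose existence is granted by direct convex minimization thanks to \cref{thm:feydythm1}, and whose optimality condition (\cref{lm:optimality_SJKO}) naturally produces $p \in P\mu$ with the correct sign.

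Once well-posedness of the flow is secured, parts (a) and (b) follow. Pairing \eqref{eq:sinflow_mu} with $\dot\mu_t \in \tgtspace{\mu_t}$ and using the definition \eqref{eq:g_mu} of $\mathbf g_{\mu_t}$, together with $\lrangle{p_t, \dot\mu_t}=0$ (obtained by differentiating the identity $\lrangle{p_t, \mu_t}\equiv 0$ that holds since $p_t \leq 0$ vanishes on $\supp\mu_t$), yields the identity \eqref{eq:energy_dissipation}, and the Lipschitz dependence of $E$ on $\mu$ combined with $\mu \in \mathrm{AC}^2_{\mathrm{loc}}$ gives the absolute continuity of $t \mapsto E(\mu_t)$. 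Integrating \eqref{eq:energy_dissipation} and using \eqref{eq:equivalent_metric_tensor_flat} gives $\snorm{\dot b_t}_{\mcal H_c} \in L^2(\R_+)$, so that $(b_t)$ admits weak-$*$ cluster points as $t \to +\infty$ by compactness of $\mcal B$ (\cref{thm:embeddingB}). Any such cluster point $\mu_\infty$ must satisfy \eqref{eq:sinflow_mu} with $\dot\mu = 0$, i.e. there exists $p_\infty \leq 0$ vanishing on $\supp\mu_\infty$ with $V + p_\infty$ constant on $\X$: this is precisely the first-order optimality condition for $\mu_\infty \in \argmin_{\PX} E$, which forces $E(\mu_\infty) = \min_{\PX} E$. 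Monotonicity of $t \mapsto E(\mu_t)$ then promotes this to a full limit, and when $V$ has a unique minimizer $x^\star$ the only minimizer of $E$ is $\delta_{x^\star}$, so every cluster point coincides with $\delta_{x^\star}$ and the entire trajectory converges weakly-$*$ to it.
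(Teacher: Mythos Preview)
Your reformulation in the $b$-variable is correct and matches the paper's \cref{thm:change_flow_b}, and your treatment of parts (a) and (b) is broadly aligned with the paper's arguments in \cref{lm:energy_dissipation} and \cref{lemma:cvtocritpoint,lemma:critpointsaremin}. However, your existence proof (Step 2) has a genuine gap that the paper explicitly identifies and works around.

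\medskip

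\textbf{The maximal monotone route fails in general.} You propose to show that $\mcal A + \lambda\,\id$ is maximal monotone on $\mcal H_c$ and then invoke Komura--Brezis. But as the paper explains in \cref{rmk:maximal_monotone} and \cref{rmk:needCX}, the operator $b \mapsto Wb + \tilde P b$ is valued in $\CX$, not in $\mcal H_c$: for generic $b \in \mcal B$ the product $Vb$ need not lie in $\mcal H_c$. The paper even gives the explicit example of the flow of a Dirac mass (\cref{prop:flow_dirac}), along which $Vb_t \notin \mcal H_c$ so that $b_t \notin \dom(W)$ if $W$ is viewed as an unbounded $\mcal H_c$-valued operator. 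Thus the evolution cannot be cast as a maximal monotone inclusion in $\mcal H_c$, and Minty's criterion is not available. The paper's existence proof circumvents this by first treating finite $\X$, where $\CX = \mcal H_c = \R^\X$ and maximal monotone theory does apply (\cref{thm:existenceuniqueness_finite}), and then approximating a general $\X$ by dense finite subsets and passing to the limit via compactness and the isometric embedding of \cref{lemma:interp} (\cref{prop:flow_ntoflow}).

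\medskip

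\textbf{The resolvent is not the SJKO step.} Your claim that the resolvent inclusion $b + \tau \mcal A(b) \ni b_0$ is ``exactly the iterate \eqref{eq:SJKO}'' is incorrect. The optimality condition of \eqref{eq:SJKO} involves the Schr\"odinger potential $f_{\mu_{k+1}^\tau,\mu_k^\tau}$ between two \emph{different} measures (\cref{lm:optimality_SJKO}), whereas the resolvent of $\mcal A$ is a linear equation in $b$ (up to the pressure). The two coincide only after the linearization~\eqref{eq:approx_dfts/ds}, which is an approximation, not an identity; indeed the paper devotes all of Section~\ref{section:tauto0} to proving that \eqref{eq:SJKO} converges to the flow, and succeeds only for finite $\X$.

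\medskip

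\textbf{A smaller issue in (a).} Your justification of $\lrangle{p_t,\dot\mu_t}=0$ by ``differentiating $\lrangle{p_t,\mu_t}\equiv 0$'' is not correct: differentiating would also produce a term $\lrangle{\dot p_t,\mu_t}$, and anyway $\dot\mu_t \in \tgtspace{\mu_t}$ need not pair against an arbitrary $p_t \in \CX$. The paper instead pairs~\eqref{eq:sinflow_mu} with the finite difference $(\mu_s - \mu_t)/(s-t) \in \MoX$, uses the sign of $\lrangle{\mu_s,p_t}$ for $s \gtrless t$, and passes to the limit (\cref{lm:energy_dissipation}). Absolute continuity of $t \mapsto E(\mu_t)$ is not free either; the paper obtains it from the finite-dimensional approximation together with the uniform decay $\snorm{\dot b_t}_{\mcal H_c} \leq C/\sqrt{t}$ that follows from \cref{coro:normdotb} and \cref{crl:bdot_L2}.
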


\noindent The proof will be provided in the next two sections. We first state two corollaries, on the integrability and monotonicity of the flow.

\begin{corollary}
\label{crl:bdot_L2}
If $(b_t)_{t \geq 0}$ is a Sinkhorn potential flow then $(\dot{b}_t)_{t \geq 0} \in L^2([0, + \infty); \mcal H_c)$: with $\mathrm{osc}(V) = \max V - \min V$ the oscillation of $V$,
\begin{equation}
\label{eq:born_bdot_L2}
\int_0^{+ \infty} \| \dot{b}_t \|^2_{\mcal H_c} \dd t \leq \frac{2}{\varepsilon} \mathrm{osc}(V).
\end{equation} 
\end{corollary}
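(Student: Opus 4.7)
The plan is to combine the energy dissipation identity \eqref{eq:energy_dissipation} of \refthmitem{thm:main}{energy} with the lower bound \eqref{eq:equivalent_metric_tensor_flat} on the metric tensor in the $b$-variable.

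First, for any $T > 0$, since $(\mu_t)_{t \geq 0} \in \mathrm{AC}_\text{loc}^2([0,+\infty);(\PX,\mathsf{d}_S))$ the function $t \mapsto E(\mu_t)$ is absolutely continuous on $[0,T]$, so integrating \eqref{eq:energy_dissipation} between $0$ and $T$ gives
\begin{equation*}
E(\mu_T) - E(\mu_0) = - \int_0^T \mathbf{g}_{\mu_t}(\dot{\mu}_t, \dot{\mu}_t) \dd t.
\end{equation*}
By construction of $\tilde{\mathbf{g}}_b$ and \cref{prop:change_mudot_bdot}, for a.e.\ $t$ we have $\mathbf{g}_{\mu_t}(\dot{\mu}_t, \dot{\mu}_t) = \tilde{\mathbf{g}}_{b_t}(\dot{b}_t, \dot{b}_t)$, and the lower bound in \eqref{eq:equivalent_metric_tensor_flat} yields $\tilde{\mathbf{g}}_{b_t}(\dot{b}_t, \dot{b}_t) \geq \frac{\varepsilon}{2} \| \dot{b}_t \|_{\mcal H_c}^2$. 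Substituting, we get
\begin{equation*}
\frac{\varepsilon}{2} \int_0^T \| \dot{b}_t \|_{\mcal H_c}^2 \dd t \leq E(\mu_0) - E(\mu_T).
\end{equation*}

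Next, observe that for any probability measure $\mu \in \PX$ we have $\min V \leq E(\mu) = \langle \mu, V \rangle \leq \max V$, hence $E(\mu_0) - E(\mu_T) \leq \mathrm{osc}(V)$. Combining and multiplying by $2/\varepsilon$ gives
\begin{equation*}
\int_0^T \| \dot{b}_t \|_{\mcal H_c}^2 \dd t \leq \frac{2}{\varepsilon} \mathrm{osc}(V).
\end{equation*}
Since the right-hand side is independent of $T$, the monotone convergence theorem applied as $T \to +\infty$ gives \eqref{eq:born_bdot_L2}, which also establishes $(\dot{b}_t)_{t \geq 0} \in L^2([0, + \infty); \mcal H_c)$.

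There is essentially no obstacle here: the statement is a direct consequence of the energy dissipation identity and the uniform equivalence between the metric tensor $\tilde{\mathbf{g}}_b$ and the flat norm on $\mcal H_c$; the only ingredient besides \cref{thm:main} is the trivial bound $|E(\mu)| \leq \| V \|_\infty$ on a bounded energy functional.
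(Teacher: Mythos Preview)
Your proof is correct and follows essentially the same approach as the paper's own proof: combine the energy dissipation identity~\eqref{eq:energy_dissipation} with the lower bound in~\eqref{eq:equivalent_metric_tensor_flat} to get $\int_0^T \|\dot b_t\|_{\mcal H_c}^2\,\dd t \leq \frac{2}{\varepsilon}(E(\mu_0)-E(\mu_T))$, then bound the right-hand side by $\frac{2}{\varepsilon}\mathrm{osc}(V)$ and let $T\to+\infty$. The only cosmetic difference is that you spell out the monotone convergence step explicitly.
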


\begin{proof}
We write $\mu_t = B^{-1}(b_t)$. From the bound~\eqref{eq:equivalent_metric_tensor_flat} and the energy dissipation equality~\eqref{eq:energy_dissipation}, for any $T > 0$: 
\begin{equation*}
\int_0^{T} \| \dot{b}_t \|^2_{\mcal H_c} \dd t \leq \frac{2}{\varepsilon} \int_{0}^{T} \tilde{\mathbf{g}}_{b_t}(\dot{b}_t,\dot{b}_t) \dd t = \frac{2}{\varepsilon} \int_{0}^{T} \mathbf{g}_{\mu_t} (\dot{\mu}_t,\dot{\mu}_t) \dd t = \frac{2}{\varepsilon} (E(\mu_0) - E(\mu_T)).
\end{equation*} 
The conclusion follows as $\mathrm{osc}(V) = \max E - \min E$ and $T$ is arbitrary.
\end{proof}

\begin{corollary}\label{coro:normdotb}
If $(b_t)_{t \geq 0}$ is a Sinkhorn potential flow then \(t \mapsto \norm{\Dot b_t}_{\mcal H_c}\) is non-increasing. %and converges to \(0\) as \(t\to+\infty\). 
\end{corollary}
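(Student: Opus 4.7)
The plan is to combine the non-expansiveness of \cref{thm:main:contractivity} with the time-translation invariance of the defining inclusion~\eqref{eq:sinflow_mu}, through a difference-quotient argument.

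First, since the equation~\eqref{eq:sinflow_mu} does not depend explicitly on $t$, whenever $(b_t)_{t \geq 0}$ is a Sinkhorn potential flow and $h > 0$, the time-shifted curve $(b_{t+h})_{t \geq 0}$ is itself a Sinkhorn potential flow (starting from $b_h$ in place of $\bar b_0$). Applying \cref{thm:main:contractivity} to the two flows $(b_t)_{t \geq 0}$ and $(b_{t+h})_{t \geq 0}$ then shows that
\begin{equation*}
t \mapsto \|b_{t+h} - b_t\|_{\mcal H_c}
\end{equation*}
is non-increasing on $[0, +\infty)$.

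It remains to pass to the limit $h \to 0^+$. By \cref{prop:equivalence_distance_curves}, the curve $(b_t)_{t \geq 0}$ belongs to $\mathscr H^1_{\mathrm{loc}}([0, +\infty); \mcal H_c)$, and the classical characterization of the metric derivative of a Sobolev curve valued in a Hilbert space gives, for Lebesgue-a.e. $t \geq 0$,
\begin{equation*}
\lim_{h \to 0^+} \frac{\|b_{t+h} - b_t\|_{\mcal H_c}}{h} = \|\dot b_t\|_{\mcal H_c}.
\end{equation*}
Dividing the monotonicity of the previous step by $h$ and taking $h \to 0^+$ at points where this limit exists yields $\|\dot b_s\|_{\mcal H_c} \geq \|\dot b_t\|_{\mcal H_c}$ for a.e. $0 \leq s \leq t$, which is the desired conclusion (understood in the a.e. sense natural for the derivative of an AC curve).

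There is essentially no obstacle: the corollary is a direct consequence of \cref{thm:main:contractivity}. The only minor technical point is the validity of the differentiation of $h \mapsto \|b_{t+h} - b_t\|_{\mcal H_c}$ at $h = 0^+$, which is standard once the regularity $(b_t)_{t \geq 0} \in \mathscr H^1_{\mathrm{loc}}$ has been recorded.
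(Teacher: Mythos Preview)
The proposal is correct and follows essentially the same approach as the paper: time-shift the flow, apply the non-expansiveness from \cref{thm:main}\,\ref{thm:main:contractivity} to the pair of flows, and divide the resulting difference-quotient inequality by $h$ before letting $h\to 0^+$. Your added remarks on the $\mathscr H^1_{\mathrm{loc}}$ regularity and the metric-derivative limit make explicit the only technical point the paper leaves implicit.
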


\begin{proof}
If \(b\) is a Sinkhorn potential flow, \((b_{t+u})_{t \geq 0}\) also is one for any \(u>0\), and as a result of \refthmitem{thm:main}{contractivity}, for \(s\geq t\) holds \(\norm{b_{s+u} - b_s}_{\mcal H_c} \leq \norm{b_{t+u} - b_t}_{\mcal H_c}\). Dividing by \(u\) and making \(u\to 0\) gives \(\norm{\Dot b_s}_{\mcal H_c} \leq \norm{\Dot b_t}_{\mcal H_c}\). 
\end{proof}

\begin{remark}[Comparison with the transport equation]
Before we dive into the technical details, the reader can compare to what can be said for the gradient flow of $E$ in the Wasserstein geometry.
On $\X \subseteq \R^d$ the gradient flow of a potential energy $E : \mu \mapsto \int_\X V \dd \mu$ with respect to $\sqrt{S_0} = \sqrt{\OT_0} = W_2$ corresponds to the transport equation
\begin{equation*}
\frac{\partial \mu_t}{\partial t} = \div(\mu_t \nabla V)
\end{equation*} 
completed with no-flux boundary conditions.
Assuming $V$ is smooth and ignoring boundary issues, denoting by $\Phi_t : \X \to \X$ the flow of the ODE $\dot{x}_t = - \nabla V (x_t)$, we recall that $\mu_t = (\Phi_t)_\sharp \mu_0$.

Thus global existence of the flow is obtained if $V$ is semi-convex, and the flow is non-expansive in $W_2$ if the flow of $-\nabla V$ is non-expansive on $\R^d$, equivalently if $V$ is convex. On the other hand a Sinkhorn potential flow requires less regularity to get existence: $\X$ is a compact space (without a differentiable structure) and $V$ is any continuous function (not necessarily semi-convex nor differentiable). These assumptions also always yield non-expansiveness, though for a different metric. This could not be taken for granted given how unstable the equation looks at first glance (\cref{rm:ill_posed}).

Maybe more surprisingly, the energy always converges to $\min E$, whereas in the Wasserstein geometry the flow can get stuck in local minima.  
Indeed if $\bar{\mu}_0$ is concentrated on a local minimum of $V$, then the Wasserstein gradient flow of $E$ does not move, whereas Theorem~\ref{thm:main} shows that the Sinkhorn potential flow should converge to the point of global minimum. Though the proof below seems to rely on obscure algebraic computations, some intuition can be found as follows. For any $\mu,\nu \in  \PX$, the curve $t \mapsto (1-t) \mu + t \nu$ which interpolates linearly between $\mu$ and $\nu$ is always Lipschitz for $\mathsf{d}_S$ (\cref{rm:vertical_horizontal}), whereas it is not for $W_2$ if $\mu,\nu$ have disjoint support. Thus in the geometry of Sinkhorn divergences it is possible to ``teleport'' mass (that is, move linearly in the flat geometry of measures) and thus escape local minima, something which is not possible with classical optimal transport (\cref{rm:vertical_horizontal}). This intuition is confirmed by our empirical investigation in Section~\ref{sec:numerics}. 
\end{remark}

\section{The structure of the flow in the variable \texorpdfstring{$b$}{b}}
\label{sec:structure}

To prove \cref{thm:main} we write the equation in the variable $b = B(\mu)$ rather than in $\mu$. Indeed, it has a very peculiar structure when analyzed in the flat geometry of $\mcal H_c$.  

\subsection{The equation of the flow}

The first step is to obtain the equation satisfied by $(b_t)_{t \geq 0} = (B(\mu_t))_{t \geq 0}$ which is valued in the space $\mcal B \subset \mcal H_c$. Deriving the equation is not so difficult in itself, it relies only on \cref{prop:change_mudot_bdot} and algebraic manipulations.

We denote $V : \CX \to \CX$ for the linear operator corresponding to multiplication by $V$, meaning $Vb$ is the multiplication of the functions $V$ and $b$. We also write \(V^*\) the linear operator \(H_c[\MX] \to \mcal H_c\) defined by $H_cVH_c\inv$, that is,
\begin{equation*}
V^*b \coloneqq  H_c[V H_c\inv[b]].
\end{equation*}
% The domain of $V^*$, that is, $H_c[\MX]$, is dense in $\mcal H_c$ by construction. 
We will use intensively the linear operator $W : H_c[\MX] \to \CX$ defined as 
\begin{equation*}
W \coloneqq \frac{2}{\varepsilon}(V - V^*).
\end{equation*} 
Lastly, we write $\tilde{P}$ for the multivalued map from $H_c[\MpX]$ to $\CX$ defined as
\begin{equation*}
\tilde{P} b \coloneqq \{ p \in \CX \ : \ p \leq 0 \text{ and } p = 0 \text{ on } \supp{H_c^{-1}[b]} \}.
\end{equation*}
With $b = B(\mu)$, given that $H_c^{-1}[b] = b^{-1} \mu$ and $b > 0$ on $\X$, we have $\tilde{P} b = P \mu$.

\begin{theorem}
\label{thm:change_flow_b}
A curve $(b_t)_{t \geq 0}$ valued in $\mcal B$ is a Sinkhorn potential flow starting from $\bar{b}_0$ if and only if it is an element of $\mathscr H^1_\text{loc}([0, + \infty); \mcal H_c)$ with $b_0 = \bar{b}_0$ which satisfies for a.e. $t$ as an inclusion in $\CX$
\begin{equation}
\label{eq:b_flat}
\dot{b}_t + W b_t + \tilde{P} b_t  \ni 0, \\
\end{equation}
\end{theorem}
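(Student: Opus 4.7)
The proof rests on three ingredients: the change-of-variables formula $H_{\mu_t}[\dot \mu_t] = (\id + K_{\mu_t})[b_t^{-1}\dot b_t]$ from \cref{prop:change_mudot_bdot}, the factorization $(\id - K_\mu^2) = (\id - K_\mu)(\id + K_\mu)$, and the identity $b K_\mu[\phi] = H_c[\phi H_c^{-1}[b]]$ for $b = B(\mu)$, which rewrites \eqref{eq:Kmu_in_b}. The equivalence between $(\mu_t) \in \mathrm{AC}^2_\mathrm{loc}([0,+\infty);(\PX,\mathsf d_S))$ and $(b_t) \in \mathscr H^1_\mathrm{loc}([0,+\infty); \mcal H_c)$ is already provided by \cref{prop:equivalence_distance_curves}, so one may argue pointwise at a time $t$ where both curves are differentiable.

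\textbf{Forward direction.} Assume that $(\mu_t)_{t\geq 0}$ solves \eqref{eq:sinflow_mu}. At such a $t$, the inclusion provides $p_t \in P\mu_t$ and $C_t \in \R$ with $\frac{\varepsilon}{2}(\id - K_{\mu_t}^2)^{-1} H_{\mu_t}[\dot \mu_t] + V + p_t = C_t$ in $\CX/\R$. Substituting the change-of-variables formula and using $(\id - K_\mu^2)^{-1}(\id + K_\mu) = (\id - K_\mu)^{-1}$ on $\CX/\R$, which is legitimate by \cref{prop:ipmK_bijection}, this becomes $\frac{\varepsilon}{2}(\id - K_{\mu_t})^{-1}[b_t^{-1}\dot b_t] + V + p_t = C_t$. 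Applying $(\id - K_{\mu_t})$, which annihilates constants because $K_{\mu_t}[1] = 1$, and then multiplying by $\frac{2}{\varepsilon}b_t$ yields $\dot b_t + \frac{2}{\varepsilon} b_t (\id - K_{\mu_t})(V+p_t) = 0$ in $\CX$. Expanding with $b_t K_{\mu_t}[\phi] = H_c[\phi H_c^{-1}[b_t]]$ converts the $V$ part into $V b_t - V^* b_t = \frac{\varepsilon}{2} W b_t$, so the equation reads $\dot b_t + W b_t + \frac{2}{\varepsilon}\bigl(b_t p_t - H_c[p_t H_c^{-1}[b_t]]\bigr) = 0$. Since $H_c^{-1}[b_t] = b_t^{-1}\mu_t$ has support $\supp{\mu_t}$ and $p_t$ vanishes there, the convolution term drops out. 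Setting $\tilde p_t := \frac{2}{\varepsilon} b_t p_t$, the positivity of $b_t$ gives $\tilde p_t \leq 0$ and $\tilde p_t = 0$ on $\supp{H_c^{-1}[b_t]}$, hence $\tilde p_t \in \tilde P b_t$ and \eqref{eq:b_flat} holds.

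\textbf{Converse.} Starting from $\dot b_t + W b_t + \tilde p_t = 0$ with $\tilde p_t \in \tilde P b_t$, define $p_t := \frac{\varepsilon}{2} b_t^{-1} \tilde p_t$; since $b_t > 0$ one obtains $p_t \in P\mu_t$. Every step above is reversible: reintroduce the vanishing term $H_c[p_t H_c^{-1}[b_t]]$, divide by $\frac{2}{\varepsilon} b_t$, and apply the bounded inverse $(\id - K_{\mu_t})^{-1}$ on $\CX/\R$ from \cref{prop:ipmK_bijection}, which introduces the free constant $C_t$. This recovers the inclusion \eqref{eq:sinflow_mu}.

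\textbf{Book-keeping and obstacle.} The only delicate point is ensuring that every object lives where claimed: $\dot \mu_t \in \tgtspace{\mu_t}$ makes $H_{\mu_t}[\dot \mu_t] \in \mcal H_{\mu_t} \subset \CX$ well defined, and the isometry of \cref{prop:isometry_RKHS} ensures $b_t^{-1}\dot b_t \in \mcal H_{\mu_t}$ so that $(\id - K_{\mu_t})^{-1}[b_t^{-1}\dot b_t]$ can be computed in $\CX/\R$. I do not expect any conceptual obstacle; the argument is purely algebraic once the factorization $(\id - K^2) = (\id - K)(\id + K)$ is exploited. The striking outcome---which is really the whole point of the change of variables---is that the singular operator $\frac{\varepsilon}{2}(\id - K_{\mu_t}^2)^{-1} H_{\mu_t}$ collapses in the $b$-variable into the bounded, $\mu_t$-independent operator $W$ acting on the fixed Hilbert space $\mcal H_c$, which is what will make the subsequent analysis of \cref{thm:main} tractable.
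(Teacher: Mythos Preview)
Your proof is correct and follows essentially the same approach as the paper's: both use \cref{prop:equivalence_distance_curves} for the regularity equivalence, then combine \cref{prop:change_mudot_bdot} with the factorization $(\id - K_\mu^2)^{-1}(\id + K_\mu) = (\id - K_\mu)^{-1}$, apply $(\id - K_{\mu_t})$ and multiply by $b_t$, and finally use \eqref{eq:Kmu_in_b} to identify $b_t(\id - K_{\mu_t})[V] = \frac{\varepsilon}{2}Wb_t$. The only cosmetic difference is in the pressure term: the paper observes directly that $K_{\mu_t}[p] = 0$ and $P\mu_t = \tilde P b_t$ as sets, whereas you expand the convolution $H_c[p_t H_c^{-1}[b_t]]$ and exhibit the explicit bijection $p_t \leftrightarrow \frac{2}{\varepsilon}b_t p_t$ between $P\mu_t$ and $\tilde P b_t$---these are the same observation.
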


\begin{proof}
Given \cref{prop:equivalence_distance_curves}, it is clear that $(b_t)_{t \geq 0} = (B(\mu_t))_{t \geq 0}$ for some curve $(\mu_t)_{t \geq 0}$ in $\mathrm{AC}_\text{loc}^2([0, + \infty); (\PX,\mathsf{d}_S))$ if and only if $b \in \mathscr H^1_\text{loc}([0, + \infty); \mcal H_c)$. The only delicate point is to prove that~\eqref{eq:sinflow_mu} and~\eqref{eq:b_flat} are equivalent.    

To that end we rely on \cref{prop:change_mudot_bdot} which relates $\dot{\mu}_t$ and $\dot{b}_t$. Indeed, using the algebraic identity $(\id - K_{\mu_t}^2)^{-1} = (\id - K_{\mu_t})^{-1} (\id + K_{\mu_t})^{-1}$ and \cref{prop:change_mudot_bdot},
\begin{equation}
\label{eq:zz_aux_metric_tensor} 
\frac{\varepsilon}{2} (\id - K_{\mu_t}^2)\inv H_{\mu_t}[\dot \mu_t] = \frac{\varepsilon}{2} (\id - K_{\mu_t})\inv [b_t^{-1} \dot{b}_t].
\end{equation}
Thus we start from~\eqref{eq:sinflow_mu} and compose it with $\frac{2}{\varepsilon} (\id - K_{\mu_t})$: as $(\id - K_{\mu_t})$ vanishes on constant functions, it implies
\begin{equation}
\label{eq:zz_aux_derivation_equation} 
b_t^{-1} \dot{b}_t + \frac{2}{\varepsilon}  (\id - K_{\mu_t})[V+P \mu_t] \ni 0. 
\end{equation} 
Conversely, if this inclusion~\eqref{eq:zz_aux_derivation_equation} holds, it also holds as an inclusion in $\CX/\R$, and thus as $(\id - K_{\mu_t})$ is injective on $\CX / \R$ (\cref{prop:ipmK_bijection}) we find that~\eqref{eq:zz_aux_derivation_equation} implies~\eqref{eq:sinflow_mu}. Multiplying the inclusion~\eqref{eq:zz_aux_derivation_equation} by $b_t$, we find that~\eqref{eq:sinflow_mu} is equivalent to the following inclusion on $\CX$: 
\begin{equation*}
\dot{b}_t + \frac{2}{\varepsilon} b_t (\id - K_{\mu_t})[V+P \mu_t] \ni 0. 
\end{equation*}  
Using the expression of $K_\mu$ given in~\eqref{eq:Kmu_in_b}, we have $b_t K_{\mu_t}[V] = 	V^* b_t$ so that $\frac{2}{\varepsilon} b_t (\id - K_{\mu_t})[V] = W b_t$.   
For the pressure we use $K_{\mu_t}[p]=0$ for any $p \in P \mu_t$ and $P \mu_t = \tilde{P} b_t$. That~\eqref{eq:sinflow_mu} is equivalent to~\eqref{eq:b_flat} follows. 
\end{proof}

\subsection{The monotone structure of the flow}

We highlight and explain the structure of the differential inclusion~\eqref{eq:b_flat}. Even though it no longer has a gradient flow structure, it is a monotone evolution in the flat geometry of $\mcal H_c$. 	

\begin{definition}
The bilinear form \(\llrrangle{\cdot,\cdot}_{\mcal H_c}:H_c[\MX]\times \CX\to \R\) is defined by
\begin{equation*}
\llrrangle{\phi, \psi}_{\mcal H_c} \coloneqq \lrangle{H_c\inv [\phi], \psi}
\end{equation*}
as a duality pairing between \(\MX\) and \(\CX\).
\end{definition}

\noindent This definition may seem slightly convoluted but by definition of $H_c$ as the Riesz representation operator,
\begin{equation*}
\forall \phi, \psi\in H_c[\MX], \quad \llrrangle{\phi,\psi}_{\mcal H_c} = \lrangle{\phi,\psi}_{\mcal H_c}.
\end{equation*}
That is, the restriction of \(\llrrangle{\cdot, \cdot}_{\mcal H_c}\) to \(H_c[\MX]\times H_c[\MX]\) is symmetric and coincides with $\lrangle{\cdot, \cdot}_{\mcal H_c}$. However, the key point is that the second argument of $\llrrangle{\cdot,\cdot}_{\mcal H_c}$ may be much less regular than $\mcal H_c$, as it can be any continuous function.  

With \(E:\mu \mapsto \lrangle{\mu, V}\), we write $\tilde{E} = E \circ B^{-1}$ for the energy defined over $\mcal B$. Then given the expression of $B\inv$ in~\eqref{eq:expression_Binv}, with $b = B(\mu)$,
\begin{equation}\label{eq:~E}
\Tilde E(b) = E(\mu) = \llrrangle{b, Vb}_{\mcal H_c}.
\end{equation}
In particular the energy is linear in $\mu$ but quadratic in $b$. It explains why the equation~\eqref{eq:b_flat} is \emph{linear} in the variable $b$ once we forget about the pressure. The expression in~\eqref{eq:~E} does not reduce to a dot product in \(\mcal H_c\) since for a generic \(b\in \mcal B\), the product \(Vb\) is in \(\CX\) and not necessarily in \(\mcal H_c\).

\begin{proposition}
\label{prop:adjoint_VP}
The following holds.
\begin{thmenum}
\item
For any $\phi, \psi \in H_c[\MX]$, we have 
\begin{equation}
\label{eq:V*}
\llrrangle{\phi, V\psi}_{\mcal H_c} = \llrrangle{\psi, V^* \phi}_{\mcal H_c},
\end{equation}
and in particular $\llrrangle{\phi, W\psi}_{\mcal H_c} = - \llrrangle{\psi, W \phi}_{\mcal H_c}$.
\label{prop:adjoint_VP:V}
\item
For any $b \in H_c[\mcal M_+(\X)]$ there holds
\begin{equation}
\label{eq:defTildeP}
\tilde{P}b = \cbrac{p \in \CX  \ : \  \forall \phi \in H_c[\mcal M_+(\X)], \, \llrrangle{\phi - b, p}_{\mcal H_c} \leq 0}, 
\end{equation}
and in particular $\llrrangle{b, p}_{\mcal H_c} = 0 $ if $p \in \tilde{P}b$.
\label{prop:adjoint_VP:P}
\end{thmenum}

\end{proposition}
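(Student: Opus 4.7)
The plan is to unfold the definitions and reduce everything to Fubini on the symmetric kernel $k_c$. For $\phi \in H_c[\MX]$ I will denote $\mu_\phi \coloneqq H_c\inv[\phi] \in \MX$, so that by definition $\llrrangle{\phi,\cdot}_{\mcal H_c} = \lrangle{\mu_\phi,\cdot}$ as a duality pairing between $\MX$ and $\CX$. This is the only translation needed between the two sides.

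For part \emph{(a)}, I would rewrite both sides of the claimed identity in terms of representing measures. The left side is
\[
\llrrangle{\phi, V\psi}_{\mcal H_c} = \int V(x)\psi(x)\dd\mu_\phi(x) = \iint V(x)\,k_c(x,y)\dd\mu_\phi(x)\dd\mu_\psi(y),
\]
after expanding $\psi = H_c[\mu_\psi]$ and applying Fubini. Symmetry of $k_c$ lets me recognise the inner integral in $x$ as $H_c[V\mu_\phi](y) = V^*\phi(y)$ (where $VH_c\inv[\phi] = V\mu_\phi$ is the measure with density $V$ against $\mu_\phi$), giving $\llrrangle{\psi, V^*\phi}_{\mcal H_c}$. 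The anti-symmetry of $W = \frac{2}{\varepsilon}(V-V^*)$ follows by applying the adjoint identity to each of the two terms in $W$: one gets $\llrrangle{\phi, V\psi}_{\mcal H_c} = \llrrangle{\psi, V^*\phi}_{\mcal H_c}$ and $\llrrangle{\phi, V^*\psi}_{\mcal H_c} = \llrrangle{\psi, V\phi}_{\mcal H_c}$ (same Fubini computation with the roles of $\mu_\phi$ and $\mu_\psi$ exchanged), and subtracting yields $\llrrangle{\phi, W\psi}_{\mcal H_c} = -\llrrangle{\psi, W\phi}_{\mcal H_c}$.

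For part \emph{(b)}, set $\mu \coloneqq H_c\inv[b] \in \MpX$. For $\phi \in H_c[\MpX]$, writing $\nu \coloneqq H_c\inv[\phi] \in \MpX$, the condition $\llrrangle{\phi-b, p}_{\mcal H_c}\leq 0$ simply reads $\lrangle{\nu,p} \leq \lrangle{\mu,p}$ for all $\nu \in \MpX$. The inclusion $(\subseteq)$ is immediate: if $p\leq 0$ on $\X$ and $p=0$ on $\supp\mu$, then $\lrangle{\mu,p}=0 \geq \lrangle{\nu,p}$ for any $\nu \in \MpX$. For the reverse inclusion I would argue in two steps: testing against $\nu = \lambda\delta_x$ and letting $\lambda\to+\infty$ forces $p(x)\leq 0$ for every $x\in \X$ (otherwise the LHS is unbounded while the RHS is fixed); then testing against $\nu = 0$ gives $\lrangle{\mu,p}\geq 0$, which combined with $p\leq 0$ and $\mu \geq 0$ forces $p = 0$ $\mu$-a.e., and continuity of $p$ promotes this to $p\equiv 0$ on $\supp\mu$. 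The final identity $\llrrangle{b,p}_{\mcal H_c} = \lrangle{\mu, p} = 0$ for $p\in\tilde Pb$ is then a direct consequence.

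No real obstacle is anticipated: the proposition records two essentially algebraic facts, one stating that $V^*$ is the adjoint of $V$ with respect to the non-symmetric pairing $\llrrangle{\cdot,\cdot}_{\mcal H_c}$, the other identifying $\tilde Pb$ as the polar of $H_c[\MpX]-b$ with respect to that same pairing. The one care-point is to remember that $\llrrangle{\cdot,\cdot}_{\mcal H_c}$ pairs a distribution (first slot) against a continuous function (second slot), so the adjoint identity in \emph{(a)} is a genuine transfer along $k_c$ rather than a symmetry of a Hilbert inner product.
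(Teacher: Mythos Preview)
Your proposal is correct and follows essentially the same approach as the paper. The only cosmetic difference is that for part (a) the paper avoids writing out the double integral and instead chains the definitions abstractly, using at the end that $\llrrangle{\cdot,\cdot}_{\mcal H_c}$ is symmetric on $H_c[\MX]\times H_c[\MX]$ to swap the arguments; your explicit Fubini computation on the symmetric kernel $k_c$ is the same manipulation unfolded, and your treatment of part (b) matches the paper's almost verbatim.
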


\begin{proof}
For the first part we simply follow the definitions:
\begin{equation*}
\llrrangle{\phi, V\psi}_{\mcal H_c} = \lrangle{H_c^{-1}[\phi], V \psi} = \lrangle{ V H_c^{-1}[\phi],  \psi} = \llrrangle{H_c[V H_c^{-1}[\phi]], \psi}_{\mcal H_c} = \llrrangle{V^*\phi, \psi}_{\mcal H_c},
\end{equation*}  
and the conclusion follows as $\llrrangle{\cdot,\cdot}_{\mcal H_c}$ is symmetric on $H_c[\MX] \times H_c[\MX]$. The identity for $W$ follows.

For the second part, with $b = H_c[\nu]$, $\nu \in \mcal M_+(\X)$, recall that $p \in \tilde{P} b$ if and only if $p \leq 0$ and $p = 0$ on $\supp \nu$, which is equivalent to $\lrangle{\nu, p} \geq \lrangle{\sigma, p}$ for any $\sigma \in \mcal M_+(\X)$. The result follows from the definition of $\llrrangle{\cdot, \cdot}_{\mcal H_c}$. The equation $\llrrangle{b, p}_{\mcal H_c} = 0 $ follows by testing $\llrrangle{\phi - b, p}_{\mcal H_c}$ with $\phi= 0$ and $\phi = 2b$.
\end{proof}

\begin{corollary}
\label{crl_monotone}
The operator $b \mapsto W b + \tilde{P} b$ is a multivalued operator from $H_c[\mcal M_+(\X)]$ to $\CX$ which is monotone in the following sense: if $b^i \in H_c[\mcal M_+(\X)]$ and $\phi^i \in W b^i + \tilde{P} b^i$ for $i=1,2$ then 
\begin{equation*}
\llrrangle{ b^1 - b^2, \phi^1 - \phi^2 }_{\mcal H_c} \geq 0. 
\end{equation*}
\end{corollary}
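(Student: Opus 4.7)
The plan is to write $\phi^i = W b^i + p^i$ with $p^i \in \tilde{P} b^i$ and split the quantity
$\llrrangle{b^1 - b^2, \phi^1 - \phi^2}_{\mcal H_c}$ into a linear ($W$) contribution and a pressure contribution, showing that the former is exactly zero and the latter is non-negative. First I observe that all the pairings below are meaningful: since $b^1,b^2 \in H_c[\mcal M_+(\X)] \subset H_c[\MX]$, the difference $b^1 - b^2$ also lies in $H_c[\MX]$ by linearity, so it is a legitimate first argument for $\llrrangle{\cdot,\cdot}_{\mcal H_c}$, while $W(b^1 - b^2)$ and $p^1 - p^2$ lie in $\CX$ and thus are legitimate second arguments.

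For the $W$-contribution I would invoke the antisymmetry relation from \refthmitem{prop:adjoint_VP}{V}, namely $\llrrangle{\phi, W\psi}_{\mcal H_c} = -\llrrangle{\psi, W\phi}_{\mcal H_c}$ for $\phi,\psi \in H_c[\MX]$. Applying it with $\phi = \psi = b^1 - b^2$ and using bilinearity of the pairing gives
\begin{equation*}
\llrrangle{b^1 - b^2, W(b^1 - b^2)}_{\mcal H_c} = 0, \quad \text{hence} \quad \llrrangle{b^1 - b^2, Wb^1 - Wb^2}_{\mcal H_c} = 0.
\end{equation*}

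For the pressure contribution I would use the variational characterization of $\tilde{P}$ from \refthmitem{prop:adjoint_VP}{P}. Since $b^2 \in H_c[\mcal M_+(\X)]$ is an admissible test element for $\tilde{P} b^1$, applying the characterization with $\phi = b^2$ and $p^1 \in \tilde{P} b^1$ yields $\llrrangle{b^2 - b^1, p^1}_{\mcal H_c} \leq 0$, equivalently $\llrrangle{b^1 - b^2, p^1}_{\mcal H_c} \geq 0$. By symmetry, $\llrrangle{b^2 - b^1, p^2}_{\mcal H_c} \geq 0$. Adding these two inequalities and combining with the $W$-identity gives
\begin{equation*}
\llrrangle{b^1 - b^2, \phi^1 - \phi^2}_{\mcal H_c} = \llrrangle{b^1 - b^2, p^1}_{\mcal H_c} + \llrrangle{b^2 - b^1, p^2}_{\mcal H_c} \geq 0,
\end{equation*}
which is the desired monotonicity.

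I do not anticipate a real obstacle: the corollary is essentially bookkeeping once the two statements of \cref{prop:adjoint_VP} are at hand. The only point warranting a brief verification is that the algebraic manipulations respect the asymmetric domain $H_c[\MX] \times \CX$ of the pairing $\llrrangle{\cdot,\cdot}_{\mcal H_c}$, which is handled above by noting that differences of elements of $H_c[\mcal M_+(\X)]$ remain in $H_c[\MX]$ and that $W$ and pressures are valued in $\CX$.
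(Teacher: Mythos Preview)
Your proof is correct and follows essentially the same approach as the paper: split into the $W$-part (zero by the skew identity from \refthmitem{prop:adjoint_VP}{V}) and the pressure part (non-negative by the variational characterization \eqref{eq:defTildeP}), then sum. Your explicit check that the first argument $b^1-b^2$ stays in $H_c[\MX]$ so that the asymmetric pairing $\llrrangle{\cdot,\cdot}_{\mcal H_c}$ is well-defined is a welcome point of care that the paper leaves implicit.
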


\begin{proof}
From \refthmitem{prop:adjoint_VP}{V} we obtain that $\llrrangle{\phi, W \phi}_{\mcal H_c} = 0$ for any $\phi \in H_c[\MX]$. Applying to $\phi=b^1 - b^2$ we have $\llrrangle{ b^1 - b^2, W b^1 - W b^2 }_{\mcal H_c} = 0$.
On the other hand with $p^i \in \tilde{P} b^i$ for $i=1,2$, from~\eqref{eq:defTildeP} we have $\llrrangle{ b^1 - b^2, p^1}_{\mcal H_c} \geq 0$ and $\llrrangle{ b^2 - b^1, p^2}_{\mcal H_c} \geq 0$. Summing these three estimates yields the result. 
\end{proof}

\begin{proposition}
\label{prop:continuous_WP}
When restricted to $\mcal B$ the map $W : \mcal B \to \CX$ is continuous. Moreover, the graph of $\tilde{P}$, that is, $\{ (b,p) \ : \ b \in \mcal B \text{ and } p \in \tilde{P} b \}$, as a subset of $\mcal H_c \times \CX$, is closed.   
\end{proposition}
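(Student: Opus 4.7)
The plan is to reduce both claims to the homeomorphism property of $B$ (\cref{thm:embeddingB}) together with two basic continuity facts. First, since $k_c$ is continuous on the compact set $\X\times\X$, convergence $b_n\to b$ in $\mcal H_c$ implies uniform convergence on $\X$ via the reproducing property $|b_n(y)-b(y)|\leq\|b_n-b\|_{\mcal H_c}\sqrt{k_c(y,y)}$. Second, combining this with \cref{thm:embeddingB}, if $b_n\to b$ in $\mcal B$ and $\mu_n=B^{-1}(b_n)$, $\mu=B^{-1}(b)$, the identity $H_c\inv[b_n]=\mu_n/b_n$ (from \eqref{eq:expression_Binv}) shows that $\nu_n\coloneqq H_c\inv[b_n]$ converges weakly-\txtstar{} to $\nu\coloneqq H_c\inv[b]$ in $\MpX$: indeed $b_n\to b$ uniformly with $\inf_n\inf_\X b_n>0$ (since $b>0$ on the compact $\X$), so $1/b_n\to 1/b$ uniformly, and $\mu_n\to\mu$ weakly-\txtstar{} by \cref{thm:embeddingB}.

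For the continuity of $W=\tfrac{2}{\varepsilon}(V-V^*)$ I would treat the two pieces separately. The multiplication map $b\mapsto Vb$ from $\mcal H_c$ to $\CX$ is continuous by the first observation above. For $V^*b_n=H_c[VH_c\inv[b_n]]=H_c[V\nu_n]$, the preliminary observation gives $V\nu_n\to V\nu$ weakly-\txtstar{} in $\MX$, hence pointwise convergence of $H_c[V\nu_n](y)$ for every $y$. Uniform convergence then follows from Arzelà–Ascoli, using that $k_c$ is uniformly continuous on $\X\times\X$ and the masses $|V\nu_n|(\X)$ are uniformly bounded, which yields equicontinuity of the family $\{H_c[V\nu_n]\}_n$.

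For the closedness of the graph of $\tilde P$, take $(b_n,p_n)\in\mcal B\times\CX$ with $b_n\to b$ in $\mcal H_c$ and $p_n\to p$ uniformly, and $p_n\in\tilde Pb_n$. Since $\mcal B$ is compact in $\mcal H_c$ (\cref{thm:embeddingB}), $b\in\mcal B$. The pointwise inequality $p_n\leq 0$ passes to the uniform limit to give $p\leq 0$. For the support condition, I would use that $p_n\leq 0$ and $p_n=0$ on $\supp\nu_n$ are jointly equivalent to $p_n\leq 0$ together with the scalar identity $\int p_n\,\dd\nu_n=0$, i.e. $\llrrangle{b_n,p_n}_{\mcal H_c}=0$ (\refthmitem{prop:adjoint_VP}{P}). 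Combining the weak-\txtstar{} convergence $\nu_n\to\nu$ with the uniform convergence $p_n\to p$ passes this identity to the limit, yielding $\int p\,\dd\nu=0$. Since $p\leq 0$ is continuous and $\nu\geq 0$, this forces $p=0$ on $\supp\nu$, hence $p\in\tilde Pb$.

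The main technical obstacle is that $H_c\inv$ is not a bounded operator on all of $\mcal H_c$: it is defined only on the non-closed subspace $H_c[\MX]$ and is generally unbounded there, which reflects the instability mentioned in \cref{rm:ill_posed}. The key point making the proof work is that, once restricted to $\mcal B$, the strict positivity and uniform equicontinuity available on $\mcal B$ turn $b\mapsto H_c\inv[b]$ into a continuous map from $(\mcal B,\|\cdot\|_{\mcal H_c})$ to $(\MX,\text{weak-\txtstar})$, which is precisely the level of regularity needed for both statements.
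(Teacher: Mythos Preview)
Your proof is correct and follows the same overall skeleton as the paper's: establish that $b\mapsto H_c^{-1}[b]$ is norm-to-weak-\txtstar{} continuous on $\mcal B$, then use this to handle both $V^*$ and the pressure condition $\langle H_c^{-1}[b_n],p_n\rangle=0$. The differences are in how the two technical ingredients are obtained. For the continuity of $H_c^{-1}$ on $\mcal B$, the paper invokes \cref{lemma:Hkc0}\ref{lemma:Hkc0:Hkinv}, which holds on any bounded subset of $\MX$ and ultimately rests on universality of the kernel; you instead exploit the explicit formula $H_c^{-1}[b]=B^{-1}(b)/b$ together with the homeomorphism property of $B$ and the uniform positive lower bound on $b$. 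For the continuity of $H_c$ in the $V^*$ term, the paper cites \cref{lemma:Hkc0}\ref{lemma:Hkc0:Hk} (weak-\txtstar{}-to-norm continuity into $\mcal H_c$), whereas you argue directly in $\CX$ via Arzel\`a--Ascoli. Your route is more self-contained and avoids the appendix lemmas, at the price of being slightly longer; the paper's route is terser and yields the stronger conclusion that $V^*b_n\to V^*b$ in $\mcal H_c$ rather than just in $\CX$, though only the latter is needed here.
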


\begin{proof}
Continuity of $V$ is immediate since convergence in $\mcal H_c$ implies convergence in $\CX$ (\cref{lemma:infnorm<Hcnorm}) and the product is jointly continuous for continuous functions. Regarding $V^* = H_c V H_c^{-1}$, \cref{lemma:Hkc0} yields that $H_c^{-1}$ is norm-to-weak-\txtstar{} continuous when restricted to $\mcal B$ as $H_c^{-1}[\mcal B]$ is a bounded subset of $\MX$, multiplication by $V$ is weak-\txtstar{}-to-weak-\txtstar{} continuous, and $H_c$ is weak-\txtstar{}-to-norm continuous, see again \cref{lemma:Hkc0}. We conclude that $W$ is continuous.

Regarding $\tilde{P}$, assume that $b_n \to b$ in $\mcal H_c$ and $p_n \to p$ in $\CX$ with $p_n \in \tilde{P} b_n$ for any $n$. Clearly $p \leq 0$ as $p_n \leq 0$ for any $n$. Moreover, using \cref{lemma:Hkc0} again we have that $H_c^{-1}[b_n]$ converges weakly-\txtstar{} to $H_c^{-1}[b]$. The condition $p_n = 0$ on $\mathrm{supp}(H_c^{-1}[b_n])$, namely $\lrangle{H_c^{-1}[b_n],p_n} = 0$, can be passed to the limit $n \to + \infty$, giving $p = 0$ on $\mathrm{supp}(H_c^{-1}[b])$. Thus $p \in \tilde{P}b$. 
\end{proof}

\begin{remark}[A maximal monotone evolution?]
\label{rmk:maximal_monotone}
Up to the problem that $W$ is only defined on $H_c[\MX] \subseteq \mcal H_c$ and valued in $\CX \supseteq \mcal H_c$, we see that formally $W$ is skew-symmetric with respect to $\lrangle{\cdot, \cdot}_{\mcal H_c}$. By the Stone theorem \cite[Theorem 1.10.8]{Pazy1983} the unconstrained ODE described by \(\Dot b_t + Wb_t = 0\) has the form of a rotational motion. Regarding the second term $\tilde{P} b$, \refthmitem{prop:adjoint_VP}{P} actually characterizes it as the closure in $\CX$ of the subdifferential at $b$ of the $0/+\infty$ indicator of $H_c[\mcal M_+ (\X)]$. The structure ``linear skew-symmetric $+$ subdifferential of a convex function'' of the inclusion~\eqref{eq:b_flat} enables to interpret the flow as a ``constrained rotation'', as illustrated \cref{fig:sphere} below. It also yields that the operator $b \mapsto Wb + \tilde{P} b$ is monotone, but not the subdifferential of a convex function, as $b \mapsto Wb$ is not a gradient. As pointed out to us by Yann Brenier, the structure ``linear skew-symmetric $+$ subdifferential of a convex function'' of the evolution~\eqref{eq:b_flat}, though atypical, is not new and has been used in the analysis of conservation laws \cite{Brenier2009,Perepelitsa2014} and vibrating strings \cite{Brenier2004}. 

If $b \mapsto Wb + \tilde{P} b$ were maximal monotone, then existence of a solution to the Sinkhorn potential flow would follow from the well established theory of maximal monotone evolutions \cite{brezis1973}. However monotonicity is only proved with respect to $\llrrangle{\cdot,\cdot}_{\mcal H_c}$ and not $\lrangle{\cdot,\cdot}_{\mcal H_c}$, and $W + \tilde{P}$ is valued in $\CX$ which is larger than $\mcal H_c$. In Remark~\ref{rmk:needCX}, we show on an explicit example that it is not possible to prove existence of a solution only by working in the Hilbert space $\mcal H_c$, indicating that we need more ingredients than just the theory of maximal monotone operators.
\end{remark}

\begin{figure}[h]
\centering
\includegraphics[width=.6\textwidth]{./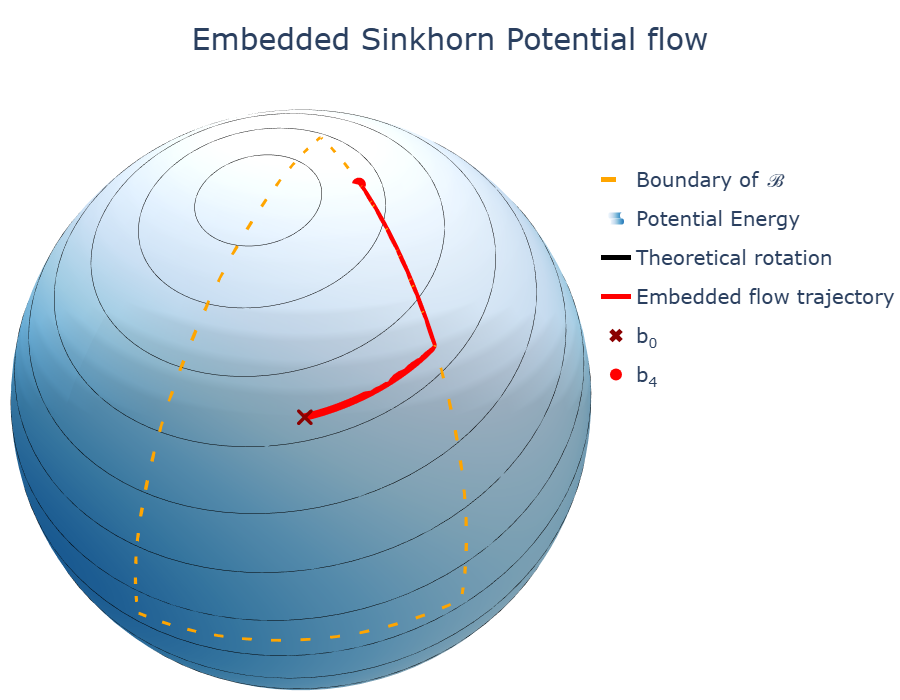}
\caption{Numerical simulation (see \cref{sec:numerics} for details of the implementation) of the Sinkhorn potential flow on a 3-point space embedded on the RKHS sphere, illustrating the structure of constrained rotation. The white to blue heat map corresponds to the potential energy \(\Tilde E\).}
\label{fig:sphere}
\end{figure}

\subsection{The gradient flow structure of the flow}

We started from a gradient flow but the equation~\eqref{eq:b_flat} seems to no longer have a gradient flow structure (\cref{rmk:maximal_monotone}). This is because we also changed the geometry: when looking at~\eqref{eq:b_flat} we use the flat geometry of $\mcal H_c$, whereas the Sinkhorn geometry is described in $\mcal H_c$ by the metric tensor $\tilde{\mathbf{g}}_b(\cdot, \cdot)$ as defined in~\eqref{eq:def:~g}. Though it is not directly needed for the sequel, we explain how we can also recover, at least formally, a gradient flow structure in the $b$-variable. 

The first observation is that~\eqref{eq:~E}, together with the identity~\eqref{eq:V*}, yields $\Tilde E(b) = \frac{1}{2} \llrrangle{b, (V+V^*)b}_{\mcal H_c}$, so that at least formally the gradient of $E$ in the flat geometry of $\mcal H_c$ should read
\begin{equation*}
\nabla_{\mcal H_c} \Tilde E(b) = (V+V^*)b. 
\end{equation*}
Note that $(V + V^*) b$ has no reason to be an element of $\mcal H_c$, it is a mere continuous function. Thus a priori $\tilde{E}$ cannot be differentiable in $\mcal H_c$.

The second observation is a rewriting of the metric tensor $\tilde{\mathbf{g}}_b(\cdot,\cdot)$. With $b = B(\mu)$, if we define \(\Tilde K_\mu: \mcal H_c \to \mcal H_c\) by \(\Tilde K_\mu [\phi] \coloneqq bK_\mu[b^{-1}\phi]\) then we claim that
\begin{equation*}
\Tilde{\mathbf g}_b\brac{\Dot b, \Dot b} = \langle\Dot b, \tilde{G}_b[\Dot b] \rangle_{\mcal H_c},
\end{equation*}
where \(\Tilde G_b: b^\perp \to b^\perp\) is given as
\begin{equation*}
\forall \Dot b \in b^\perp, \quad \Tilde G_b\sqbrac{\Dot b} \coloneqq \frac\varepsilon2(\id +\Tilde K_\mu) (\id - \Tilde K_\mu)\inv\sqbrac{\Dot b}. 
\end{equation*}
This comes from~\eqref{eq:def:~g}, using the identity 
\begin{equation*}
\langle b^{-1}  \dot{b}, (\id - K_\mu)^{-1} [b^{-1} \dot{b}] \rangle_{L^2(\X,\mu)} = \langle b^{-1} (\id - \tilde{K}_\mu)^{-1} [\dot{b}]\mu, b^{-1}  \dot{b} \rangle = \langle \dot{b}, \tilde{K}_\mu (\id - \tilde{K}_\mu)^{-1} [ \dot{b}] \rangle_{\mcal H_c}, 
\end{equation*}
(recall Proposition~\ref{prop:isometry_RKHS} for the last equality) as well as $\id + 2\tilde{K}_\mu (\id - \tilde{K}_\mu)^{-1} = (\id +\Tilde K_\mu) (\id - \Tilde K_\mu)\inv$. If we multiply the equation~\eqref{eq:b_flat} by $\tilde{G}_{b_t}$, we obtain after further algebraic simplifications that~\eqref{eq:b_flat}, and thus~\eqref{eq:sinflow_mu}, are equivalent to the evolution equation
\begin{equation*}
\tilde{G}_{b_t}[\dot{b}_t] + (V+V^*) b_t + \tilde{P} b_t + \R b_t  \ni 0.
\end{equation*}
This equation is the equation of a constrained gradient flow on the sub-Riemannian manifold $\mcal B$ endowed with the metric tensor $\tilde{\mathbf{g}}_b(\cdot,\cdot)$. To see this, we refer to Remark~\ref{rmk:GF_M}: if we multiply this equation by a tangent vector $z \in \mcal H_c$ which preserves the constraint $\mcal B$, that is, $z \in b_t^\perp$ and $z \in H_c[\MX]$ with $H_c^{-1}[z] \geq 0$ outside of $\mathrm{supp}(H_c^{-1}[b_t])$, we obtain for any such $z$ 
\begin{equation*}
\tilde{\mathbf{g}}_{b_t}(z,\dot{b}_t) + D \tilde{E}(b_t)(z) \geq 0.
\end{equation*}
This is exactly the weak form of the equation of a constrained gradient flow~\eqref{eq:GF_R_projected}.

\subsection{Example: the flow of a single Dirac mass}

To conclude this section, let us compute the flow of a single particle, i.e., when $\mu_t$ is concentrated on a single Dirac mass. This part is not needed to understand the proof of \cref{thm:main} and can be skipped at first reading. For a function $V$, the notation $\partial V$ stands for the subdifferential in the sense of convex analysis, which may be empty if $V$ is not convex. 

\begin{proposition}[Flow of a Dirac mass]
\label{prop:flow_dirac}
Let \(\mcal X\) be a compact convex subset of \(\R^d\), and \(c: (x, y) \mapsto \norm{x-y}^2\) be the square Euclidean distance. Take an absolutely continuous curve \((x_t)_{t \geq 0} \subset \mcal X\), let \(\mu_t\coloneqq \delta_{x_t}\) and \( (b_t)_{t \geq 0} \coloneqq (B(\mu_t))_{t \geq 0}\) the corresponding curve on \(\mcal B\). 

Then $(b_t)_{t \geq 0}$ is absolutely continuous in $\mcal H_c$ and for any \(t\) when $\dot{x}_t$ and $\dot{b}_t$ exist,
\begin{equation}\label{eq:diracflow}
\Dot b_t + Wb_t + \tilde{P} b_t \ni 0 \quad \Leftrightarrow \quad \Dot x_t \in -\partial V(x_t).
\end{equation}
In particular, for \(V\) convex and \(\bar{\mu}_0 = \delta_{\bar x_0}\), the Sinkhorn potential flow  \((\mu_t)_{t \geq 0}\) of \(V \) starting at \(\bar{\mu}_0\) can be written \(\mu_t = \delta_{x_t}\) with \((x_t)_{t \geq 0}\) the unique gradient flow of \(V\) starting at \(\bar x_0\).
\end{proposition}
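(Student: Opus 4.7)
The plan is to compute $B(\delta_x)$ explicitly, reduce the flat inclusion \eqref{eq:b_flat} to a pointwise identity on $\mcal X$, and recognize the resulting sign condition as the subgradient inequality for $V$ at $x_t$. Writing the Schrödinger system~\eqref{eq:schrödingersystem} for $\mu = \nu = \delta_x$, the Sinkhorn map $T_\varepsilon$ degenerates to a single-point evaluation and yields $f_{\delta_x}(y) = c(x,y) - f_{\delta_x}(x)$; setting $y = x$ and using $c(x,x) = 0$ forces $f_{\delta_x}(x) = 0$, hence $f_{\delta_x}(y) = \|x - y\|^2$ and $B(\delta_x) = k_c(x,\cdot)$. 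The alternative representation $B(\mu) = H_c[e^{f_\mu/\varepsilon}\mu]$ in~\eqref{eq:Bdef} also gives the useful identity $H_c^{-1}[B(\delta_x)] = \delta_x$.

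From the elementary bound
\[
\|k_c(x,\cdot) - k_c(x',\cdot)\|_{\mcal H_c}^2 \;=\; 2\bigl(1 - e^{-\|x-x'\|^2/\varepsilon}\bigr) \;\leq\; \tfrac{2}{\varepsilon}\|x-x'\|^2,
\]
I would deduce that $x \mapsto k_c(x,\cdot)$ is Lipschitz from $\mcal X$ to $\mcal H_c$, so that $(x_t)_t$ absolutely continuous implies $(b_t)_t$ absolutely continuous in $\mcal H_c$, and a Fréchet chain rule yields $\dot{b}_t(y) = -\tfrac{2}{\varepsilon}(x_t - y)\cdot \dot{x}_t\; k_c(x_t,y)$ at every $t$ where $\dot{x}_t$ exists. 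Using $H_c^{-1}[b_t] = \delta_{x_t}$ one then computes $V^* b_t = H_c[V(x_t)\delta_{x_t}] = V(x_t)\, k_c(x_t,\cdot)$, so
\[
Wb_t(y) \;=\; \tfrac{2}{\varepsilon}\bigl(V(y) - V(x_t)\bigr)\, k_c(x_t,y),
\]
while $\mathrm{supp}(H_c^{-1}[b_t]) = \{x_t\}$ gives $\tilde P b_t = \{p \in \CX : p \leq 0,\; p(x_t) = 0\}$.

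Substituting these expressions into $\dot b_t + Wb_t + p = 0$ and dividing through by the strictly positive $k_c(x_t,\cdot)$ rearranges to, for every $y \in \mcal X$,
\[
\tfrac{2}{\varepsilon}\bigl[\,V(y) - V(x_t) - \langle \dot{x}_t, x_t - y\rangle\,\bigr] \;=\; -\frac{p(y)}{k_c(x_t,y)}.
\]
The requirement $p \leq 0$ with equality $p(x_t) = 0$ is then exactly the subgradient inequality $V(y) \geq V(x_t) + \langle -\dot{x}_t,\, y - x_t\rangle$ for all $y \in \mcal X$, which is $\dot{x}_t \in -\partial V(x_t)$. Conversely, any such $\dot{x}_t$ defines a valid pressure $p \in \tilde P b_t$ by the displayed formula, yielding the equivalence~\eqref{eq:diracflow}.

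For the final assertion, when $V$ is convex the extension $V + I_{\mcal X}$ is a proper l.s.c.\ convex function on $\R^d$; the Brezis theory of gradient flows for maximal monotone operators provides a unique absolutely continuous $(x_t)_{t\geq 0} \subset \mcal X$ with $x_0 = \bar{x}_0$ satisfying $\dot{x}_t \in -\partial V(x_t)$ a.e., with $\dot{x}_t \in L^2_\text{loc}$. By the first part, $(\delta_{x_t})_t$ is then a Sinkhorn potential flow starting at $\delta_{\bar{x}_0}$, and the uniqueness statement of \cref{thm:main} identifies it with the unique such flow. The main bookkeeping obstacles I anticipate are the Fréchet differentiability of the feature map $x \mapsto k_c(x,\cdot)$ in $\mcal H_c$ and the sign tracking when dividing by $k_c(x_t,\cdot)$; the algebra is otherwise forced by the explicit formula $B(\delta_x) = k_c(x,\cdot)$.
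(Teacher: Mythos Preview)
Your proof is correct and follows essentially the same route as the paper: explicit computation of $B(\delta_x)=k_c(x,\cdot)$ from the Schr\"odinger system, the same Lipschitz bound $\|k_c(x,\cdot)-k_c(x',\cdot)\|_{\mcal H_c}^2\leq \tfrac{2}{\varepsilon}\|x-x'\|^2$, the same formulas for $\dot b_t$ and $V^*b_t$, and the same recognition that the sign condition on the pressure is exactly the subgradient inequality for $V$ at $x_t$. Your treatment of the convex case via Brezis' theory and uniqueness from \cref{thm:main} is slightly more explicit than the paper's one-line ``immediate consequence,'' but the argument is identical in substance.
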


Thus for a convex \(V\) and a Dirac mass as initial condition, the Sinkhorn and Wasserstein gradient flows of the potential energy coïncide. However, when \(V\) is non-convex, as soon as the particle following the Sinkhorn flow reaches a point where the subdifferential of $V$ in the sense of convex analysis is empty, its mass will split since it cannot keep being a Dirac mass. It is confirmed by our numerical experiments, see \cref{sec:numerics}. 

\begin{proof}
Recall that $k_c = \exp(-c/\varepsilon)$.
We compute easily $b_t = k_c(x_t,\cdot)$ by checking that it solves the Schrödinger system~\eqref{eq:schrödingersystem}. In particular from the reproducing property $\| b_t - b_s \|^2_{\mcal H_c} = 2 - 2 \exp(-\| x_t - x_s \|^2/\varepsilon) \leq 2 \| x_t - x_s \|^2 / \varepsilon$. Thus $(b_t)_{t \geq 0}$ is an absolutely continuous curve in $\mcal H_c$ if $(x_t)_{t \geq 0}$ is one in $\R^d$.

Consequently, one can compute, at any point where $\dot{x}_t$ and $\dot{b}_t$ exist, that \(\Dot b_t(y) = \frac2\varepsilon\lrangle{\Dot x_t, y-x_t}_{\R^d}b_t(y)\). Additionally, since \(H_c\inv [b_t]\) is supported on \(\cbrac{x_t}\), we have $V^*b_t = V(x_t)b_t$
and thus
\begin{equation*}
\Dot b_t(y) + \frac2\varepsilon [(V-V^*)b_t](y) = \frac2\varepsilon\brac{V(y) - V(x_t) + \lrangle{\Dot x_t, y-x_t}_{\R^d}}b_t(y).
\end{equation*}
The right hand side is evidently 0 for \(y=x_t\), and it is non-negative for all \(y\) if and only if \(-\Dot x_t\) is in the subdifferential of \(V\) at $x_t$, giving \eqref{eq:diracflow}. The rest of the statement is an immediate consequence.
\end{proof}

\begin{remark}
\label{rmk:needCX}
As seen in the proof above, along the evolution and for a convex $V$ we do not have $V b_t \in \mcal H_c$, only $V b_t \in \CX$. Thus, if we were to define $W$ only on $\mcal H_c$ as an unbounded operator with domain given by the set of $b$'s such that $Wb \in \mcal H_c$, we would not have $b_t \in \mathrm{dom}(W)$. It shows that the evolution~\eqref{eq:b_flat} cannot be analyzed only in $\mcal H_c$, and that we must indeed formulate it in $\CX$.    
\end{remark}

\section{Proof of the main theorem}
\label{sec:proofs}

In this section we prove \cref{thm:main} using the structure we explored in the previous section. In particular, we prefer to work with the equation~\eqref{eq:b_flat} (in the variable $b$) which is equivalent to the definition~\eqref{eq:sinflow_mu} (in the variable $\mu$), see \cref{thm:change_flow_b}. 

The main difficulty is the existence of a solution: to obtain it we start with the case of finite space, where we can utilize the theory of maximal monotone operators. Then we reason by approximating the space $\X$ with a finite space. The proof of non-expansiveness on the other hand, which we do before the one of existence, is immediate from the monotonicity of the evolution in the $b$-variable. In particular it implies uniqueness for a fixed initial condition. The long term behavior is obtained by a qualitative argument, by showing that the only critical points are global minima.

\subsection{Non-expansiveness}

\begin{proof}[\textbf{Proof of \refthmitem{thm:main}{contractivity}}]
For the non-expansiveness, with $b^i_t = B(\mu_t^i)$ and $p^i_t \in \tilde{P} b^i$ the pressure curve corresponding to each Sinkhorn potential flow for $i=1,2$,
the chain rule gives
\begin{equation*}
\frac{\dd}{\dd t}\norm{b^1_t-b^2_t}_{\mcal H_c}^2 = 2 \lrangle{b^1_t - b^2_t, \Dot b^1_t - \Dot b^2_t}_{\mcal H_c} = -2 \llrrangle{ b^1_t - b^2_t, W(b^1_t - b^2_t) + p_t^1 - p_t^2}_{\mcal H_c}.
\end{equation*}
Corollary~\ref{crl_monotone} guarantees that the right hand side is non-positive.
\end{proof}

\subsection{Existence on a finite space}

In this subsection, except for \cref{lm:energy_dissipation}, we consider that $\X$ is a finite space, namely
\begin{equation}\label{eq:discreteX}
\X = \{x_1, \dots, x_n\}.
\end{equation}
We refer to Remark~\ref{rem:finiteX} for comments on the importance of this assumption.

\begin{theorem}\label{thm:existenceuniqueness_finite}
Assuming \eqref{eq:discreteX}, for any \(\Bar b_0\in \mcal B\) there exists a Sinkhorn potential flow starting at \(\Bar b_0\). 
\end{theorem}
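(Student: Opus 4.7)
The plan is to reduce \eqref{eq:b_flat} to a classical maximal monotone evolution in a finite-dimensional Hilbert space. By \cref{thm:change_flow_b} it suffices to build $(b_t)_{t\geq 0}\in\mathscr H^1_\text{loc}([0,+\infty); \mcal H_c)$ valued in $\mcal B$ with $b_0=\bar b_0$ and satisfying the differential inclusion $\dot b_t + Wb_t + \tilde P b_t \ni 0$ for a.e. $t$. Following \cref{rem:finiteX}, I identify $\mcal H_c = \CX = \MX = \R^n$. In particular $H_c[\MX]=\mcal H_c$, so $\llrrangle{\cdot,\cdot}_{\mcal H_c}$ is defined on all of $\mcal H_c\times \mcal H_c$ and coincides there with $\lrangle{\cdot,\cdot}_{\mcal H_c}$.

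Next I would verify that $A \coloneqq W + \tilde P$ is maximal monotone on the Hilbert space $(\mcal H_c, \lrangle{\cdot,\cdot}_{\mcal H_c})$ with domain $C\coloneqq H_c[\MpX]$, a closed convex cone. By \refthmitem{prop:adjoint_VP}{V}, $W$ is bounded linear on $\mcal H_c$ and skew-adjoint for $\lrangle{\cdot,\cdot}_{\mcal H_c}$, hence monotone and everywhere defined, so maximal monotone. By \refthmitem{prop:adjoint_VP}{P}, the multivalued map $\tilde P$ coincides with the subdifferential of the indicator $\iota_C$, so it is maximal monotone. As $W$ is Lipschitz with full domain, the classical sum rule (see \cite{brezis1973}) gives that $A$ is maximal monotone.

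The Brezis--Komura theorem then yields, for the initial datum $\bar b_0 \in \mcal B \subset C$, a unique Lipschitz curve $(b_t)_{t \geq 0}$ valued in $C$ with $b_0 = \bar b_0$ and $\dot b_t + A b_t \ni 0$ for a.e.\ $t$. It remains to check that $b_t\in \mcal B$, i.e. $\snorm{b_t}_{\mcal H_c}=1$. Writing $\dot b_t = -W b_t - p_t$ with $p_t \in \tilde P b_t$, and using skew-symmetry of $W$ and \refthmitem{prop:adjoint_VP}{P},
\begin{equation*}
\frac{\dd}{\dd t}\norm{b_t}_{\mcal H_c}^2 = 2\lrangle{b_t,\dot b_t}_{\mcal H_c} = -2\llrrangle{b_t, Wb_t}_{\mcal H_c} - 2\llrrangle{b_t, p_t}_{\mcal H_c} = 0,
\end{equation*}
so $\snorm{b_t}_{\mcal H_c}=\snorm{\bar b_0}_{\mcal H_c}=1$ and $b_t \in \mcal B$ for all $t\geq 0$. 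Translating back via \cref{thm:change_flow_b} gives the desired Sinkhorn potential flow.

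In this finite setting no substantial obstacle is expected: the proof is really a verification that the ingredients fit the monotone operator framework, with the unit-norm constraint automatically preserved by the algebraic structure of the flow. What fails outside the finite case is precisely the identification $H_c[\MX] = \mcal H_c$: as recorded in \cref{rmk:maximal_monotone,rmk:needCX}, $W$ is then only defined on the proper subspace $H_c[\MX]$ and is valued in $\CX \supsetneq \mcal H_c$, so one cannot view the dynamics as a monotone evolution inside $\mcal H_c$ and a different approach (e.g. approximation by finite spaces) will be required in general.
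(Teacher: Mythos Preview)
Your proposal is correct and follows essentially the same approach as the paper: identify $A=W+\tilde P$ as the sum of a skew-adjoint bounded operator and the subdifferential of the indicator of $H_c[\MpX]$, invoke a standard sum rule to get maximal monotonicity, apply the existence theorem for maximal monotone evolutions, and recover $\snorm{b_t}_{\mcal H_c}=1$ from skew-symmetry of $W$ together with $\llrrangle{b_t,p_t}_{\mcal H_c}=0$. The only cosmetic difference is that the paper cites Rockafellar's sum theorem via the interior-of-domain condition and calls the existence result Hille--Yosida, whereas you use the (equally valid) Lipschitz-plus-maximal-monotone sum rule and the name Brezis--Komura.
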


\begin{proof}
Recalling that the evolution is $\dot{b}_t + (W+\tilde{P})b_t \ni 0$, we prove that the multivalued map \(W+\tilde{P}\) is maximal monotone. Thanks to the finiteness of $\X$, we have $\CX = H_c[\MX] = \mcal H_c = \R^\X$, so that $W$ is linear and defined everywhere. By \eqref{eq:V*} we see that $W$ is a skew-symmetric continuous operator on the finite dimensional Hilbert space $\mcal H_c$. Moreover, by \eqref{eq:defTildeP} $\tilde{P} b$ is the subdifferential of a proper convex function, the one equal to $0$ on $H_c[\MpX]$ and $+ \infty$ elsewhere. Thus \(W+\tilde{P}\) is the sum of two maximal monotone operators. Since \(\interior{\dom{W}}\cap \mathrm{dom}(\tilde{P}) =  \mathrm{dom}(\tilde{P})\), \cite[Theorem 1 (a)]{rockafellarsumofmm} ensures that the sum \(W + \tilde{P}\) is maximal monotone.  

Applying the Hille-Yosida theorem (\cite{brezis1973}, see e.g. \cite[Theorem 2.7]{peypouquet2010} for a version in English) gives an absolutely continuous curve \((b_t)_{t \geq 0}\) verifying \(\Dot b_t + Wb_t + \tilde{P}b_t \ni 0\) for a.e. \(t\), valued in \(\mathrm{dom}(\tilde{P}) = H_c[\mcal M_+(\X)]\), and with bounded derivative (in particular $\dot{b} \in L^2_\text{loc}([0, +\infty); \mcal H_c)$).
It is also valued in the sphere of \(\mcal H_c\): indeed \(\lrangle{\Dot b_t, b_t}_{\mcal H_c} = \lrangle{Wb_t + p_t, b_t}_{\mcal H_c} = 0\) (see \cref{prop:adjoint_VP}), giving \(\norm{b_t}_{\mcal H_c}= \norm{b_0}_{\mcal H_c} = 1\) for all $t$. Thus $b_t \in \mcal B$ for all $t$. 
% We also obtain \((b_t)_{t \geq 0}\in \mathscr H^1_\text{loc}([0, +\infty); \mcal H_c)\) by observing that this absolutely continuous curve and its derivative are bounded and thus in \(L^2([0,T]; \mcal H_c)\) for all \(T>0\).
\end{proof}

Next we deduce in this case the dissipation of the energy, which will be useful to obtain a priori estimates. We first state a lemma which is valid even without the finiteness assumption on $\X$.

\begin{lemma}
\label{lm:energy_dissipation}
Let $(\mu_t)_{t \geq 0}$ be a Sinkhorn potential flow of $V$, then for a.e. $t$ such that $(E(\mu_t))_{t \geq 0}$ is differentiable at $t$,
\begin{equation*}
\frac{\dd}{\dd t} E(\mu_t) = - \mathbf{g}_{\mu_t}(\dot{\mu}_t,\dot{\mu}_t).
\end{equation*}
\end{lemma}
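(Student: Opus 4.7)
The plan is to pair the flow equation against $\dot{\mu}_t$, at any time $t$ where $\dot{\mu}_t \in \tgtspace{\mu_t}$ exists (which holds for a.e.\ $t$ by \cref{prop:change_mudot_bdot,prop:equivalence_distance_curves}) and where $E(\mu_t)$ is differentiable. From \cref{def:sinflow} we obtain $p_t \in P\mu_t$ and a constant $C_t \in \R$ such that, in $\CX$,
\begin{equation*}
\frac{\varepsilon}{2}(\id - K_{\mu_t}^2)\inv H_{\mu_t}[\dot{\mu}_t] + V + p_t = C_t.
\end{equation*}
The leftmost term belongs to $\mcal H_{\mu_t}$ since $H_{\mu_t}[\dot{\mu}_t] \in \mcal H_{\mu_t}$ and by \cref{prop:analytic_HK_mu} the operator $(\id - K_{\mu_t}^2)\inv$ is bounded on $\mcal H_{\mu_t}/\R$. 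Consequently $V + p_t \in \mcal H_{\mu_t} + \R$, \emph{even though} neither $V$ nor $p_t$ need lie individually in $\mcal H_{\mu_t}$. This asymmetry is the main subtlety of the proof: we will not be able to split the pairing as $\lrangle{\dot\mu_t,V} + \lrangle{\dot\mu_t,p_t}$ through RKHS duality, so we must argue on the combined quantity and extract the pressure contribution afterwards.

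Pairing the displayed identity with $\dot{\mu}_t$: the constant $C_t$ drops out because $\dot{\mu}_t$ has zero mass, while by the definition~\eqref{eq:g_mu} of the metric tensor the first term contributes $\mathbf{g}_{\mu_t}(\dot{\mu}_t, \dot{\mu}_t)$. This yields
\begin{equation*}
\lrangle{\dot{\mu}_t, V + p_t} = -\mathbf{g}_{\mu_t}(\dot{\mu}_t, \dot{\mu}_t).
\end{equation*}

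It remains to match the left-hand side with $\frac{\dd}{\dd t} E(\mu_t)$. Freezing $p_t$ and using that $\int p_t\,\dd\mu_t = 0$ since $p_t \equiv 0$ on $\supp{\mu_t}$, we split, for $h$ close to $0$,
\begin{equation*}
\frac{\lrangle{\mu_{t+h} - \mu_t, V + p_t}}{h} = \frac{E(\mu_{t+h}) - E(\mu_t)}{h} + \frac{1}{h}\int p_t\,\dd\mu_{t+h}.
\end{equation*}
The left-hand side converges to $\lrangle{\dot{\mu}_t, V+p_t}$ as $h \to 0$ (by definition of $\dot{\mu}_t$ applied to $V+p_t \in \mcal H_{\mu_t} + \R$), and the first term on the right-hand side converges to $\frac{\dd}{\dd t} E(\mu_t)$ by hypothesis. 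Hence $h^{-1}\int p_t\,\dd\mu_{t+h}$ also admits a limit at $h=0$; but since $p_t \leq 0$, this quotient is nonpositive for $h > 0$ and nonnegative for $h < 0$, forcing its limit to be $0$. Combining with the previous display yields the asserted energy dissipation identity.
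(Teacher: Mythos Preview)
Your proof is correct and follows essentially the same approach as the paper's: both pair the flow equation against difference quotients $(\mu_{t+h}-\mu_t)/h$, identify the metric-tensor and energy terms, and use the sign of $\int p_t\,\dd\mu_{t+h}$ on either side of $h=0$ to kill the pressure contribution. Your explicit observation that $V+p_t\in\mcal H_{\mu_t}+\R$ (so that the pairing $\lrangle{\dot\mu_t,V+p_t}$ is meaningful even though neither $V$ nor $p_t$ lies in $\mcal H_{\mu_t}$) is a nice clarification that the paper leaves implicit.
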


\begin{proof}
For a.e. $t$ by definition $(\mu_t)_{t \geq 0}$ is differentiable in $\tgtspace{\mu_t}$ at $t$ and satisfies~\eqref{eq:sinflow_mu}. Fix such an instant $t$ where in addition $(E(\mu_t))_{t \geq 0}$ is differentiable. For $s \neq t$ we can pair~\eqref{eq:sinflow_mu} with $(\mu_s - \mu_t) / (s-t)$. Calling $p_t \in P \mu_t$ the pressure at time $t$, as $E$ is linear and $\lrangle{\mu_t,p_t} = 0$, we obtain  
\begin{equation*}
\mathbf{g}_{\mu_t} \left( \frac{\mu_s - \mu_t}{s-t}, \dot{\mu}_t \right) + \frac{E(\mu_s) - E(\mu_t)}{s-t} = -\frac{1}{s-t} \lrangle{\mu_s,p_t}. 
\end{equation*}
As $s \to t$, the left hand side converges to $\mathbf{g}_{\mu_t}(\dot{\mu}_t,\dot{\mu}_t)+\frac{\dd}{\dd t} E(\mu_t)$ by the differentiability assumptions as $\mathbf{g}_{\mu_t}(\cdot,\cdot)$ is a continuous quadratic form on $\tgtspace{\mu_t}$. On the other hand the right hand side is non-negative (resp. non-positive) for $s > t$ (resp. $s < t$) since $p_t \leq 0$. As the limit when $s \to t$ exists, its only value can be $0$.  
\end{proof}

\noindent Leveraging this result, it is easy to prove the dissipation of the energy.

\begin{proposition}\label{prop:dE/dt}
Let \((b_t)_{t \geq 0}\) follow the Sinkhorn potential flow given by \cref{thm:existenceuniqueness_finite} and we write $\mu_t = B^{-1}(b_t)$. Then $(E(\mu_t))_{t \geq 0}$ belongs to $\mathscr H^1_\text{loc}([0, + \infty); \R)$ and satisfies the energy dissipation identity~\eqref{eq:energy_dissipation} for a.e. $t$. 
\end{proposition}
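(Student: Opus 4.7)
The plan is to combine the finite-dimensional regularity granted by \cref{thm:existenceuniqueness_finite} with the already established \cref{lm:energy_dissipation}, which gives the dissipation identity pointwise a.e.\ as soon as the energy is differentiable. Once \(E(\mu_t)\) is shown to be absolutely continuous, there is essentially nothing left to do.

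First, because \(\X\) is finite, \(\mcal H_c\) is a finite-dimensional Hilbert space (see \cref{rem:finiteX}), on which \(\Tilde E(b) = \llrrangle{b, V b}_{\mcal H_c}\) is a continuous quadratic form, hence smooth and in particular Lipschitz on the bounded set \(\mcal B\). By \cref{thm:existenceuniqueness_finite}, the curve \((b_t)_{t \geq 0}\) is valued in \(\mcal B\) and its derivative \(\dot b_t\), produced by the Hille–Yosida theorem, is essentially bounded. Thus \((b_t)_{t \geq 0}\) is locally Lipschitz in \(\mcal H_c\).

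Second, composing the locally Lipschitz curve \((b_t)_{t \geq 0}\) with the locally Lipschitz quadratic form \(\Tilde E\) yields \(t \mapsto E(\mu_t) = \Tilde E(b_t)\) locally Lipschitz. In particular it belongs to \(\mathscr H^1_\text{loc}([0, +\infty); \R)\) and is differentiable at a.e.\ \(t\).

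Third, the identity~\eqref{eq:energy_dissipation} follows from \cref{lm:energy_dissipation}, whose proof does not rely on \(\X\) being finite. Since \((b_t)_{t \geq 0} \in \mathscr H^1_\text{loc}([0, +\infty); \mcal H_c)\), \cref{prop:equivalence_distance_curves} gives \((\mu_t)_{t \geq 0} \in \mathrm{AC}^2_\text{loc}([0, + \infty); (\PX, \mathsf{d}_S))\), so \((\mu_t)_{t \geq 0}\) is differentiable in \(\tgtspace{\mu_t}\) for a.e.\ \(t\). At every \(t\) where both \((\mu_t)\) is differentiable in \(\tgtspace{\mu_t}\) and \(E(\mu_t)\) is differentiable in \(\R\), which is a full-measure set, \cref{lm:energy_dissipation} yields \(\frac{\dd}{\dd t} E(\mu_t) = -\mathbf g_{\mu_t}(\dot \mu_t, \dot \mu_t)\). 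There is no real obstacle: all the conceptual content, namely the cancellation involving the pressure and the expansion of the metric tensor against the derivative, is already contained in \cref{lm:energy_dissipation}, and finite dimensionality trivializes the regularity step.
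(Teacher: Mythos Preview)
Your argument is correct and matches the paper's proof essentially line for line: use the quadratic expression $\Tilde E(b)=\llrrangle{b,Vb}_{\mcal H_c}$ together with the bounded-derivative regularity of $(b_t)_{t\geq 0}$ from \cref{thm:existenceuniqueness_finite} to get $t\mapsto E(\mu_t)\in\mathscr H^1_\text{loc}$, then invoke \cref{lm:energy_dissipation}. The only difference is that you spell out the finite-dimensionality and Lipschitz reasoning a bit more explicitly than the paper does.
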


\begin{proof}
From the expression of $E$ given in~\eqref{eq:~E}, and as the curve $(b_t)_{t \geq 0}$ belongs to $\mathscr H^1_\text{loc}([0, +\infty); \mcal H_c)$ even with bounded derivative, we get $(\tilde{E}(b_t))_{t \geq 0} \in \mathscr H^1_\text{loc}([0, + \infty); \R)$. The latter coincides with $(E(\mu_t))_{t \geq 0}$, and is thus differentiable for a.e. $t$. We deduce the energy dissipation~\eqref{eq:energy_dissipation} from \cref{lm:energy_dissipation}.   
\end{proof}

\subsection{Existence in the general case}

Now that we have established results in the case of a finite space, we aim to generalize them to any compact metric space and prove the existence part of \cref{thm:main}. To obtain this result, we will discretize the space \(\mcal X\) and approximate it with some \(\mcal X^n\) containing \(n\) points, but doing so will yield solutions that are functions on \(\mcal X^n\), which we need to extend to \(\mcal X\). This is done using the following lemma.

\begin{lemma}\label{lemma:interp}
Let \(\mcal X^n = \{ x_1, \ldots, x_n \} \subset \mcal X\). For a positive definite kernel \(k\) on \(\mcal X\), denote \(\mcal H_k\) the corresponding RKHS and \(\mcal H_k^n\) the RKHS for the restriction \(k^n\) of \(k\) to \(\mcal X^n \times \X^n\). There is a linear isometric embedding \(\mcal I^n: \mcal H_k^n \to \mcal H_k\) verifying for any \(\phi\in\mcal H_k^n\) and any $i=1,\ldots,n$
\begin{equation}\label{eq:interp}
\mcal I^n[\phi](x_i) =  \phi(x_i).
\end{equation}
\end{lemma}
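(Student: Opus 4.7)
The plan is to define $\mcal I^n$ on the canonical spanning family $\cbrac{k^n(x_i, \cdot)}_{i=1}^n$ of $\mcal H_k^n$ by prescribing $\mcal I^n[k^n(x_i, \cdot)] \coloneqq k(x_i, \cdot)$ and extending linearly. Since $\X^n$ has $n$ points, $\mcal H_k^n$ is finite-dimensional and is spanned by these reproducing kernels, so this prescription determines a linear map on its full domain once well-definedness is checked.

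The central step, which handles well-definedness and the isometry property simultaneously, is the observation that the same Gram matrix $(k(x_i, x_j))_{i,j}$ governs both norms. Indeed, for any $\phi = \sum_{i=1}^n c_i k^n(x_i, \cdot) \in \mcal H_k^n$, the reproducing property in each RKHS gives
\begin{equation*}
\norm{\phi}_{\mcal H_k^n}^2 = \sum_{i,j=1}^n c_i c_j k^n(x_i, x_j) = \sum_{i,j=1}^n c_i c_j k(x_i, x_j) = \norm{\sum_{i=1}^n c_i k(x_i, \cdot)}_{\mcal H_k}^2,
\end{equation*}
where the middle equality uses that $k^n$ is the restriction of $k$ to $\X^n \times \X^n$. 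If $\phi = 0$ in $\mcal H_k^n$, the common value is zero and hence $\sum_i c_i k(x_i, \cdot) = 0$ in $\mcal H_k$, so the prescription $\mcal I^n[\phi] \coloneqq \sum_i c_i k(x_i, \cdot)$ does not depend on the representation of $\phi$. The same identity shows $\mcal I^n$ is norm-preserving, hence injective, i.e.\ an isometric embedding.

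Finally, the interpolation identity \eqref{eq:interp} follows by direct evaluation: for $\phi = \sum_j c_j k^n(x_j, \cdot)$, by definition of $\mcal I^n$ and the fact that $k^n$ is the restriction of $k$,
\begin{equation*}
\mcal I^n[\phi](x_i) = \sum_j c_j k(x_j, x_i) = \sum_j c_j k^n(x_j, x_i) = \phi(x_i).
\end{equation*}
I do not anticipate any real obstacle: the whole argument reduces to noting that the Gram matrix simultaneously controls the inner product structure on $\mcal H_k^n$ and on the finite-dimensional subspace $\vspan\cbrac{k(x_i, \cdot) : 1 \leq i \leq n} \subseteq \mcal H_k$, so linear dependencies and inner products transfer automatically across the two sides.
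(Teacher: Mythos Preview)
Your proof is correct and follows essentially the same approach as the paper: define $\mcal I^n$ on the spanning family by $k^n(x_i,\cdot)\mapsto k(x_i,\cdot)$ and verify the isometry via the shared Gram matrix. You are in fact slightly more careful than the paper in explicitly arguing well-definedness (independence of representation) and writing out the evaluation identity~\eqref{eq:interp}.
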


\begin{proof}
Since we have by construction \(\mcal H_k^n = \vspan(k^n(x_1,\cdot), \dots, k^n(x_n,\cdot))\), define for \(\phi = \sum_{i=1}^n\phi_i k^n(x_i, \cdot)\) its embedding $\mcal I^n[\phi] \coloneqq \sum_{i=1}^n\phi_i k(x_i, \cdot)$ which clearly satisfies~\eqref{eq:interp}.
It is evidently linear, and we can check \(\mcal I^n\) is an isometry by the reproducing kernel property:
\begin{equation*}
\norm{\mcal I^n[\phi]}_{\mcal H_k}^2 = \sum_{i,j=1}^n\phi_i \phi_j k(x_i, x_j) = \sum_{i,j=1}^n\phi_i \phi_j k^n(x_i, x_j)= \norm{\phi}_{\mcal H_k^n}^2. \qedhere
\end{equation*}
\end{proof}

\noindent In the following, we denote \(\cbrac{x_i}_{i=1}^\infty\) a countable set dense in \(\mcal X\), and use the notations of \cref{lemma:interp} for \(k=k_c\). We also write $\mcal B^n$ for the intersection of the unit sphere of $\mcal H^n_c$ and $H_c^n[\mcal M_+(\X^n)]$.

\begin{proposition}\label{prop:flow_ntoflow}
Let \(\Bar b_0 \in \mcal B\) an initial value, \((\sbar b^n_0)_n\) a sequence of elements with $\sbar b^n_0 \in \mcal B^n$ and such that \(\mcal I^n[\sbar b^n_0]\) converges in \(\mcal H_c\) to \(\Bar b_0 \in \mcal B\). For a given time horizon $T$, we write \(\brac{\underline b_t^n}_{t\in[0, T]}\) for the Sinkhorn potential flow on \([0, T]\) starting at \(\sbar b^n_0\) with underlying space \(\mcal X^n\) given by \cref{thm:existenceuniqueness_finite}, and denote \(b_t^n \coloneqq \mcal I^n\sqbrac{\underline b_t^n}\). 

Then the sequence \((b^n)_n\) has a subsequence converging uniformly in \(\mcal C([0, T]; \mcal H_c)\) to a Sinkhorn potential flow \((b_t)_{t\in[0, T]}\) on \([0, T]\) starting at \(\Bar b_0\). Moreover, with $\mu_t = B^{-1}(b_t)$, the energy $t \mapsto E(\mu_t)$ is non-increasing, absolutely continuous, and satisfies the energy dissipation equality~\eqref{eq:energy_dissipation}.
\end{proposition}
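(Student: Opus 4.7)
The plan is a compactness-then-identification argument performed entirely in the $b$-variable, using the monotone reformulation of the equation from \cref{thm:change_flow_b}. The key advantage is that the $b^n$ all live in the fixed Hilbert space $\mcal H_c$, and the differential inclusion $\dot b + (W + \tilde P)b \ni 0$ is more amenable to a variational-inequality passage to the limit than the $\mu$-side equation~\eqref{eq:sinflow_mu}.

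First, uniform bounds and extraction of a limit. Combining \cref{prop:dE/dt} for each discrete flow with the universal lower bound in~\eqref{eq:equivalent_metric_tensor_flat} gives
\[
\int_0^T \|\dot{\underline b}{}^n_t\|^2_{\mcal H^n_c} \, \dd t \leq \frac{2}{\varepsilon} \brac{E(\underline \mu^n_0) - E(\underline \mu^n_T)} \leq \frac{2}{\varepsilon}\,\mathrm{osc}(V).
\]
Since $\mcal I^n$ is an isometry with $\mcal I^n[\mcal B^n] \subseteq \mcal B$, the extended curves $b^n$ lie in $\mcal C([0,T]; \mcal B)$ with the same uniform $L^2$-bound on $\dot b^n$ in $\mcal H_c$, and are therefore equi-$\frac12$-Hölder. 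Combined with compactness of $\mcal B$ in $\mcal H_c$ (\cref{thm:embeddingB}), Ascoli yields a non-relabeled subsequence with $b^n \to b$ uniformly in $\mcal C([0,T]; \mcal H_c)$, $b \in \mathscr H^1([0,T]; \mcal H_c)$, $\dot b^n \rightharpoonup \dot b$ weakly in $L^2([0,T]; \mcal H_c)$, and $b_0 = \Bar b_0$ from the hypothesis on initial data.

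Next I identify $b$ as a Sinkhorn potential flow. By \cref{prop:adjoint_VP}, the inclusion~\eqref{eq:b_flat} is equivalent to the variational inequality
\[
\llrrangle{\phi - b_t, \dot b_t + W b_t}_{\mcal H_c} \geq 0 \qquad \text{for a.e. } t \text{ and all } \phi \in H_c[\MpX].
\]
Given $\nu \in \MpX$, I approximate it by $\nu^n \in \MpX$ supported on $\X^n$ with $\nu^n \rightharpoonup^* \nu$, so that $H_c[\nu^n] \to H_c[\nu]$ strongly in $\mcal H_c$ by compactness of $H_c$ (\cref{prop:analytic_HK_mu}). Unwinding the discrete variational inequality through $\mcal I^n$, and using the key pointwise identity $W^n \underline b^n_t = (W b^n_t)|_{\X^n}$ (immediate from the definition of $V^*$ when $H_c^{-1}[b^n_t]$ is supported on $\X^n$), yields $\llrrangle{H_c[\nu^n] - b^n_t, \dot b^n_t + W b^n_t}_{\mcal H_c} \geq 0$ for a.e. $t$. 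Multiplying by $\chi \geq 0$ smooth and integrating over $[0,T]$, the limit passes thanks to: (i) strong convergence of the time-constant $H_c[\nu^n] \to H_c[\nu]$ paired against the weak $L^2$ limit $\dot b^n \rightharpoonup \dot b$; (ii) uniform convergence $W b^n \to W b$ in $\mcal C([0,T]; \CX)$ from the uniform continuity of $W$ on the compact set $\mcal B$ (\cref{prop:continuous_WP}), paired against the weakly-$*$ convergent $\nu^n$; (iii) the vanishing identities $\llrrangle{b^n_t, \dot b^n_t}_{\mcal H_c} = \tfrac{1}{2}\frac{\dd}{\dd t}\|b^n_t\|^2_{\mcal H_c} = 0$ and $\llrrangle{b^n_t, W b^n_t}_{\mcal H_c} = 0$ by skew-symmetry, inherited in the limit. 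Taking $\nu$ in a countable weakly-$*$ dense family of $\MpX$ and extending by weak-$*$ continuity of $\nu \mapsto \int (\dot b_t + W b_t) \, \dd\nu$ (since $\dot b_t + W b_t \in \CX$) yields the full variational inequality at a.e. $t$.

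For the energy, joint continuity of $\Tilde{\mathbf g}_b(v, v)$ and its quadratic dependence on $v$ give, via a standard lsc argument against strong $b^n \to b$ and weak $\dot b^n \rightharpoonup \dot b$,
\[
\int_s^t \Tilde{\mathbf g}_{b_r}(\dot b_r, \dot b_r) \, \dd r \leq \liminf_n \int_s^t \Tilde{\mathbf g}_{b^n_r}(\dot b^n_r, \dot b^n_r) \, \dd r = \lim_n \brac{E(\mu^n_s) - E(\mu^n_t)} = E(\mu_s) - E(\mu_t),
\]
using \cref{prop:dE/dt} on the discrete side and weak-$*$ continuity of $E$, which immediately gives monotonicity. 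Absolute continuity and the equality in the energy dissipation are then obtained by a chain-rule argument along the flow: using \cref{lm:energy_dissipation} (applicable now that $b$ is a Sinkhorn potential flow) to identify the a.e.~derivative of $E(\mu_\cdot)$ as $-\Tilde{\mathbf g}_{b_t}(\dot b_t, \dot b_t)$, and matching it against the above lsc estimate to rule out any singular part of the distributional derivative of $t \mapsto E(\mu_t)$. The main obstacle is the identification step: the discrete and limit inequalities live on different spaces ($\X^n$ versus $\X$), and the pairings mix strong, weak, and uniform convergence modes on different factors, so matching them correctly through $\mcal I^n$ is what makes compactness of both $H_c$ and $\mcal B$ essential.
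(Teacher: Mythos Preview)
Your compactness and identification steps are essentially the paper's proof, just rephrased through the variational inequality $\llrrangle{\phi-b_t,\dot b_t+Wb_t}_{\mcal H_c}\geq 0$ instead of defining $p^n_t\coloneqq -(\dot b^n_t+Wb^n_t)$ explicitly; the pointwise identity $W^n\underline b^n_t=(Wb^n_t)|_{\X^n}$, the strong/weak pairing $H_c[\nu^n]\to H_c[\nu]$ against $\dot b^n\rightharpoonup\dot b$, and the vanishing of $\llrrangle{b^n_t,\dot b^n_t+Wb^n_t}_{\mcal H_c}$ are exactly the ingredients the paper uses.

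The gap is in the energy argument. Your lsc estimate yields
\[
E(\mu_s)-E(\mu_t)\;\geq\;\int_s^t \tilde{\mathbf g}_{b_r}(\dot b_r,\dot b_r)\,\dd r,
\]
and \cref{lm:energy_dissipation} identifies the a.e.\ derivative of $E(\mu_\cdot)$ as $-\tilde{\mathbf g}_{b_t}(\dot b_t,\dot b_t)$. But for a monotone function these two facts together do \emph{not} exclude a singular part: decomposing $-\dd E(\mu_\cdot)=\tilde{\mathbf g}_{b_t}(\dot b_t,\dot b_t)\,\dd t+\dd\sigma_{\mathrm{sing}}$, your inequality only says $\dd\sigma_{\mathrm{sing}}\geq 0$, which is already known. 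A Cantor-staircase-type drop in $E(\mu_\cdot)$ is fully compatible with both statements. So ``matching against the lsc estimate'' gives the inequality in the wrong direction to rule out singular parts.

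The paper closes this gap with a different, quantitative input: it uses that $t\mapsto\|\dot{\underline b}{}^n_t\|_{\mcal H^n_c}$ is non-increasing (\cref{coro:normdotb}) together with the $L^2$ bound from \cref{crl:bdot_L2} to obtain the uniform pointwise estimate $\|\dot{\underline b}{}^n_t\|_{\mcal H^n_c}\leq\sqrt{2\,\mathrm{osc}(V)/(\varepsilon t)}$. Combined with the \emph{upper} bound in~\eqref{eq:equivalent_metric_tensor_flat} this gives $E(\mu^n_{t_0})-E(\mu^n_{t_1})\leq C\ln(t_1/t_0)$ uniformly in $n$, which survives the limit and yields local absolute continuity of $t\mapsto E(\mu_t)$ directly. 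Only then does \cref{lm:energy_dissipation} upgrade monotonicity to the full equality~\eqref{eq:energy_dissipation}. You need to insert this (or an equivalent uniform modulus of continuity for the discrete energies) to make the last paragraph work.
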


\begin{proof}
We proceed in two steps: first we show the compactness, then prove that the limit is a Sinkhorn flow satisfying~\eqref{eq:energy_dissipation}.

\medskip

\noindent \emph{Compactness.} Observe that \(\Dot b_t^n = \mcal I^n\sqbrac{\Dot{\underline b}^n_t}\) for a.e. $t$, from the linearity and continuity of \(\mcal I^n\). As $\mcal I^n$ is an isometry, with \cref{crl:bdot_L2} we obtain 
\begin{equation*}
\int_0^T\norm{\Dot{b}_t^n}_{\mcal H_c}^2\dd t = \int_0^T\norm{\Dot{\underline b}_t^n}_{\mcal H_c^n}^2\dd t \leq \frac2\varepsilon \mathrm{osc}(V).
\end{equation*}
Thus the sequence \((\Dot b^n)_n\) is bounded in \(L^2([0, T]; \mcal H_c)\). Moreover, $b^n_t \in \mcal B$ for all $n$ and all $t$, and the set $\mcal B$ is compact. By standard arguments (see \cref{thm:compactness_H1} in the appendix), up to extraction, the sequence $(b^n)_n$ converges uniformly in $\mcal C ([0,T],\mcal B)$ to a limit \(b \in \mathscr H^1([0, T];\mcal H_c) \), and the sequence $(\dot{b}^n)_n$ converges weakly in \(L^2([0, T]; \mcal H_c)\) to $\dot{b}$ the distributional derivative of the limit $b$.

\medskip

\noindent \emph{The limit is a Sinkhorn potential flow.}
Note that automatically \(b_0 = \Bar b_0\) by pointwise limit. Define
\(p_t^n \coloneqq -\brac{\Dot b_t^n + Wb_t^n}\) and \(p_t\coloneqq  -\brac{\Dot b_t + W b_t}\).
Showing \(p_t \in \tilde{P} b_t\) for a.e. \(t\) will yield that \((b_t)_{t\in[0, T]}\) is a Sinkhorn potential flow on \([0, T]\), see \cref{thm:change_flow_b}. 

From the interpolation property \eqref{eq:interp} of \(\mcal I^n\), we have for \(i=1, \ldots, n\) that \(\Dot b_t^n(x_i) = \Dot{\underline b}_t^n(x_i)\). Denoting \(W^n\) the operator corresponding to \(W\) on \(\mcal H_c^n\), we also have 
\([Wb_t^n] (x_i) = [W^n\underline b_t^n](x_i)\): this can be checked easily since $H_c^{-1}[\underline b_t^n]$ and $H_c^{-1}[ b_t^n]$ coincide, in the sense that they are both discrete measures supported on $\X^n$.

Therefore, it holds that \( p^n_t(x_i) \leq 0 \) for a.e. \(t\) and all $i = 1, \ldots n$. Thus for any $\sigma^n \in \mcal M_+(\X^n)$ and any $t_0 < t_1$,
\begin{equation}\label{eq:p_t^n<=0}
\int_{t_0}^{t_1} \langle \sigma^n, p^n_t \rangle \dd t \leq 0.
\end{equation}
Take now an arbitrary $\sigma \in \mcal M_+(\X)$. As $\X^n$ becomes dense in $\X$ as $n \to + \infty$, we can find a sequence $(\sigma^n)_n$ of measures with $\sigma^n \in \mcal M_+(\X^n)$ such that $\sigma^n$ converges weakly-\txtstar{} to $\sigma$ as $n \to + \infty$. Then it holds  
\begin{align*}
\int_{t_0}^{t_1} \langle \sigma^n, p^n_t \rangle \dd t & = - \int_{t_0}^{t_1} \lrangle{\sigma^n, Wb^n_t} \dd t - \int_{t_0}^{t_1} \langle \sigma^n, \dot{b}^n_t \rangle \dd t \\
&= - \int_{t_0}^{t_1} \lrangle{\sigma^n, Wb^n_t} \dd t - \int_{t_0}^{t_1} \langle H_c[\sigma^n], \dot{b}^n_t \rangle_{\mcal H_c} \dd t.
\end{align*}
The first term can be passed to the limit, since, for all $t$, $b^n_t \to b_t$ strongly in $\mcal H_c$ and thus \(Wb_t^n \to Wb_t\) strongly in $\CX$ (\cref{prop:continuous_WP}). Combined with the weak-\txtstar{} convergence of $\sigma^n$ to $\sigma$, the duality pairing converges, 
and the dominated convergence theorem gives convergence of the integral. For the second term, the convergence of \(\Dot b^n\) is weak in \(L^2([0, T]; \mcal H_c)\) and that of \( H_c[\sigma^n]\) to $H_c[\sigma]$ is strong in $\mcal H_c$ by \cref{lemma:Hkc0} and thus strong in the same \(L^2\) space, allowing one to pass the duality pairing to the limit.
We therefore get
\begin{equation*}
\lim_{n \to + \infty} \int_{t_0}^{t_1} \langle \sigma^n, p^n_t \rangle \dd t = \int_{t_0}^{t_1} \langle \sigma, Wb_t  + \dot{b}_t \rangle  \dd t = \int_{t_0}^{t_1} \langle \sigma, p_t \rangle \dd t.
\end{equation*}
Combined with \eqref{eq:p_t^n<=0} it gives that the right hand side is non-positive. By arbitrariness of $t_0 < t_1$ and $\sigma$, and as $p_t \in \CX$ for all $t$, we deduce that for a.e. $t$, we have $p_t(x) \leq 0$ for every \(x\). We also have for a.e. \(t\) that
\(
\llrrangle{ b_t, p_t}_{\mcal H_c} = -\llrrangle{ b_t, Wb_t}_{\mcal H_c} - \lrangle{b_t, \Dot b_t}_{\mcal H_c} = 0\)
since \(W\) is skew (\cref{prop:adjoint_VP}) and the norm of \(b_t\) is constant, thus \(p_t \in \tilde{P}b_t\). 

\medskip

\noindent \emph{Energy dissipation}. 
To prove the energy dissipation we only need to prove that $t \mapsto E(\mu_t)$ is absolutely continuous as the identity~\eqref{eq:energy_dissipation} follows from Lemma~\ref{lm:energy_dissipation}, and the monotonicity comes from~\eqref{eq:energy_dissipation}. Denoting by $C$ a constant independent of $n$ and whose exact value may be found in~\eqref{eq:equivalent_metric_tensor_flat}, with the help of \cref{prop:dE/dt}, for $t_0 < t_1$ 
\begin{equation*}
E(\mu^n_{t_0}) - E(\mu^n_{t_1}) = \int_{t_0}^{t_1} \tilde{\mathbf{g}}_{b^n_t}(\dot{b}^n_t, \dot{b}^n_t) \dd t \leq C \int_{t_0}^{t_1} \| \dot{b}^n_t \|_{\mcal H_c}^2 \dd t = C \int_{t_0}^{t_1} \| \underline{\Dot{b}}^n_t \|_{\mcal H^n_c}^2 \dd t. 
\end{equation*}
Next observe as $t \mapsto \| \underline{\Dot{b}}^n_t \|_{\mcal H_c}$ is non-increasing (\cref{coro:normdotb}) and by the integrability given in \cref{crl:bdot_L2} that 
\begin{equation*}
\| \underline{\Dot{b}}^n_t \|_{\mcal H_c} \leq \frac{1}{t} \int_0^t \| \underline{\Dot{b}}^n_s \|_{\mcal H^n_c} \dd s \leq \frac{1}{\sqrt{t}} \sqrt{\int_0^t \| \underline{\Dot{b}}^n_s \|_{\mcal H^n_c}^2 \dd s} \leq \sqrt{\frac{2 \mathrm{osc(V)}}{\varepsilon t}}.  
\end{equation*} 
Plugging these two estimates together, up to a newly defined constant $C$, we have $E(\mu^n_{t_0}) - E(\mu^n_{t_1}) \leq C \ln(t_1/t_0)$ if $t_0 < t_1$, and $t \mapsto E(\mu^n_t)$ is non-increasing and continuous. Passing to the limit we deduce $E(\mu_{t_0}) - E(\mu_{t_1}) \leq C \ln(t_1/t_0)$ and $t \mapsto E(\mu_t)$ non-increasing. It shows that $(E(\mu_t))_t$ is (locally) absolutely continuous. The conclusion about energy dissipation follows from Lemma~\ref{lm:energy_dissipation}. 
\end{proof}

\begin{proof}[\textbf{Proof of \cref{thm:main}: existence and \ref{thm:main:energy}}]
As $\X$ is metric and compact, we can find a sequence $(x_n)_{n}$ dense in $\X$. Define $\X^n = \{ x_1, \ldots, x_n \}$. Every $\bar{b}_0 \in \mcal B$ can be written $B(\bar{\mu}_0)$, and any $\bar{\mu}_0$ can be approximated by a sequence $\bar{\mu}^n_0 \in \mcal P(\mcal X^n)$. We write $b^n_t = \mcal I^n[\underline{b}^n_t]$ with $(\underline{b}^n_t)_{t \geq 0}$ the Sinkhorn potential flow on $\mcal X^n$ starting from $B(\bar{\mu}^n_0)$. Then \cref{prop:flow_ntoflow} and a diagonal extraction guarantee that $(b^n_t)_{t \geq 0}$ converges uniformly on compact sets to a Sinkhorn potential flow starting from $B(\bar{\mu}_0)$ which satisfies in addition \refthmitem{thm:main}{energy}. 
%From the uniqueness of the flow, the convergence of \cref{prop:flow_ntoflow} actually holds for the whole sequence, not only along a subsequence.  
\end{proof}

\subsection{Long term behavior}

We now turn to the proof of the long term behavior, that is, the convergence of the energy to the minimal energy. Note that $E(\mu) = \int V \dd \mu$ has minimal value $\min V$, and that the minimal value is reached if $\mu$ is supported on \(\argmin_{\mcal X}V\). 

\refthmitem{thm:main}{asymptotic} is the consequence of \cref{lemma:cvtocritpoint} and \cref{lemma:critpointsaremin} below: we prove that all accumulation points of the flow are critical points, and that the only critical points in the Sinkhorn geometry are the global minimizers. 

\begin{lemma}\label{lemma:cvtocritpoint}
Let \((\mu_t)_{t \geq 0}\) follow a Sinkhorn potential flow of \(V\) and $(b_t)_{t \geq 0} = (B(\mu_t))_{t \geq 0}$. Then the curve $(b_t)_{t \geq 0}$ has at least one accumulation point in $\mcal B$, and any accumulation point $\bar b$ is critical in the sense that the following holds in $\CX$:
\begin{equation}\label{eq:critpoint}
 W \Bar b + \tilde{P} \Bar b \ni 0.
\end{equation}
\end{lemma}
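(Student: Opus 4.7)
The existence of at least one accumulation point is immediate from \cref{thm:embeddingB}: the set $\mcal B$ is compact in $\mcal H_c$, so the curve $(b_t)_{t \geq 0}$, which is valued in $\mcal B$, must admit accumulation points as $t \to +\infty$. The substantive content is the criticality of any such accumulation point, and the plan is to combine three ingredients already established: the $L^2$-integrability of $\| \dot b \|_{\mcal H_c}$ (\cref{crl:bdot_L2}), its monotonic decrease (\cref{coro:normdotb}), and the continuity/closed graph properties of $W$ and $\tilde{P}$ (\cref{prop:continuous_WP}).

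First, I would show that $\| \dot b_t \|_{\mcal H_c} \to 0$ as $t \to +\infty$. Writing $\psi(t) = \| \dot b_t \|_{\mcal H_c}$ for $t$ at which $\dot b_t$ is defined, $\psi$ is essentially non-increasing by \cref{coro:normdotb}, so for any such $t$,
\[
t \, \psi(t)^2 \leq 2 \int_{t/2}^{t} \psi(s)^2 \dd s.
\]
The right-hand side tends to zero as $t \to +\infty$ by \cref{crl:bdot_L2}, and hence $\psi(t) \to 0$.

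Next, let $\bar b$ be an accumulation point and pick $s_n \to +\infty$ with $b_{s_n} \to \bar b$ in $\mcal H_c$. The inclusion $\dot b_t + W b_t + p_t = 0$ holds for some $p_t \in \tilde{P} b_t$ on a set $G \subseteq [0,+\infty)$ of full measure. Since $G$ is dense and the curve $(b_t)_{t \geq 0}$ is continuous, I can select $t_n \in G$ with $|t_n - s_n| \leq 1/n$; continuity of $b$ preserves $b_{t_n} \to \bar b$ in $\mcal H_c$, and the previous step gives $\| \dot b_{t_n} \|_{\mcal H_c} \to 0$. Setting $p_n := -W b_{t_n} - \dot b_{t_n} \in \tilde{P} b_{t_n}$, the continuity of $W$ on $\mcal B$ (\cref{prop:continuous_WP}) yields $W b_{t_n} \to W \bar b$ in $\CX$, while $\dot b_{t_n} \to 0$ in $\CX$ via the continuous embedding $\mcal H_c \hookrightarrow \CX$. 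Hence $p_n \to -W \bar b$ in $\CX$, and the closed graph property of $\tilde{P}$ in $\mcal H_c \times \CX$ (\cref{prop:continuous_WP}) gives $-W \bar b \in \tilde{P} \bar b$, i.e. $W \bar b + \tilde{P} \bar b \ni 0$.

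The main obstacle is not deep: it is the coordination of a sequence of times $(t_n)$ lying in the full-measure set $G$, close enough to an approximating sequence $(s_n)$ to preserve convergence of $(b_{t_n})$ to $\bar b$. The insight that makes the argument work is the pointwise decay $\| \dot b_t \|_{\mcal H_c} \to 0$ obtained by pairing the two corollaries, which reduces the evolution equation to the critical point equation in the limit.
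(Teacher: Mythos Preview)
Your proof is correct and follows essentially the same approach as the paper: compactness of $\mcal B$ for existence, the combination of \cref{coro:normdotb} and \cref{crl:bdot_L2} to obtain $\|\dot b_t\|_{\mcal H_c} \to 0$, and then the closed-graph property from \cref{prop:continuous_WP} to pass to the limit in the inclusion. Your version is in fact a bit more careful than the paper's in explicitly selecting the times $t_n$ inside the full-measure set $G$ where the inclusion holds, whereas the paper tacitly assumes this.
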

\begin{proof}
The set \(\mcal B\) is compact (\cref{thm:embeddingB}) thus there exists at least one accumulation point for $(b_t)_{t \geq 0}$ as $t \to + \infty$. Let $\bar{b}$ be any accumulation point, that is, there exists $t_n \to + \infty$ such that \(b_{t_n} \to \bar b \in \mcal B\). We have \( -\Dot b_{t_n} \in (Wb_{t_n} + \tilde{P} b_{t_n}) \). Moreover, as $\| \dot{b}_t \|_{\mcal H_c}$ decreases with $t$ (\cref{coro:normdotb}), and $(\dot{b}_t)_{t \geq 0}$ is in $L^2([0, + \infty), \mcal H_c)$ (\cref{crl:bdot_L2}), we see that $\Dot b_{t_n}$ must converge to $0$ in $\mcal H_c$, thus in $\CX$.  
As the graph of $b \mapsto Wb + \tilde{P}b$ is closed (\cref{prop:continuous_WP}) we obtain~\eqref{eq:critpoint}.
\end{proof}

\begin{lemma}\label{lemma:critpointsaremin}
An element \( \bar{b} \in \mcal B\) is a critical point (i.e. verifies \eqref{eq:critpoint}) if and only if $B^{-1}(\bar{b})$ is supported on \(\argmin_{\mcal X}V\).
\end{lemma}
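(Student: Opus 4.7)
The plan is to translate the abstract critical point condition, via the change of variable to the Sinkhorn geometry, into a pointwise statement involving the self-transport Markov kernel $k_{\bar\mu}$, and then apply a maximum principle.

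First I would set up the change of variable. Writing $\bar\mu \coloneqq B\inv(\bar b)$, recall $\bar b = \exp(-f_{\bar\mu}/\varepsilon) > 0$ on $\X$ and $H_c\inv[\bar b] = \exp(f_{\bar\mu}/\varepsilon)\bar\mu$, so $\supp{H_c\inv[\bar b]} = \supp{\bar\mu}$. By definition of $W$ and $V^*$,
\begin{equation*}
W\bar b(y) = \frac{2}{\varepsilon}\int k_c(x,y)\brac{V(y) - V(x)} e^{f_{\bar\mu}(x)/\varepsilon}\dd\bar\mu(x).
\end{equation*}
Using the factorization $k_c(x,y) e^{f_{\bar\mu}(x)/\varepsilon} = k_{\bar\mu}(x,y)\bar b(y)$ and the Schrödinger system \eqref{eq:schrödingersystem} which implies $\int k_{\bar\mu}(\cdot,y)\dd\bar\mu = 1$ for every $y\in\X$, this simplifies to
\begin{equation*}
W\bar b(y) = \frac{2}{\varepsilon}\,\bar b(y)\,\brac{V(y) - K_{\bar\mu}[V](y)}.
\end{equation*}
Since $\bar b > 0$ and $p \in \tilde P\bar b$ means $p\leq 0$ on $\X$ with $p = 0$ on $\supp\bar\mu$, the critical point condition $W\bar b + \tilde P\bar b \ni 0$ is equivalent to: $V \geq K_{\bar\mu}[V]$ on $\X$, with equality on $\supp\bar\mu$.

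The forward implication is then immediate: if $\supp\bar\mu \subseteq \argmin V$, then $V \equiv \min V$ on $\supp\bar\mu$, and since $K_{\bar\mu}[V](y)$ is a weighted average of $V$ against the probability measure $k_{\bar\mu}(\cdot,y)\bar\mu$ supported on $\supp\bar\mu$, one gets $K_{\bar\mu}[V] \equiv \min V$ identically on $\X$. The inequality and the equality on $\supp\bar\mu\subseteq \argmin V$ follow trivially.

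For the converse, the key step (and really the only non-formal one) is a Markov-style maximum principle. By continuity of $V$ and compactness of $\supp\bar\mu$, pick $y^* \in \argmax_{\supp\bar\mu} V$ and let $V_0 \coloneqq V(y^*)$. The equality at $y^*$ rewrites as
\begin{equation*}
0 = V_0 - K_{\bar\mu}[V](y^*) = \int \brac{V_0 - V(x)}\, k_{\bar\mu}(x,y^*)\dd\bar\mu(x).
\end{equation*}
The integrand is non-negative on $\supp\bar\mu$ (by choice of $y^*$) with $k_{\bar\mu}(x,y^*) > 0$, so $V \equiv V_0$ $\bar\mu$-almost everywhere; by continuity, $V \equiv V_0$ on $\supp\bar\mu$. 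Using once more $\int k_{\bar\mu}(\cdot,y)\dd\bar\mu = 1$, this forces $K_{\bar\mu}[V](y) = V_0$ for every $y\in\X$, so the global inequality becomes $V \geq V_0$ on all of $\X$. Hence $V_0 = \min V$ and $\supp\bar\mu \subseteq \argmin V$. The main obstacle is simply to perform the algebraic reduction of $W\bar b$ to the Markov form $V - K_{\bar\mu}[V]$; once that is done the maximum principle is elementary.
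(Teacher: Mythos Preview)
Your proof is correct. The reformulation $W\bar b = \tfrac{2}{\varepsilon}\bar b\,(V - K_{\bar\mu}[V])$ is valid (this is exactly the identity $V^*\bar b = \bar b\, K_{\bar\mu}[V]$, see~\eqref{eq:Kmu_in_b}), and the maximum-principle step using $y^*\in\argmax_{\supp\bar\mu}V$ together with strict positivity of $k_{\bar\mu}$ is sound.

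The paper's argument is based on the same ingredient (strict positivity of the kernel) but runs it from the opposite end: instead of picking the \emph{maximum} of $V$ on $\supp\bar\mu$, the paper evaluates at a \emph{global minimizer} $x^\star\in\argmin_\X V$. From $VH_c\inv[\bar b]\geq V_{\min}H_c\inv[\bar b]$ and strict positivity of $k_c$, it deduces $V^*\bar b(x^\star)\geq V_{\min}\bar b(x^\star)$ with equality iff $\supp\bar\mu\subseteq\argmin V$; then the pressure constraint $-W\bar b\leq 0$ forces equality at $x^\star$. Your route has the advantage of making the Markov structure of $K_{\bar\mu}$ explicit and reading the result as a textbook maximum principle; the paper's route avoids introducing $K_{\bar\mu}$ and works directly with $V^*$. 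Both are one-line arguments once the algebraic reduction is done.
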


\begin{proof}
In this proof we write \(V_{\min} \coloneqq \min_{\mcal X}V\). We take $\bar{b} \in \mcal B$, and $\bar{\mu} = B^{-1}(\bar{b})$.

First we assume that $\bar{b}$ is a critical point. As $H_c^{-1}[\bar{b}]$ is a non-negative measure, it implies that $V H_c^{-1}[\bar{b}] \geq V_{\min} H_c^{-1}[\bar{b}]$, with equality if and only if \(\bar{\mu}\), equivalently $H_c^{-1}[\bar{b}]$, is supported on \(\argmin_{\mcal X}V\). Applying $H_c$, which is a kernel integral operator with a strictly positive kernel $\exp(-c/\varepsilon)$, we obtain for any $x$ 
\begin{equation*}
(V^* \bar{b})(x) =  H_c[V H_c^{-1}[\bar{b}]](x) \geq  H_c[V_{\min} H_c^{-1}[\bar{b}]](x) = V_{\min} \bar{b}(x),
\end{equation*}
with equality if and only if $V H_c^{-1}[\bar{b}] = V_{\min} H_c^{-1}[\bar{b}]$, that is, $\bar{\mu}$ is supported on \(\argmin_{\mcal X}V\). Recalling $W = \frac{2}{\varepsilon}(V - V^*)$, evaluating the inequality for $x^\star \in \argmin_{\mcal X}V$, we deduce that $W\bar{b}(x^\star) \leq 0$, and the inequality is strict except if $\bar{\mu}$ is supported on \(\argmin_{\mcal X}V\). As a critical point $\bar{b}$ satisfies $- W\bar{b} \in \tilde{P} \bar{b}$ so $-W\bar{b} \leq 0$, we see that there must be equality $W\bar{b}(x^\star) = 0$, so that a critical point $\bar{b}$ necessarily satisfies $\supp \mu \subseteq \argmin_{\mcal X}V$.  

Conversely, if $ \mathrm{supp}(\bar{\mu}) \subseteq \argmin_{\mcal X}V$ then $ V H_c^{-1}[\bar{b}] = V_{\min} H_c^{-1}[\bar{b}]$ and thus $V^* \bar{b} = V_{\min} \bar{b} $, leading to \(-W\bar{b} = \frac{2}{\varepsilon} (V_{\min} - V)\bar{b}\). The latter quantity is non-positive and vanishes on $\mathrm{supp}(\bar{\mu}) = \supp{H_c^{-1}[\bar{b}]}$ giving \(-W\bar{b} \in \tilde{P}\bar{b}\), that is, $\bar{b}$ is a critical point. 
\end{proof}

\begin{proof}[\textbf{Proof of \refthmitem{thm:main}{asymptotic}}]
As the energy $E(\mu_t) = \tilde{E}(b_t)$ is non-increasing, for any sequence $(t_n)_n$ going to $+ \infty$ we know that $\tilde{E}(b_{t_n})$ converges to $\lim_t \tilde{E}(b_t) = \lim_t E(\mu_t)$. Choosing $(t_n)$ a subsequence such that $(b_{t_n})_n$ converges to $\bar{b}$ an accumulation point of $(b_t)_{t \geq 0}$, we have that $\bar{b}$ is critical (\cref{lemma:cvtocritpoint}), and thus $\tilde{E}(\bar{b}) = \min E$ by \cref{lemma:critpointsaremin}. As $\tilde{E}$ is continuous over $\mcal B$ we obtain \eqref{eq:limE=minE}.

If furthermore \(V\) has a unique minimizer \(x^\star\), then there exists a unique critical point by \cref{lemma:critpointsaremin}, namely $B(\delta_{x^\star})$. Thus the function $(b_t)_{t \geq 0}$, valued in the compact $\mcal B$, has $B(\delta_{x^\star})$ as unique accumulation point. The conclusion follows by continuity of $B^{-1}$.
\end{proof}

\section{Convergence of the SJKO scheme in the case of a finite space}\label{section:tauto0}

Despite the fact that the equation~\eqref{eq:sinflow_mu} for the Sinkhorn potential flow was initially derived as the formal limit of the scheme~\eqref{eq:SJKO}, we have proven the existence of a solution to it through the means of monotone operator theory and spatial discretization, rather than proving directly that the scheme~\eqref{eq:SJKO} converges to a solution as the time step $\tau$ vanishes. The latter approach appears to be far more involved than the former in the general case, due to a lack of regularity which will appear further below (\cref{rem:obstructions}).
Nevertheless, in this section we prove the convergence of the scheme when \(\mcal X\) is a finite space, as a lot of difficulties of functional analysis disappear in this case, see \cref{rem:finiteX}.

\begin{theorem}\label{thm:tauto0}
Let \(\mcal X = \cbrac{x_1, \dots, x_n}\) a set with a finite number of points, and for \(\tau >0\) define \(\brac{\mu_k^\tau}_k\) the sequence given by the scheme \eqref{eq:SJKO} with initialization \( \bar{\mu}_0 \in \mcal{P(X)}\). Define \(b_k^\tau \coloneqq B(\mu_k^\tau)\), let \(\brac{\sbar b_t^\tau}_{t \geq 0}\) be its piecewise constant interpolation (i.e. \(\sbar b_t^\tau \coloneqq b_k^\tau\text{ for }t\in[k\tau, (k+1)\tau)\)), and \( (\hat{b}_t^\tau)_{t \geq 0}\) its piecewise geodesic interpolation (i.e. the restriction of this curve to \([k\tau, (k+1)\tau]\) is a constant-speed geodesic between \(b_k^\tau\) and \(b_{k+1}^\tau\) for \(\mathsf d_S\)). 

Then for any \(T>0\), as \(\tau \to 0\), \(\sbar b^\tau\) and \(\hat b^\tau\) both converge uniformly on $[0,T]$ for $\| \cdot \|_{\mcal H_c}$ to the Sinkhorn potential flow of \(V\) starting at \(B(\bar{\mu}_0)\).
\end{theorem}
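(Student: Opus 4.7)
My plan is to analyze~\eqref{eq:SJKO} in the variable \(b = B(\mu)\) via \cref{thm:change_flow_b}, interpreting it as a perturbed implicit Euler step for the monotone evolution \(\dot b_t + Wb_t + \tilde P b_t \ni 0\) inside the finite-dimensional Hilbert space \(\mcal H_c = \R^n\). Finiteness of \(\X\) is used essentially: by \cref{rem:finiteX} the spaces \(\CX\), \(\MX\) and \(\mcal H_c\) all coincide, \(\mcal B\) is compact, \(W\) is smooth on \(H_c[\interior{\MpX}]\), and every Schrödinger-type quantity admits a uniform bound over \(\PX\).

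\textbf{A priori estimates.} Testing~\eqref{eq:SJKO} with competitor \(\mu_k^\tau\) yields the discrete energy inequality \(E(\mu_{k+1}^\tau) + \tfrac{1}{2\tau} S_\varepsilon(\mu_{k+1}^\tau, \mu_k^\tau) \leq E(\mu_k^\tau)\), so that \(\sum_k S_\varepsilon(\mu_{k+1}^\tau, \mu_k^\tau) \leq 2\tau\, \mathrm{osc}(V)\). A uniform lower bound of the form \(S_\varepsilon(\mu, \nu) \geq c\, \norm{B(\mu) - B(\nu)}_{\mcal H_c}^2\) on \(\PX\times\PX\) (which follows, in finite dimension, from the local expansion~\eqref{eq:sinkhornhessian}, the equivalence~\eqref{eq:equivalent_metric_tensor_flat}, and compactness of \(\mcal B\)) then delivers \(\sum_k \norm{b_{k+1}^\tau - b_k^\tau}_{\mcal H_c}^2 \leq C\tau\). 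By Cauchy-Schwarz the piecewise geodesic interpolant \(\hat b^\tau\) is uniformly \(1/2\)-Hölder with values in the compact set \(\mcal B\), and \(\norm{\bar b^\tau_t - \hat b^\tau_t}_{\mcal H_c}\to 0\) uniformly. Arzelà-Ascoli together with \cref{prop:equivalence_distance_curves} extract a common subsequential limit \(b \in \mathscr H^1_{\mathrm{loc}}([0, + \infty); \mcal H_c)\) valued in \(\mcal B\) with \(b_0 = B(\bar\mu_0)\).

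\textbf{Discrete equation in the \(b\)-variable and passage to the limit.} From \cref{lm:optimality_SJKO} I have \(f_{\mu_{k+1}^\tau, \mu_k^\tau} - f_{\mu_{k+1}^\tau} + 2\tau(V + p_{k+1}^\tau) \in \R\) with \(p_{k+1}^\tau \in P\mu_{k+1}^\tau\). The key step is a quantitative second-order expansion
\begin{equation*}
f_{\mu_{k+1}^\tau, \mu_k^\tau} - f_{\mu_{k+1}^\tau} = \varepsilon\, (\id - K_{\mu_{k+1}^\tau})\inv\!\left[(b_{k+1}^\tau)\inv(b_{k+1}^\tau - b_k^\tau)\right] + r_k^\tau \quad \text{in } \CX/\R,
\end{equation*}
with \(\norm{r_k^\tau}_\infty \leq C\,\norm{b_{k+1}^\tau - b_k^\tau}_{\mcal H_c}^2\) uniformly in \(k\) and \(\tau\), obtained by integrating the first-order variation~\eqref{eq:approx_dfts/ds} along \(s \mapsto (1-s)\mu_{k+1}^\tau + s\mu_k^\tau\) and controlling the remainder through the uniform spectral gap of \(K_\mu\) on \(\CX/\R\) (\cref{prop:analytic_HK_mu}) and the smoothness of the Schrödinger potentials on the compact \(\PX\). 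Composing the optimality condition with \(\tfrac{2}{\varepsilon}(\id - K_{\mu_{k+1}^\tau})\) and multiplying by \(b_{k+1}^\tau\), exactly as in the proof of \cref{thm:change_flow_b}, the scheme rewrites
\begin{equation*}
\frac{b_{k+1}^\tau - b_k^\tau}{\tau} + W b_{k+1}^\tau + \tilde p_{k+1}^\tau = -\frac{1}{\tau} R_k^\tau, \qquad \tilde p_{k+1}^\tau \in \tilde P b_{k+1}^\tau,
\end{equation*}
with \(\norm{R_k^\tau}_{\mcal H_c} \leq C\,\norm{b_{k+1}^\tau - b_k^\tau}_{\mcal H_c}^2\). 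The a priori bound then gives \(\int_0^T \tau\inv\norm{R^\tau_t}_{\mcal H_c}\dd t \leq C\tau \to 0\), so the residual is negligible in \(L^1([0, T]; \mcal H_c)\). Using the linearization and \(\norm{p_{k+1}^\tau}_\infty \lesssim \norm{V}_\infty + o(1)\), the piecewise constant pressures \(\bar{\tilde p}^\tau\) are uniformly bounded in \(L^\infty([0,T]; \CX)\); following the proof of \cref{prop:flow_ntoflow}, I pass to the limit by combining the uniform convergence of \(\bar b^\tau\) to \(b\), the continuity of \(W\) on \(\mcal B\) and the closed graph of \(\tilde P\) (both from \cref{prop:continuous_WP}) to identify the limit as a solution of \(\dot b + Wb + \tilde P b \ni 0\) starting at \(B(\bar\mu_0)\). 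Uniqueness (\refthmitem{thm:main}{contractivity}) then upgrades subsequential convergence to convergence of the full family \(\tau \to 0\).

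\textbf{Main obstacle.} The crux is the quantitative linearization in the third step, since this is precisely where the distinction between finite and general \(\X\) becomes decisive (\cref{rem:obstructions}). In general \(\X\), the operator \((\id - K_\mu^2)\inv\) fails to be uniformly bounded as \(\mu\) degenerates and the Schrödinger potentials lose regularity, blocking a uniform quadratic remainder estimate; in the finite-space setting both pathologies are ruled out by compactness of \(\PX\) and uniform invertibility of \((\id - K_\mu)\) on \(\CX/\R\), which reduces the remainder bound to a standard second-order Taylor estimate for the implicitly defined map \((\mu,\nu) \mapsto f_{\mu,\nu}\).
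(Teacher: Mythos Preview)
Your proposal is essentially correct and takes a genuinely different route from the paper.

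\textbf{The paper's approach.} The paper does not use a Taylor expansion with quadratic remainder. Instead it writes the difference of Schr\"odinger potentials as an \emph{exact} integral of its derivative along the interpolating curve, via \cref{prop:dfts/ds}:
\[
\bar q_t^\tau = -\frac{1}{2\tau}\int_{k\tau}^{(k+1)\tau} \frac{\partial f^\tau_{(k+1)\tau,s}}{\partial s}\,\dd s = \frac{1}{\tau}\int_{k\tau}^{(k+1)\tau} J^\tau_{(k+1)\tau,s}[\partial_s \hat b^\tau_s]\,\dd s,
\]
where $J^\tau_{t,s}$ is built from the generalized operators $K_{\mu,\nu}$, $H_{\mu,\nu}$ of \cref{def:kmunu}. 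It then shows pointwise convergence of the operator $s\mapsto J^\tau_{(\lfloor s/\tau\rfloor+1)\tau,s}$ and combines this with the weak $L^2$ convergence of $\partial_s\hat b^\tau_s$ to identify the limit of $\bar q^\tau$, and hence of $\bar p^\tau$, in weak $L^2$. No residual term is ever introduced.

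\textbf{Your approach.} You instead freeze the linearization at the endpoint, producing a discrete implicit Euler step for the monotone evolution $\dot b + Wb + \tilde Pb \ni 0$ plus a remainder $R_k^\tau = O(\|b_{k+1}^\tau - b_k^\tau\|^2)$, then kill the remainder in $L^1$ via the energy estimate $\sum_k\|b_{k+1}^\tau-b_k^\tau\|^2\leq C\tau$. This is the classical template for convergence of perturbed proximal schemes for monotone inclusions and is arguably more transparent; it also makes explicit why the scheme is a genuine discretization of~\eqref{eq:b_flat}. The price is that you need a \emph{uniform} $C^2$ bound on $(\mu,\nu)\mapsto f_{\mu,\nu}$ over $\PX\times\PX$, which in finite dimension follows from the implicit function theorem and compactness but which the paper avoids by staying at first order.

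\textbf{Two minor points.} First, for the coercivity $S_\varepsilon(\mu,\nu)\geq c\|B(\mu)-B(\nu)\|_{\mcal H_c}^2$ the paper uses a direct inequality (\cref{lemma:HcdistleqSinkhorn}, valid on any compact $\X$), which is cleaner than your local-expansion-plus-compactness route. Second, your claim that the pressures are bounded in $L^\infty([0,T];\CX)$ is not justified by the a~priori estimates, which only give an $L^2$ bound on the discrete velocity; but $L^2$ boundedness suffices for the weak-limit argument you sketch, so this is a harmless overstatement.
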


In order to prove this result we follow the classical strategy to derive limiting equations for minimizing movement schemes \cite{santambrogio17}: we first use the usual coercivity estimates to obtain existence of a converging subsequence, and then take the optimality conditions~\eqref{eq:optimality_SJKO} to the limit. This second step relies on the quantification of the error in~\eqref{eq:approx_dfts/ds}, and it involves generalizations of the operators \(K_\mu\) and \(H_\mu\) which we must first introduce.

\subsection{The derivative of a Schrödinger potential}\label{section:dfts/ds}

The definition and results of this section would also hold if $\mcal X$ is not finite.

\begin{definition}\label{def:kmunu}
If $\mu, \nu \in \PX$ define the kernel \(k_{\mu, \nu}\coloneqq \exp\brac{(f_{\mu, \nu} \oplus f_{\nu, \mu} - c) /\varepsilon}.
\)
We define the operators \(H_{\mu,\nu}:\MX \to \CX\) and \(K_{\mu,\nu}:\CX \to \CX\) by
\begin{align*}
\forall \sigma \in \MX,\: H_{\mu,\nu}[\sigma]&:x\mapsto \int_\X k_{\mu, \nu}(x,y) \dd \sigma(y) \\
\forall \phi \in \CX,\: K_{\mu, \nu}[\phi] &\coloneqq H_{\mu,\nu}[\phi\nu].
\end{align*}
\end{definition}

\noindent If $\mu = \nu$ then $H_{\mu,\mu} = H_\mu$ and $K_{\mu,\mu} = K_\mu$, see~\eqref{eq:def:H_mu} and~\eqref{eq:def:K_mu}.
The notation is chosen with this convention because \(H_{\mu, \nu}\) is actually valued in the space \(\mcal H_{\mu, \nu} \coloneqq \exp( f_{\mu,\nu}/\varepsilon)\mcal H_c\), and the latter coincides with $\mcal H_\mu$ when $\mu = \nu$, see Proposition~\ref{prop:isometry_RKHS}. As when \(\mu=\nu\), the Schrödinger system \eqref{eq:schrödingersystem} gives \(K_{\mu, \nu}[1] = 1\). We now state some results extending those of \cite[Section 3]{RGSD}.

\begin{proposition}\label{prop:compactoperators}
For any $\mu, \nu \in \PX$, the operators
\(H_{\mu,\nu}:\MX \to \CX\) and \(K_{\mu,\nu}: \CX \to \CX\) are compact.
\end{proposition}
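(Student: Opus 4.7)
The strategy is to reduce the question to Arzelà--Ascoli by observing that \(k_{\mu,\nu}\) is jointly continuous on the compact space \(\X\times\X\). This is the same proof that establishes compactness of \(H_\mu\) and \(K_\mu\) in \cite[Proposition 3.6]{RGSD}, applied with an asymmetric kernel.

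First I would verify that the Schrödinger potentials \(f_{\mu,\nu}\) and \(f_{\nu,\mu}\) are continuous on \(\X\); this is built into the variational formulation~\eqref{eq:dualOTeps} and the Schrödinger system~\eqref{eq:schrödingersystem}, which characterize these potentials as fixed points of the Sinkhorn map \(T_\varepsilon\) valued in \(\CX\). Consequently \(f_{\mu,\nu} \oplus f_{\nu,\mu} - c\) is continuous on the compact product \(\X \times \X\), and so is its exponential \(k_{\mu,\nu}\). Being continuous on a compact set, \(k_{\mu,\nu}\) is uniformly continuous and uniformly bounded; I denote \(M \coloneqq \|k_{\mu,\nu}\|_\infty\) and \(\omega\) a modulus of continuity of \(k_{\mu,\nu}\) on \(\X\times\X\).

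Next, to show that \(H_{\mu,\nu}: \MX \to \CX\) is compact, I would invoke Arzelà--Ascoli. For any \(\sigma \in \MX\) with total variation at most \(1\), integrating against \(|\sigma|\) gives
\begin{align*}
|H_{\mu,\nu}[\sigma](x)| &\leq M, \\
|H_{\mu,\nu}[\sigma](x) - H_{\mu,\nu}[\sigma](x')| &\leq \omega(\mathsf d(x,x')),
\end{align*}
where the second bound uses the pointwise estimate \(|k_{\mu,\nu}(x,y) - k_{\mu,\nu}(x',y)| \leq \omega(\mathsf d(x,x'))\). The image of the unit ball of \(\MX\) is therefore uniformly bounded and equicontinuous in \(\CX\), hence relatively compact, giving compactness of \(H_{\mu,\nu}\).

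Finally, I would deduce compactness of \(K_{\mu,\nu}:\CX \to \CX\) by factorization: writing \(I_\nu : \phi \mapsto \phi \nu\), one has \(\|I_\nu[\phi]\|_{\mathrm{TV}} \leq \|\phi\|_\infty\), so \(I_\nu:\CX \to \MX\) is bounded, and \(K_{\mu,\nu} = H_{\mu,\nu} \circ I_\nu\) is compact as the composition of a bounded operator with a compact one. There is no real obstacle here beyond the continuity of the Schrödinger potentials, which is a standard consequence of the compactness of \(\X\) and continuity of \(c\); the argument is essentially identical to the symmetric case already treated in \cite{RGSD}.
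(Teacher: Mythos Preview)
Your proof is correct and follows exactly the paper's approach: both reduce to the observation that \(k_{\mu,\nu}\) is continuous on the compact \(\X\times\X\), so the argument of \cite[Proposition 3.6]{RGSD} applies verbatim with this asymmetric kernel. The paper's proof simply cites that reference and notes the same point, whereas you spell out the Arzelà--Ascoli step explicitly.
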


\begin{proof}
\cite[Proposition 3.6]{RGSD} proves the result when \(\mu=\nu\), and the proof still works when \(\mu \neq \nu\): we only need $k_{\mu,\nu}$ to be a continuous function on the compact space $\X \times \X$ for the integral operators $H_{\mu,\nu}$ and $K_{\mu,\nu}$ to be compact. 
\end{proof}

\noindent We also need some results on the operator \(\id-K_{\mu, \nu}K_{\nu, \mu}\), which should be thought as the generalization of $\id - K_\mu^2$.

\begin{proposition}\label{prop:K_munu}
For any $\mu, \nu \in \PX$, the operator \(\id-K_{\mu, \nu}K_{\nu, \mu}\) is invertible on \(\CX/\R\).
\end{proposition}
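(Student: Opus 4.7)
The plan is to combine Fredholm theory on $\CX/\R$ with a strong maximum principle that exploits the positivity of the kernel $k_{\mu,\nu}$ together with the normalization $K_{\mu,\nu}[1]=1$ coming from the Schrödinger system~\eqref{eq:schrödingersystem}.

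First I would check that $\id - K_{\mu,\nu}K_{\nu,\mu}$ is well-defined and Fredholm of index zero on $\CX/\R$. Since $k_{\mu,\nu}$ and $k_{\nu,\mu}$ are strictly positive continuous kernels with $\int k_{\mu,\nu}(x,\cdot) \dd \nu = 1$ and $\int k_{\nu,\mu}(y,\cdot) \dd \mu = 1$ (these are the Schrödinger equations~\eqref{eq:schrödingersystem}), the operators $K_{\mu,\nu}, K_{\nu,\mu}:\CX\to\CX$ both send the constant function $1$ to itself, so they descend to bounded operators on $\CX/\R$. By \cref{prop:compactoperators} they are also compact on $\CX$, hence on $\CX/\R$, and therefore $K_{\mu,\nu}K_{\nu,\mu}$ is compact on $\CX/\R$. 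The Fredholm alternative then reduces invertibility of $\id - K_{\mu,\nu}K_{\nu,\mu}$ on $\CX/\R$ to injectivity, i.e.\ to showing that any $\phi \in \CX$ with $K_{\mu,\nu}K_{\nu,\mu}[\phi]=\phi$ is constant.

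Next I would introduce $\psi := K_{\nu,\mu}[\phi] \in \CX$, so that the identity $K_{\mu,\nu}K_{\nu,\mu}[\phi]=\phi$ is equivalent to the coupled system
\begin{equation*}
\phi(x) = \int_{\mcal X} k_{\mu,\nu}(x,y)\,\psi(y)\,\dd\nu(y), \qquad \psi(y) = \int_{\mcal X} k_{\nu,\mu}(y,x)\,\phi(x)\,\dd\mu(x),
\end{equation*}
where by the Schrödinger normalization both are integrals of $\phi$ (resp.\ $\psi$) against a probability measure. A standard strong-max-principle argument then closes the proof: setting $M=\max_{\mcal X}\phi$, attained at some $x^\star$ by compactness, the first equation gives $M=\int k_{\mu,\nu}(x^\star,y)\psi(y)\dd\nu(y)$ with $\psi\le M$ and $k_{\mu,\nu}(x^\star,\cdot)>0$, which forces $\psi=M$ $\nu$-a.e.; substituting back in the first equation yields $\phi\equiv M$ on all of $\mcal X$. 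The analogous argument with $\min$ is not even needed, and $\phi$ is constant.

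The potential obstacle is only notational: one must keep track that $K_{\mu,\nu}[1]=1$ (and symmetrically for $(\nu,\mu)$) genuinely relies on the Schrödinger system, not merely on the construction of $k_{\mu,\nu}$, and that the strict positivity of the kernel is what turns the inequality $\phi\le M$ into the pointwise equality needed to conclude. Apart from this, the proof is essentially algebraic once the Fredholm reduction and the normalization of $k_{\mu,\nu}$ are in place.
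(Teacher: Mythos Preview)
Your argument is correct. The Fredholm reduction is sound (the operators descend to $\CX/\R$ because $K_{\mu,\nu}[1]=1$, and compactness passes to the quotient), and the maximum-principle step is clean: once $\psi=M$ $\nu$-a.e., the identity $\phi(x)=\int k_{\mu,\nu}(x,y)\psi(y)\,\dd\nu(y)$ together with $K_{\mu,\nu}[1]=1$ forces $\phi\equiv M$ everywhere.

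The paper takes a different, more quantitative route. Rather than reducing to injectivity via Fredholm and then invoking a maximum principle, it shows directly that $K_{\mu,\nu}$ is a strict contraction on $\CX/\R$: the oscillation norm satisfies $\|K_{\mu,\nu}[\phi]\|_{\CX/\R}\le q\,\|\phi\|_{\CX/\R}$ with an explicit $q=1-\exp(-4\|c\|_\infty/\varepsilon)<1$, by the same Birkhoff--Hopf-type argument as in \cite[Proposition~3.8]{RGSD}. This immediately gives $\|K_{\mu,\nu}K_{\nu,\mu}\|_{\CX/\R\to\CX/\R}\le q^2<1$, so $\id-K_{\mu,\nu}K_{\nu,\mu}$ is invertible by a Neumann series, with an explicit bound on the inverse. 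Your argument and the paper's are really two sides of the same coin---the contraction estimate is a quantified maximum principle---but the paper's version avoids the Fredholm machinery entirely and yields uniform control in $(\mu,\nu)$, which is useful downstream (e.g.\ when passing to limits in \cref{prop:dfts/ds}).
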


\begin{proof}
The quotient norm in \(\CX/\R\) is defined by \(\norm{\phi}_{\CX/\R} \coloneqq \inf_{\lambda \in \R}\norm{\phi - \lambda}_{\infty}\). The same proof as \cite[Proposition 3.8]{RGSD} yields \( \norm{K_{\mu, \nu}[\phi]}_{\CX/\R} \leq q \norm{\phi}_{\CX/\R}\) with $q = 1 - \exp(- 4 \| c \|_\infty / \varepsilon) \in (0,1)$. The result follows from the Fredholm alternative, see the proof of \cite[Theorem 3.9]{RGSD}.
\end{proof}

We can compute the derivative of the Schrödinger potentials and generalize \cite[Proposition 3.14]{RGSD} in order to quantify the error in the approximation~\eqref{eq:approx_dfts/ds}. 

\begin{proposition}\label{prop:dfts/ds}
Assume \((\mu_t)_{t \in I}\) is a curve valued in \(\mcal{P(X)}\) and weakly-\txtstar{} differentiable in \(\MX\), and with $t \mapsto \dot{\mu}_t \in \MX$ weak-\txtstar{} continuous. Denote \(f_{t,s} \coloneqq f_{\mu_t, \mu_s}\), \(K_{t,s} \coloneqq K_{\mu_t, \mu_s}\) and \(H_{t, s} \coloneqq H_{\mu_t, \mu_s}\). Then $s \mapsto f_{t,s} \in \mcal C^1(I;\CX)$ and in $\CX / \R$,
\begin{equation}\label{eq:dfts/ds}
\frac{\partial f_{t,s}}{\partial s} = -\varepsilon\brac{\id - K_{t, s}K_{s,t}}\inv H_{t, s}[\Dot \mu_s].
\end{equation}
\end{proposition}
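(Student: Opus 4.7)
The plan is to differentiate the Schrödinger system~\eqref{eq:schrödingersystem}. Abbreviate $f_s := f_{t,s}$ and $g_s := f_{s,t}$, which satisfy $f_s = T_\varepsilon(g_s, \mu_s)$ and $g_s = T_\varepsilon(f_s, \mu_t)$ under the chosen normalization $\lrangle{\mu_t, f_s} = \lrangle{\mu_s, g_s}$. The first task is to establish that $s \mapsto f_s, g_s$ are continuous from $I$ into $\CX$. This is standard stability of Schrödinger potentials under weak-\txtstar{} perturbation of the marginals, following e.g.\ from the uniform contractivity of the Sinkhorn iteration on $\CX/\R$ with rate $q = 1 - \exp(-4\|c\|_\infty/\varepsilon)$ (as used in the proof of \cite[Proposition 3.8]{RGSD}) together with continuity of $T_\varepsilon$ in its marginal argument.

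Second, assuming differentiability for the moment, a direct linearization of the log-sum-exp formula~\eqref{eq:TSinkhorn}, combined with the identity $\int k_{t,s}(x,y) \dd\mu_s(y) = 1$ coming from the Schrödinger system, yields in $\CX$
\[
\partial_s f_s = -K_{t,s}[\partial_s g_s] - \varepsilon H_{t,s}[\dot\mu_s], \qquad \partial_s g_s = -K_{s,t}[\partial_s f_s].
\]
The second equation lacks a direct $\dot\mu_s$ contribution because $\mu_t$ is held fixed. Eliminating $\partial_s g_s$ gives $(\id - K_{t,s} K_{s,t})[\partial_s f_s] = -\varepsilon H_{t,s}[\dot\mu_s]$, and inverting the operator on $\CX/\R$ via Proposition~\ref{prop:K_munu} yields~\eqref{eq:dfts/ds}.

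The remaining point is to justify rigorously that $s \mapsto f_s \in \mcal C^1(I; \CX)$. I would apply the implicit function theorem to
\[
\Phi: (\CX/\R) \times \PX \to \CX/\R, \qquad (f, \nu) \mapsto f - T_\varepsilon(T_\varepsilon(f, \mu_t), \nu),
\]
which is well-defined on the quotient in $f$ because $T_\varepsilon(\cdot, \mu)$ shifts its output by $-\lambda$ when its input is shifted by $\lambda$, so that the double composition is invariant under constant shifts. We have $\Phi(f_s, \mu_s) = 0$, and the partial Fréchet derivative $\partial_f \Phi|_{(f_s, \mu_s)} = \id - K_{t,s} K_{s,t}$ is an isomorphism of $\CX/\R$ by Proposition~\ref{prop:K_munu}. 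Continuity of $s \mapsto \partial_s f_s$ then follows from the fact that $k_{t,s}, k_{s,t}$ depend continuously on $(f_s, g_s) \in \CX^2$, combined with Proposition~\ref{prop:compactoperators} and weak-\txtstar{} continuity of $\dot\mu_s$. The main technical obstacle is the handling of the parameter space $\PX$: either one embeds it into a Banach space (e.g.\ $\MX$ equipped with a norm metrizing weak-\txtstar{} convergence) so that the Fréchet derivative $\partial_\nu \Phi[\sigma] = \varepsilon H_{t,s}[\sigma]$ is a bounded linear map, or one sidesteps the IFT by writing the finite difference $f_{s+h} - f_s$ directly from the Schrödinger system and carrying out a first-order expansion, whose remainder is controlled using the continuity established in the first step.
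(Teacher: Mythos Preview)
Your proposal is correct and follows essentially the same route as the paper: apply the implicit function theorem to the map $f \mapsto f - T_\varepsilon(T_\varepsilon(f,\mu_t),\cdot)$, using that its $f$-derivative at the solution is $\id - K_{t,s}K_{s,t}$, invertible on $\CX/\R$ by Proposition~\ref{prop:K_munu}. The heuristic linearization you spell out is exactly the paper's computation of $D_1 T_\varepsilon$ and $D_2 T_\varepsilon$.

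The one place where the paper is cleaner is precisely the point you flag as a ``technical obstacle.'' Rather than taking $\nu\in\PX$ as the parameter and then worrying about a Banach structure on $\PX$ (or bypassing the IFT via finite differences), the paper composes with $s\mapsto\mu_s$ from the outset and applies the IFT to
\[
\tilde T:(\CX/\R)\times I\to\CX/\R,\qquad \tilde T(f,s)=f-T_\varepsilon(T_\varepsilon(f,\mu_t),\mu_s).
\]
Since $I\subset\R$, the parameter space is already Banach, and one only needs that $(f,s)\mapsto T_\varepsilon(f,\mu_s)$ is $\mcal C^1$ on $\CX\times I$; this follows directly from the explicit formulas~\eqref{eq:D1Teps}--\eqref{eq:D2Teps} together with the assumed weak-\txtstar{} differentiability of $(\mu_s)$ and weak-\txtstar{} continuity of $(\dot\mu_s)$. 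This sidesteps both of your proposed workarounds and makes the $\mcal C^1$ regularity in $s$ immediate.
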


\begin{proof}
We will follow the same method as the proof of \cite[Lemma 3.13]{RGSD}. The existence of derivatives is obtained by applying the implicit function theorem \cite[Theorem 10.2.1]{dieudonneanalysis} to the function $\tilde{T} : \CX / \R \times I \to \CX / \R$, where, recalling that $T_\varepsilon$ is defined in~\eqref{eq:TSinkhorn}, 
\begin{equation}
\Tilde T(f,s) =  f -  T_\varepsilon(T_\varepsilon(f, \mu_t), \mu_s)
\end{equation}
and \(t\) is fixed. The optimal conditions given by the Schrödinger system \eqref{eq:schrödingersystem} characterize \(f_{t,s}\) as the unique solution in \(\CX/\R\) of \(\Tilde T(f, s) = 0\). 

A direct computation yields that 
\begin{align}
D_1T_\varepsilon(f, \mu)[\phi](x) &= -\frac{\lrangle{\mu, \phi\exp(f/\varepsilon)k_c(x, \cdot)}}{\lrangle{\mu, \exp(f/\varepsilon)k_c(x, \cdot)}}\label{eq:D1Teps}\\
D_2T_\varepsilon(f, \mu)[\Dot \mu](x) &= -\varepsilon\frac{\lrangle{\Dot \mu,\exp(f/\varepsilon)k_c(x, \cdot)}}{\lrangle{\mu,\exp(f/\varepsilon)k_c(x, \cdot)}}.\label{eq:D2Teps}
\end{align}
In particular given the assumption of regularity on $(\mu_t)_t$ and $(\dot{\mu_t})_t$, the map $(f,t) \mapsto T_\varepsilon(f,\mu_t)$ is of class $\mcal C^1$ on $\CX \times I$, so that the map $\Tilde T$ is too.

Using that in \eqref{eq:D1Teps} the denominator is $\exp(-f_{t,s}/\varepsilon)$ when evaluated at $(f_{s,t},\mu_s)$, we find $D_1T_\varepsilon(f_{s,t}, \mu_s)[\phi] = - K_{t,s}[\phi]$ and \(D_1T_\varepsilon(f_{t,s}, \mu_t)[\phi] = - K_{s,t}[\phi]\). Using the chain rule, we deduce that 
\begin{equation}
\label{eq:D1T~}
D_1\Tilde T(f_{t,s}, s) = \id - D_1T_\varepsilon(f_{s,t}, \mu_s)D_1T_\varepsilon(f_{t,s}, \mu_t)  = \id - K_{t,s}K_{s,t}
\end{equation}
On the other hand using~\eqref{eq:D2Teps} and again the Schrödinger system, 
\begin{equation}
\label{eq:D2T~}
\frac{\partial \Tilde T(f_{t,s}, s)}{\partial s} = \varepsilon H_{t,s}[\dot{\mu}_s]. 
\end{equation}
We can apply the implicit function theorem since $D_1\Tilde T(f_{t,s}, s)$ is invertible on \(\CX/\R\) by \cref{prop:K_munu}. It yields that $s \mapsto f_{t,s}$ is of class $\mathcal{C}^1$ and 
\begin{equation*}
\frac{\partial f_{t,s}}{\partial s} = - \brac{D_1\Tilde T(f_{t,s}, s)}\inv\brac{\frac{\partial \Tilde T(f_{t,s}, s)}{\partial s}}.
\end{equation*}
Substituting the expressions obtained in~\eqref{eq:D1T~} and ~\eqref{eq:D2T~} we obtain the conclusion. 
\end{proof}

Notice that \cref{prop:dfts/ds} applies only when the derivative \(\Dot \mu\) is valued and continuous in \(\MX\), which is part of the reason why \cref{thm:tauto0} is proven only for a finite space. We will comment further on this difficulty in \cref{rem:obstructions}.

\subsection{Limit \texorpdfstring{\(\tau \to 0\)}{τ→0} and proof of \texorpdfstring{\cref{thm:tauto0}}{Theorem~\ref{thm:tauto0}}}

We can now move towards the proof of \cref{thm:tauto0}. As mentioned above, the proof is in two steps: first compactness, and then limit of the optimality conditions. Since the first one is provable for any compact space and may be used in the future to prove a generalization of \cref{thm:tauto0}, we state it as a separate lemma. We start by quickly recalling the following coercivity estimate from \cite{séjournéunbalanced} necessary to relate the Sinkhorn divergence and the norm distance after the change of variables.

\begin{lemma}\label{lemma:HcdistleqSinkhorn}
If \(\mu, \nu \in \mcal{P(X)}\) then it holds
\begin{equation*}
\| B(\mu) - B(\nu)\|_{\mcal H_c}^2 \leq \frac2\varepsilon S_\varepsilon\brac{\mu, \nu}.
\end{equation*}
\end{lemma}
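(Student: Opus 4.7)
The plan is to exploit the fact that the squared distance in $\mcal H_c$ unfolds nicely because $B(\mu)$ sits on the unit sphere of $\mcal H_c$, and then to compare the resulting inner product expression with a suboptimal evaluation of the dual entropic transport problem.

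First I would use $\|B(\mu)\|_{\mcal H_c} = \|B(\nu)\|_{\mcal H_c} = 1$ (from \cref{thm:embeddingB}) to reduce the statement to
\begin{equation*}
\lrangle{B(\mu),B(\nu)}_{\mcal H_c} \geq 1 - \frac{1}{\varepsilon} S_\varepsilon(\mu,\nu).
\end{equation*}
Using $B(\mu) = H_c[\exp(f_\mu/\varepsilon)\mu]$ and the analogous identity for $\nu$ (which follow from \eqref{eq:Bdef}), the reproducing property of $\mcal H_c$ gives the explicit formula
\begin{equation*}
\lrangle{B(\mu),B(\nu)}_{\mcal H_c} = \iint_{\X\times\X} \exp\!\brac{\tfrac{1}{\varepsilon}\brac{f_\mu(x) + f_\nu(y) - c(x,y)}} \dd\mu(x)\dd\nu(y).
\end{equation*}

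Next I would bound $\OT_\varepsilon(\mu,\nu)$ from below by plugging the suboptimal test functions $(f_\mu, f_\nu)$ into the dual formulation \eqref{eq:dualOTeps}:
\begin{equation*}
\OT_\varepsilon(\mu,\nu) \geq \lrangle{\mu,f_\mu} + \lrangle{\nu,f_\nu} + \varepsilon - \varepsilon\lrangle{B(\mu),B(\nu)}_{\mcal H_c}.
\end{equation*}
On the diagonal, the optimal plan in \eqref{eq:OTeps} has mass one, so $\iint \exp((f_\mu\oplus f_\mu - c)/\varepsilon)\dd(\mu\otimes\mu) = 1$, which forces the exponential term in \eqref{eq:dualOTeps} to vanish at optimum. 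Together with the chosen normalization $\lrangle{\mu,f_\mu} = \lrangle{\mu,g_\mu}$ this yields $\OT_\varepsilon(\mu,\mu) = 2\lrangle{\mu,f_\mu}$, and similarly $\OT_\varepsilon(\nu,\nu) = 2\lrangle{\nu,f_\nu}$.

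Finally, subtracting these diagonal identities from the lower bound above gives
\begin{equation*}
S_\varepsilon(\mu,\nu) \geq \varepsilon\brac{1 - \lrangle{B(\mu),B(\nu)}_{\mcal H_c}} = \tfrac{\varepsilon}{2}\norm{B(\mu)-B(\nu)}_{\mcal H_c}^2,
\end{equation*}
which is the claim. There is no real obstacle in the argument; the only point to be careful about is the normalization of the Schrödinger potentials, since the identities $\OT_\varepsilon(\mu,\mu) = 2\lrangle{\mu,f_\mu}$ use in a crucial way that the convention $\lrangle{\mu,f_{\mu,\nu}} = \lrangle{\nu,f_{\nu,\mu}}$ adopted in Section~\ref{sec:formal} specializes on the diagonal to $f_\mu = g_\mu$.
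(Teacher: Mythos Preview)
Your argument is correct. Expanding the squared norm using the unit-sphere property, writing the cross term $\lrangle{B(\mu),B(\nu)}_{\mcal H_c}$ as the double integral of $\exp((f_\mu\oplus f_\nu-c)/\varepsilon)$, and then bounding $\OT_\varepsilon(\mu,\nu)$ from below by evaluating the dual at the suboptimal pair $(f_\mu,f_\nu)$ all go through exactly as you say; the diagonal identities $\OT_\varepsilon(\mu,\mu)=2\lrangle{\mu,f_\mu}$ are indeed consequences of the normalization $f_\mu=g_\mu$ and the unit mass of the optimal self-coupling.

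The paper takes a different route: it simply quotes the bound $\|e^{f_\mu/\varepsilon}\mu - e^{f_\nu/\varepsilon}\nu\|_{\mcal H_c^*}^2 \le \tfrac{2}{\varepsilon}S_\varepsilon(\mu,\nu)$ from \cite[Proposition~16]{séjournéunbalanced} and then applies the isometry $H_c:\mcal H_c^*\to\mcal H_c$ together with $B(\mu)=H_c[e^{f_\mu/\varepsilon}\mu]$. Your proof is essentially an unrolled, self-contained version of that cited inequality, with the added benefit that it makes transparent \emph{why} the constant is exactly $2/\varepsilon$: it comes straight from the dual gap when one plugs the self-transport potentials into the off-diagonal problem. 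The paper's approach is shorter but opaque; yours exposes the mechanism and avoids the external dependence.
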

\begin{proof}
We have \(\| e^{\frac{f_\mu}{\varepsilon}}\mu-e^{\frac{f_\nu}{\varepsilon}}\nu\|_{\mcal H_c^*}^2 \leq \frac2\varepsilon S_\varepsilon\brac{\mu, \nu}\) from
\cite[Proposition 16]{séjournéunbalanced}, and given the definition of $B$ the result follows as \(H_c : \mcal H_c^* \to \mcal H_c\) is an isometry.
\end{proof}

\noindent We deduce the compactness of the sequence given the scheme~\eqref{eq:SJKO}. 
In this part, to avoid using the cumbersome notation $\dot{\hat{b}}^\tau_t$ for the time derivative of $\hat{b}^\tau$, we rather see $b^\tau$ as a function defined on $[0,+\infty) \times \X$ and write $\partial_t \hat{b}^\tau_t$ for such a temporal derivative.

\begin{lemma}\label{lemma:tauto0compactness}
For \(\tau >0\), let \(\brac{\mu_k^\tau}_k\) be the sequence given by the scheme \eqref{eq:SJKO} with initialization \(\bar{\mu}_0 \in \mcal{P(X)}\), \(b_k^\tau \coloneqq B(\mu_k^\tau)\) its embedding, \(\brac{\sbar b_t^\tau}_{t \geq 0}\) its piecewise constant interpolation, and \( (\hat b_t^\tau)_{t \geq 0}\) its piecewise geodesic interpolation for \(\mathsf d_S\). 

Then for any \(T>0\), as \(\tau \to 0\) and up to a subsequence, \(\sbar b^\tau\) and \( \hat b^\tau\) both converge uniformly on \([0, T]\) for \(\norm{\cdot}_{\mcal H_c}\) to a curve \((b_t)_{t \geq 0}\in\mathscr H^1([0, T]; \mcal H_c)\), and the derivative of \(\hat b^\tau\) converges weakly in \(L^2([0, T]; \mcal H_c)\) to the derivative of \(b\).
\end{lemma}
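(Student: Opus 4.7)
The plan is to follow the classical Minimizing Movement strategy: use the variational inequality satisfied by the iterates to derive a uniform $\mathscr H^1$-bound on $\hat b^\tau$, then invoke the Aubin-Lions-type compactness of \cref{thm:compactness_H1} on the compact target $\mcal B$. First, using $\mu_k^\tau$ as a competitor for $\mu_{k+1}^\tau$ in \eqref{eq:SJKO} together with $S_\varepsilon(\mu_k^\tau, \mu_k^\tau) = 0$ yields the telescopic decrease
\begin{equation*}
E(\mu_{k+1}^\tau) + \frac{1}{2\tau}\, S_\varepsilon(\mu_{k+1}^\tau, \mu_k^\tau) \leq E(\mu_k^\tau),
\end{equation*}
so that summing over $k$ and bounding $E$ below by $\min V$ gives $\sum_k S_\varepsilon(\mu_{k+1}^\tau, \mu_k^\tau)/\tau \leq 2\,\mathrm{osc}(V)$. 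By \cref{lemma:HcdistleqSinkhorn} this transfers to $\sum_k \|b_{k+1}^\tau - b_k^\tau\|^2_{\mcal H_c}/\tau \leq (4/\varepsilon)\,\mathrm{osc}(V)$, and through \cref{prop:equivalence_distance_curves} to a comparable bound on $\sum_k \mathsf{d}_S(\mu_k^\tau, \mu_{k+1}^\tau)^2/\tau$.

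Next, for each index $k$, the fact that $\hat b^\tau|_{[k\tau, (k+1)\tau]}$ is a constant-speed $\mathsf{d}_S$-geodesic between $b_k^\tau$ and $b_{k+1}^\tau$ means that its Dirichlet energy in the metric tensor $\tilde{\mathbf g}$ equals $\mathsf{d}_S(\mu_k^\tau, \mu_{k+1}^\tau)^2/\tau$ (existence of such a minimizer being guaranteed by the comment following \eqref{eq:equivalent_metric_tensor_flat}). Using the lower bound in \eqref{eq:equivalent_metric_tensor_flat} to pass from $\tilde{\mathbf g}_{b}$ to the flat $\mcal H_c$-norm and summing over $k$ with $k\tau \leq T$, I would obtain a uniform bound
\begin{equation*}
\int_0^T \|\partial_t \hat b^\tau_t\|^2_{\mcal H_c}\, dt \leq \frac{C'}{\varepsilon}\, \mathrm{osc}(V),
\end{equation*}
independent of $\tau$. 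Since $\hat b^\tau$ is valued in $\mcal B$, which is compact in $\mcal H_c$ by \cref{thm:embeddingB}, \cref{thm:compactness_H1} yields, up to extraction, a curve $b \in \mathscr H^1([0,T]; \mcal H_c)$ such that $\hat b^\tau \to b$ uniformly in $\mcal C([0,T]; \mcal H_c)$ and $\partial_t \hat b^\tau \rightharpoonup \partial_t b$ weakly in $L^2([0,T]; \mcal H_c)$.

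Finally, to see that $\bar b^\tau$ shares the same limit, I use that non-negativity of each summand in the bound from Step~1 implies an individual-term estimate $\mathsf{d}_S(\mu_k^\tau, \mu_{k+1}^\tau)^2 \leq 4\tau\,\mathrm{osc}(V)/\varepsilon$, i.e. $\mathsf{d}_S(\mu_k^\tau, \mu_{k+1}^\tau) = O(\sqrt{\tau})$ uniformly in $k$. For $t \in [k\tau, (k+1)\tau)$, the point $\hat b^\tau_t$ lies on the $\mathsf{d}_S$-geodesic from $b_k^\tau = \bar b^\tau_t$ to $b_{k+1}^\tau$, so \cref{prop:equivalence_distance_curves} gives $\|\hat b^\tau_t - \bar b^\tau_t\|_{\mcal H_c} \leq C\, \mathsf{d}_S(\mu_k^\tau, \mu_{k+1}^\tau) = O(\sqrt{\tau})$, uniformly in $t$. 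Hence $\bar b^\tau$ converges uniformly to the same limit $b$. The only non-routine step is the identification of the geodesic Dirichlet energy with $\mathsf{d}_S(\mu_k^\tau, \mu_{k+1}^\tau)^2/\tau$, for which one relies on the existence of $\mathsf{d}_S$-geodesics together with the saturation inequality length$^2 \leq T \cdot$ energy for constant-speed minimizers; everything else is standard.
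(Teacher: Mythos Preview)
Your proof is correct and follows essentially the same approach as the paper: derive the telescopic energy estimate, convert it via \cref{lemma:HcdistleqSinkhorn} and \cref{prop:equivalence_distance_curves} into a uniform $\mathscr H^1$-bound on the geodesic interpolant using \eqref{eq:equivalent_metric_tensor_flat}, and then apply \cref{thm:compactness_H1} on the compact set $\mcal B$. The only minor difference is in the last step: the paper bounds $\|\hat b_t^\tau - \bar b_t^\tau\|_{\mcal H_c}$ by Morrey's inequality \eqref{eq:morrey} using the already-established $L^2$-bound on $\partial_t \hat b^\tau$, whereas you obtain the same $O(\sqrt{\tau})$ via the single-step estimate $S_\varepsilon(\mu_{k+1}^\tau,\mu_k^\tau)\leq 2\tau\,\mathrm{osc}(V)$ combined with \cref{prop:equivalence_distance_curves}; both are equally valid.
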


\begin{proof}
Using the suboptimality of \(\mu_k^\tau\) in the scheme~\eqref{eq:SJKO} gives
\begin{equation*}
E(\mu_{k+1}^\tau) + \frac{S_\varepsilon(\mu_k^\tau, \mu_{k+1}^\tau)}{2\tau} \leq E(\mu_k^\tau).
\end{equation*}
For any \(\ell\), we sum over \(k\leq \ell\) to get from the boundedness of \(E\)
\begin{equation*}
\sum_{k=0}^\ell \frac{S_\varepsilon(\mu_k^\tau, \mu_{k+1}^\tau)}{2\tau} \leq E(\mu_0^\tau) - E(\mu_{\ell+1}^\tau) \leq C
\end{equation*}
where \(C>0\) is a constant independent on $\tau$. We will abusively denote \(C\) various such constants and not bother with their exact expressions for simplicity.
Since we just proved in \cref{lemma:HcdistleqSinkhorn} that \({\norm{b_{k+1}^\tau-b_k^\tau}_{\mcal H_c}^2 \leq \frac2\varepsilon S_\varepsilon(\mu_k^\tau, \mu_{k+1}^\tau)}\) and moreover as \(\mathsf d_S(\mu_{k+1}^\tau, \mu_k^\tau) \leq C\norm{b_{k+1}^\tau-b_k^\tau}_{\mcal H_c}\) (\cref{prop:equivalence_distance_curves}), we deduce 
\begin{equation*}
\sum_{k=0}^\ell \frac{\mathsf d_S(\mu_{k+1}^\tau, \mu_k^\tau)^2}{2\tau} \leq C.
\end{equation*}
Recall that $\hat b^\tau$ is the piecewise geodesic interpolation, so that for $t \in (k \tau, (k+1) \tau)$, the expression 
$\Tilde{\mathbf g}_{\hat b^\tau_t}( \partial_t \hat{b}^\tau_t, \partial_t \hat{b}^\tau_t )$ 
is constant and coincides with $\mathsf d_S(\mu_{k+1}^\tau, \mu_k^\tau)^2 / \tau$. Thus the previous bound implies $\int_0^T\Tilde{\mathbf g}_{\hat b^\tau_t}(\partial_t \hat{b}^\tau_t,\partial_t \hat{b}^\tau_t)\dd t\leq C$. From the bound~\eqref{eq:equivalent_metric_tensor_flat}, we deduce 
\begin{equation*}
\int_0^T \| \partial_t \hat{b}^\tau_t \|_{\mcal H_c}^2\dd t \leq C \int_0^T\Tilde{\mathbf g}_{\hat b^\tau_t} ( \partial_t \hat{b}^\tau_t,\partial_t \hat{b}^\tau_t ) \dd t\leq C.
\end{equation*}
As moreover $\hat b^\tau$ takes its values in the compact set $\mcal B$, standard arguments recalled in \cref{thm:compactness_H1} yield that, up to a subsequence, $(\hat b^\tau)_\tau$ converges uniformly to a limit $b$, and $(t\mapsto \partial_t \hat{b}^\tau_t )_\tau$ converges weakly in $L^2([0, T];\mcal H_c)$ to $\dot{b}$. Morrey's inequality \eqref{eq:morrey} yields $\| \hat{b}^\tau_t - \sbar b^\tau_t \|_{\mcal H_c} \leq C  \sqrt{\tau}$ uniformly in $t$, thus the piecewise constant interpolation \(\sbar b^\tau\) converges uniformly to the same limit $b$.
\end{proof}

We can now conclude the proof of \cref{thm:tauto0} by showing that the limit given by \cref{lemma:tauto0compactness} is the Sinkhorn potential flow in the case of a finite space. We now assume \(\X = \cbrac{x_1, \dots, x_n}\), identify \(\MX = \CX = \mcal H_c = \R^n\), and operators on these spaces are identified with \(n\times n\) matrices. We will omit the topology considered when dealing with limits in these spaces since all weak and norm topologies become equivalent.

\begin{proof}[\textbf{Proof of \cref{thm:tauto0}}]
With the same notations as in \cref{thm:tauto0}, and by \cref{lemma:tauto0compactness} we already have convergence of $\hat b^\tau$, $\partial_t \hat{b}^\tau_t$ and $\bar{b}^\tau$ up to subsequences to a limit. We write $\mu_t = B^{-1}(b_t)$ and $\hat\mu^\tau_t = B^{-1}(\hat{b}^\tau_t)$ for any $t \geq 0$.  

The optimality conditions of the scheme~\eqref{eq:SJKO} were derived in \cref{lm:optimality_SJKO}, they read as an equality in $\CX / \R$:
\begin{equation*}
\frac{1}{2\tau}\brac{f_{\mu_{k+1}^\tau, \mu_k^\tau} - f_{\mu_{k+1}^\tau}} + V + p_{k+1}^\tau = 0
\end{equation*}
for some \(p_{k+1}^\tau \in P \mu_{k+1}^\tau\). We write the piecewise interpolations
%\begin{equation*}
%\bar q_t^\tau \coloneqq \frac{1}{2\tau}\brac{f_{(k+1)\tau, k\tau}^\tau - f_{(k+1)\tau, (k+1)\tau}^\tau} = \frac1\tau \int_{k\tau}^{(k+1)\tau}\widehat J^\tau_{s} [\partial_s \hat{b}^\tau_s ] \dd s \quad \text{for } t\in[k\tau, (k+1)\tau),
%\end{equation*}
\begin{equation*}
\bar q_t^\tau \coloneqq \frac{1}{2\tau}\brac{f_{\mu_{k+1}^\tau, \mu_k^\tau} - f_{\mu_{k+1}^\tau}} \qquad \text{for } t\in[k\tau, (k+1)\tau),
\end{equation*}
as well as
\begin{equation*}
\bar p_t^\tau \coloneqq p_{k+1}^\tau\ \qquad \text{for } t\in[k\tau, (k+1)\tau),
\end{equation*}
so that we have $\bar q_t^\tau + V + \bar p^\tau_t = 0$ in $\CX/ \R$, at least for $t \in [k\tau, (k+1)\tau)$. We now pass this equation to the limit $\tau \to 0$.

\medskip 

\noindent \emph{Limit of $\bar q^\tau$.}
% We analyze the limit of the term $\bar q_t^\tau$.
Writing \(f_{t, s}^\tau \coloneqq f_{\hat\mu_t^\tau, \hat\mu_s^\tau}\) as well as $H_{t,s}^\tau = H_{\hat\mu^\tau_t, \hat\mu^\tau_s}$ and $K_{t,s}^\tau = K_{\hat\mu^\tau_t,\hat\mu^\tau_s}$, we write the difference of Schrödinger potentials as the integral of its derivative i.e.
\begin{equation*}
\bar q_t^\tau = \frac{1}{2\tau}\brac{f_{(k+1)\tau, k\tau}^\tau - f_{(k+1)\tau, (k+1)\tau}^\tau} = -\frac{1}{2\tau}\int_{k\tau}^{(k+1)\tau}\frac{\partial f^\tau_{(k+1)\tau, s}}{\partial s}\dd s.
\end{equation*}
Since there is no ambiguity between the functional spaces at hand (\cref{rem:finiteX}), \cref{prop:dfts/ds} applies which combined with \(H^\tau_s[\partial_s \hat\mu_s^\tau] = (\id + K^\tau_s)[(b_s^\tau)^{-1}\partial_s \hat{b}^\tau_s]\) (\cref{prop:change_mudot_bdot}) yields
\begin{equation*}
-\frac 12\frac{\partial f^\tau_{(k+1)\tau, s}}{\partial s} = J^\tau_{(k+1)\tau, s}[\partial_s \hat b^\tau_s] 
\end{equation*}
where the $n \times n$ matrix $J^\tau_{t,s}$ is defined as:
\begin{equation}
\label{eq:defJtau}
J_{t, s}^\tau \coloneqq \frac\varepsilon2\brac{\id - K^\tau_{t, s}K^\tau_{s,t}}\pinv H^\tau_{t, s}(H^\tau_s)^{-1}(\id+K^\tau_s)\mathrm{diag} (\hat b^\tau_s)^{-1},
\end{equation}
Here \(\mathrm{diag} (\hat b^\tau_s)\) is the diagonal matrix with diagonal coefficients given by \(\hat b^\tau_s\), and \(M\pinv\) denotes the Moore-Penrose pseudoinverse of a matrix $M$, which appears due to the fact that \(\id - K^\tau_{t, s}K^\tau_{s,t}\) is only invertible in \(\CX/\R\). For some \(t_\tau\) converging to \(s\) as \(\tau \to 0\), we have \(H^\tau_{t_\tau, s} \to H_{s,s} \coloneqq H_{\mu_s, \mu_s}\): indeed \(\hat \mu^\tau\) converges uniformly to \(\mu\) (by applying \(B^{-1}\), which is a continuous function between compact sets, to the convergence of \(\hat b^\tau\)) and the Schrödinger potentials vary continuously with respect to the measures \cite[Proposition 13 (Appendix B)]{feydy19}. The same reasoning gives \(K^\tau_{t_\tau, s}\to K_{s,s}\coloneqq K_{\mu_s, \mu_s}\), \(K^\tau_{s, t_\tau}\to K_{s,s}\), \(K_s^\tau \to K_s\coloneqq K_{\mu_s}\), \(H_s^\tau \to H_s\coloneqq H_{\mu_s}\), and we also directly get \(\mathrm{diag}(\hat b^\tau_s) \to \mathrm{diag}(b_s)\) by the convergence of \(\hat b^\tau\). With the continuity of the inverse and that of the pseudoinverse for sequences of constant rank in finite dimension (the constant rank assumption being a consequence of \cref{prop:K_munu})
we have that the curve defined by
\(
\widehat J_s^\tau \coloneqq J^\tau_{\brac{\floor{s/\tau} + 1}\tau, s}
\)
converges pointwise as \(\tau\to0\) to 
\begin{equation*}
J_s = \frac\varepsilon2\brac{\id - K_{s,s}K_{s,s}}\pinv H_{s,s}H_{s,s}\inv(\id+K_{s})\diag{b_s}^{-1} = \frac\varepsilon2\brac{\id - K_s}\pinv\diag{b_s}^{-1}
\end{equation*}
We call $q_t$ the resulting expression when applied to $\dot{b}_t$, that is, given \cref{prop:change_mudot_bdot}, 
\begin{equation*}
q_t \coloneqq J_t[\Dot b_t] = \frac{\varepsilon}{2} (\id - K_{\mu_t}^2)^{-1} H_{\mu_t} [\Dot \mu_t].
\end{equation*}
We claim that the curve $(\bar q^\tau_t)_t$ converges weakly in \(L^2([0, T]; \mcal H_c)\) to \( (q_t)_t\). Take a continuous curve \((\phi_t)_t \in \mcal C([0, T]; \mcal H_c)\)  and compute
\begin{equation*}
\int_0^T \lrangle{\phi_t, \sbar{q}_t^\tau}_{\mcal H_c}\dd t = \sum_{0\leq k \tau \leq T} \frac1\tau\int_{k\tau}^{(k+1)\tau}\int_{k\tau}^{(k+1)\tau}\langle \phi_t, \widehat J_s^\tau[\partial_s \hat{b}^\tau_s] \rangle_{\mcal H_c}\dd t\dd s =\int_0^T\langle \psi_s^\tau, \partial_s \hat{b}^\tau_s \rangle_{\mcal H_c}\dd s
\end{equation*} 
where
\begin{equation*}
\psi^\tau_s \coloneqq \frac1\tau\brac{\widehat J_s^\tau}^*\sqbrac{ \int_{k\tau}^{(k+1)\tau}\phi_t \dd t} \quad \text{for }  s\in [k\tau, (k+1)\tau)
\end{equation*}
and \(M^*\) is the transpose of a matrix \(M\). The sequence \((\psi^\tau)_\tau\) is uniformly bounded and converges pointwise to \(s\mapsto J_s^*[\phi_s]\) (by the convergence of \(s\mapsto \widehat J_s^\tau\) and the fundamental theorem of calculus). The Lebesgue dominated convergence theorem gives strong convergence of the sequence in \(L^2([0, T]; \mcal H_c)\) and thus the duality pairing with \(s\mapsto \partial_s \hat{b}_s^\tau\) converges. 
Therefore, we have convergence of \(\lrangle{\phi, \bar q^\tau}_{L^2([0, T]; \mcal H_c)}\) to \(\lrangle{\phi, q}_{L^2([0, T]; \mcal H_c)}\) for all continuous curves \(\phi\), and using the density of such curves in \(L^2([0, T]; \mcal H_c)\) with the boundedness of the sequence $\bar q^\tau$  gives the weak limit.

\medskip

\noindent \emph{Limit of $\bar p^\tau$.}
Regarding the piecewise constant interpolation of the pressure $(\bar p_t^\tau)_t$, that can be written \(\bar p_t^\tau = -V-\bar q_t^\tau\), from the paragraph above we have convergence of the curves \((\bar p^\tau_t)_t\) to \(p = (-V - q_t)_t\) weakly in \(L^2([0,T]; \mcal H_c)\). Thus, showing that \(p_t\leq 0\) and \(\lrangle{\mu_t, p_t} = 0\) for a.e. \(t\) will give the desired result. That $p_t \leq 0$ is easy: indeed here the non-positivity constraint corresponds to $p_t \in (\R_-)^\X$, and the set \(L^2([0,T];  (\R_-)^\X)\) is closed and convex. As closed convex sets are stable by weak convergence, we deduce that $p \in L^2([0,T]; (\R_-)^\X)$. Next, since $\langle \bar{\mu}^\tau_t, \bar p^\tau_t \rangle = 0$ for all $t$ and $\bar{\mu}^\tau$ converges strongly in $L^2([0,T]; \mcal \PX)$, we can pass to the limit and deduce $\langle \mu_t, p_t \rangle = 0$ for a.e. $t$. Thus $p_t \in P \mu_t$ for a.e. $t$, which is sufficient to conclude that the limit $(\mu_t)_t$ satisfies the equation~\eqref{eq:sinflow_mu}, hence is a Sinkhorn potential flow. 

\medskip

Ultimately, as the Sinkhorn potential flow is unique (\cref{thm:main}), the limit not only holds along a subsequence but for the whole sequence as $\tau \to 0$.
\end{proof}

\begin{remark}[Obstructions in the general case]\label{rem:obstructions}
Taking the limit of the optimality conditions in the general setting is much more difficult due to the lack of homogeneity between the tangent spaces. Indeed, we have proven \cref{prop:dfts/ds} when the derivative \((\Dot \mu_t)_t\) is always valued in \(\MX\) which is always the case when all spaces are simply \(\R^\X\). However, generally speaking, we would have \(\Dot \mu_t \in \tgtspace{\mu_t}\) when building the curve \((\mu_t)_t\) from the geodesic interpolation as we have done, and the operator \(H_{\mu, \nu}\) from \cref{def:kmunu} is at most extended to \(\mcal H_{\nu, \mu}^*\) which will not usually contain \(\tgtspace{\mu}\). The expression on the right hand side of \eqref{eq:dfts/ds} is thus ill-defined for the more general class of curves. This is also apparent in the definition of $J^\tau$ in~\eqref{eq:defJtau}: it features the term $H^\tau_{t, s}(H^\tau_s)^{-1}$ which at the limit $\tau \to 0$ (hence $t \to s$) behaves like the identity, but may be very much unbounded for $\tau > 0$ for non-finite spaces.

Still, the expected limit is known to be well defined from \cref{thm:main}, and moreover it is retrieved as the limit \(n\to +\infty\) of the discretized space case. The limit results that we have are summarized by the diagram in \cref{fig:limitdiagram} below. Proving that \cref{thm:tauto0} holds for a general compact space $\X$ is a challenging and open question. 
\end{remark}

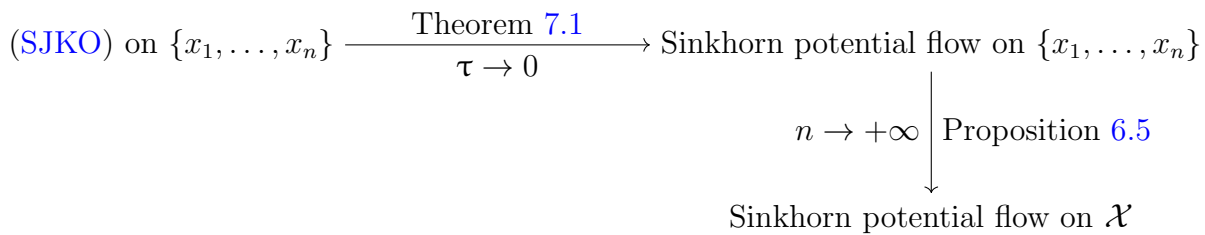
\begin{figure}[H]
\centering
\begin{tikzpicture}
\node (SJKOn) at (0, 0) {\eqref{eq:SJKO} on \(\cbrac{x_1, \dots, x_n}\)};
\node (SPFn) at (10, 0) {Sinkhorn potential flow on \(\cbrac{x_1, \dots, x_n}\)};
\node (SPF) at (10, -2.3) {Sinkhorn potential flow on \(\mcal X\)};
\draw[->] (SJKOn.east) -- (SPFn.west) node[midway, below]{\(\tau \to 0\)} node[midway, above]{\cref{thm:tauto0}};
\draw[->] (SPFn.south) -- (SPF.north) node[midway, left]{\(n\to + \infty\)} node[midway, right]{\cref{prop:flow_ntoflow}};
\end{tikzpicture}
\caption{Summary of limit results obtained the present work.}
\label{fig:limitdiagram}
\end{figure}

\section{Numerics}\label{sec:numerics}

In this section we provide numerical illustrations of the peculiar behavior of the Sinkhorn potential flow on simple examples. The full code, parameters used and animations are available online\footnote{\url{https://github.com/mhardion/sinkhorn_potential_flows}}. Some code was adapted from the Geomloss package \cite{feydy19}, and we use the library PyTorch \cite{torch} for its automatic differentiation capabilities. We first detail the algorithm that was used before moving to the results.

\subsection{Numerical scheme}
We explain how to compute a single \cref{eq:SJKO} step starting from \(\mu_k^\tau\in\PX\), i.e. minimize the objective
\begin{equation}\label{eq:SJKOfunctional}
\mu \mapsto 2\tau\lrangle{\mu, V} + S_\varepsilon(\mu, \mu_k^\tau)
\end{equation}
over \(\PX\), which has the same minimizer as in \eqref{eq:SJKO} but avoids dividing by a small \(\tau\) which could be numerically unstable. In the numerics we must work with discrete measures \(\mu = \sum_{i=1}^n a_i\delta_{x_i}\), where we have two main schemes: the Eulerian scheme, where the positions \((x_i)_i\) are fixed and the weights \((a_i)_i\) are the variables (which amounts to setting \(\X = \{x_1, \dots, x_n\}\)), and conversely the Lagrangian scheme, where the weights $(a_i)_i$ are set to \(1/n\) and the positions $(x_i)_i$ are free. To compute \(\OT_\varepsilon\) appearing in the Sinkhorn divergence, the basic method is the Sinkhorn algorithm given by iterating alternatively the two conditions in the Schrödinger system \eqref{eq:schrödingersystem}, which converges to the solution of the dual EOT problem \eqref{eq:dualOTeps} \cite[Theorem 4.2]{computationalot}. For the self transport of a measure, a more simple fixed-point algorithm can be utilized \cite[Section 3.1]{feydy19}. We use as initialization the values of the Schrödinger potentials at the previous step since they should be relatively close to the optimal ones. The gradients of the Sinkhorn divergence are easily deduced from the dual potentials computed by these algorithms \cite[Section 3.2]{feydy19}, which will allow us to perform each SJKO step through a first order method such as gradient descent. When considering a Eulerian discretization, the objective \eqref{eq:SJKOfunctional} is convex in the weights (see \cref{thm:feydythm1}) with Lipschitz gradient \cite{carlier2024displacement}, which classically yields convergence guarantees of such methods to the optimum if the gradients are computed accurately enough. The drawback is the need to enforce the simplex constraint $\sum_i a_i = 1$ and $a_i \geq 0$, which slows down convergence. When considering a Lagrangian discretization, the convexity is lost but the need to enforce constraints disappears. We summarize in \cref{alg:sjko} below the resulting algorithm.
\begin{algorithm}[h]

\SetKwInOut{Input}{Input}\SetKwInOut{Output}{Output}
\Input{measure \(\mu_k^\tau\), potential \(V\)}

Initialize \(\mu_{k+1}^\tau \leftarrow \mu_k^\tau\) \;
\Repeat{a convergence criterion is met}{
Compute \(f_{\mu_{k+1}^\tau, \mu_k^\tau}\) and \(f_{\mu_{k+1}^\tau}\) using Sinkhorn's algorithm \cite[Section 3.1]{feydy19}\;
\If{the discretization is Eulerian}{
  Compute the gradient of \eqref{eq:SJKOfunctional} w.r.t. the masses as \(2\tau V + f_{\mu_{k+1}^\tau, \mu_k^\tau} - f_{\mu_{k+1}^\tau}\)\;
  Perform a step of a gradient-based optimization algorithm, constrained to the probability simplex, to update the weights of \(\mu_{k+1}^\tau\)\;
}
\If{the discretization is Lagrangian}{
  Compute the gradient of \eqref{eq:SJKOfunctional} w.r.t. the positions using automatic differentiation \cite[Section 3.2]{feydy19}\;
  Perform a step of a gradient-based optimization algorithm to update the positions of the atoms of \(\mu_{k+1}^\tau\)\;
}
}
\Output{\(\mu_{k+1}^\tau\) approximating the minimizer of \eqref{eq:SJKOfunctional}}
\caption{Compute a single \cref{eq:SJKO} step}\label{alg:sjko}
\end{algorithm}
For a Eulerian discretization, the convergence criterion that we used is based on the optimality conditions from \cref{lm:optimality_SJKO} (note that they can be shown to be sufficient since the objective is convex).
Observe that the constant \(C_k^\tau\) in \eqref{eq:optimality_SJKO} can be computed by integrating against \(\mu_{k+1}^\tau\), giving \(C_k^\tau = \langle \mu_{k+1}^\tau, g_k^\tau\rangle\) where \(g_k^\tau \coloneqq V + (f_{\mu_{k+1}^\tau, \mu_k^\tau} - f_{\mu_{k+1}^\tau})/(2\tau)\). Thus, rewriting \eqref{eq:optimality_SJKO} as \(-p_k^\tau = g_k^\tau - \langle \mu_{k+1}^\tau, g_k^\tau\rangle\), the optimality conditions may be written \(g_k^\tau - \langle \mu_{k+1}^\tau, g_k^\tau\rangle \geq 0\). We can thus stop iterating when the inequality is valid up to a small tolerance.

For a Lagrangian discretization, the optimal criterion used must be different since one cannot expect to get to the global minimum of the objective functional \eqref{eq:SJKOfunctional} with a first order algorithm in general. Indeed when \(V\) is not convex, the functional is not convex in the positions \((x_i)_i\). Instead of the global optimality criterion that we have seen above, we may simply stop the algorithm when the gradient of the objective with respect to the positions is smaller than a tolerance or after a maximum number of iterations.

For the gradient based algorithm, we settled on accelerated projected gradient descent (a special case of FISTA \cite{beck2009fast}) in the Eulerian case, and on Nesterov accelerated gradient descent \cite{nesterov1983method} in the Lagrangian case. It should be noted that for the former, convergence is quite slow in our experiments, most likely due to the high dimensionality of the problem when taking a decently fine discretization of the space. The development of more efficient solvers of \cref{eq:SJKO} is left for future works.

In our experiments below we always take $\X \subset \R^d$ and $c$ to be the squared Euclidean distance.

\subsection{Visualizing the constrained rotation}\label{sec:numerics:sphere}

When the base space is made of three points i.e. \(\X=\{x_1, x_2, x_3\}\), the RKHS is 3-dimensional and it is thus possible to visualize the unit sphere and the embeddings \(b_k^\tau = \exp(-f_{\mu_k^\tau}/\varepsilon)\) of the scheme. The operator \(H_c\) is a \(3\times 3\) matrix, which is symmetric and positive definite since \(k_c\) is symmetric and strictly positive definite. The change of basis provided by \(H_c^{-\frac12}\) is an isometry between \((\mcal H_c, \langle \cdot, \cdot \rangle_{\mcal H_c})\) and \((\R^3, \langle\cdot,\cdot\rangle)\), we can thus use it to embed the RKHS sphere into the the Euclidean sphere to better visualize it. The axis of the rotation corresponding to the operator \(W\) can also be computed easily to compare the simulation with the theoretical evolution. The results displayed in \cref{fig:sphere} above in Page~\pageref{fig:sphere} qualitatively show good correspondance with the theory.

\subsection{Convex potential: collapsing behavior}

Consider a measure that has mass distributed within a ball of small diameter compared to \(\sqrt{\varepsilon}\). Intuitively, this is the regime where \(S_\varepsilon\) behaves similarly to a Maximum Mean Discrepancy \cite[Theorem 1]{genevay18a}, which is the square Euclidean distance between the barycenters of the input measures (with our choice of \(c\)), and the metric tensor behaves like the squared norm of the speed of the barycenter \cite[Theorem 4.17]{RGSD}. The cost of collapsing the mass to its barycenter is therefore small, whereas it makes the potential energy decrease by convexity of the potential. Therefore, one can expect the Sinkhorn potential flow to collapse mass to its barycenter when it accumulates in a ball of diameter comparable to \(\sqrt{\varepsilon}\). This behavior is observed in the numerics in \cref{fig:mass_collapse} (in 1D) and in \cref{fig:particle_collapse} (in 2D) below.

\begin{figure}[ht]
\centering
\begin{tabular}{@{\hspace{-.5cm}}c@{\hspace{.02cm}}cccc}

\begin{tikzpicture}
\draw (0, 0) node[rotate=90]{\footnotesize\(\varepsilon=0.04\) (Eulerian)};
\end{tikzpicture} &\includegraphics[width=.2\textwidth]{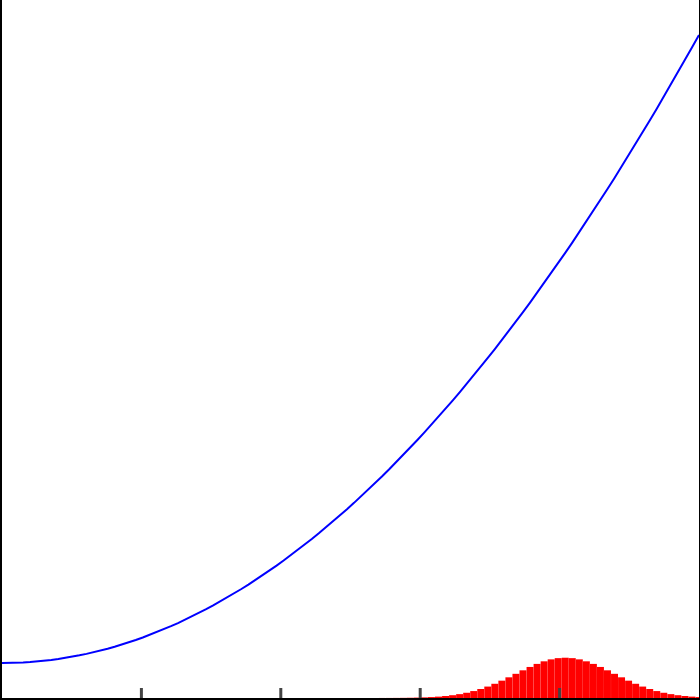} & \includegraphics[width=.2\textwidth]{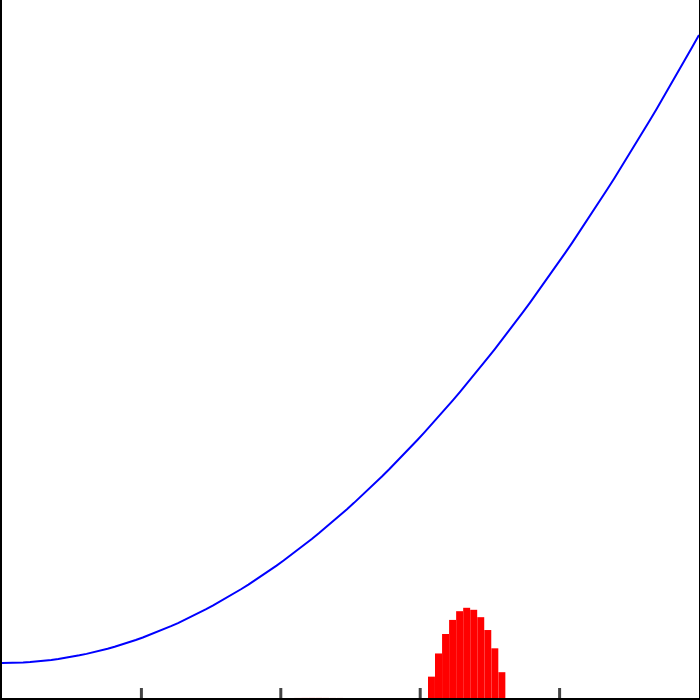} & \includegraphics[width=.2\textwidth]{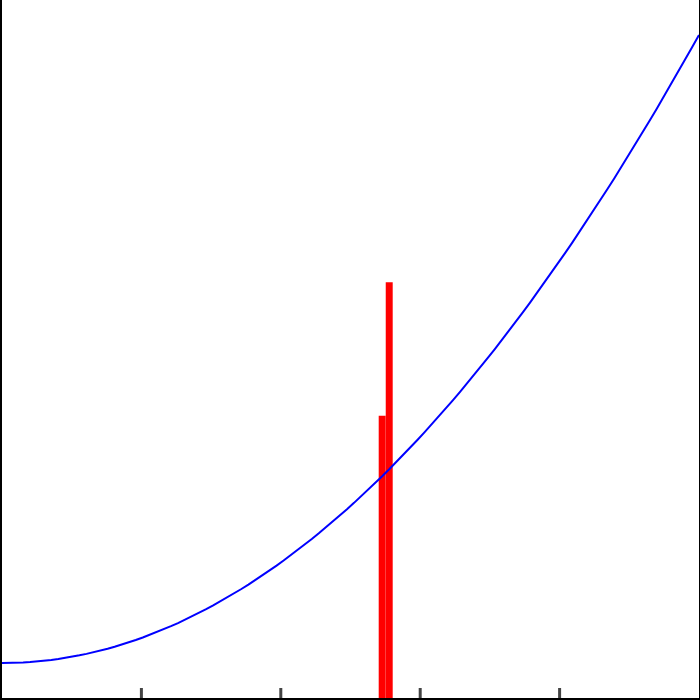} & \includegraphics[width=.2\textwidth]{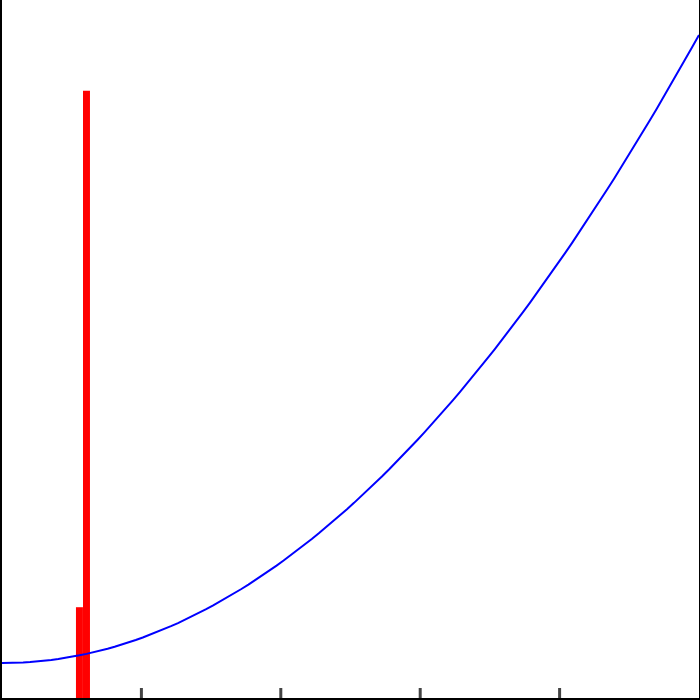}\\
\raisebox{-.1cm}{
\begin{tikzpicture}
\draw (0, 0) node[rotate=90]{\footnotesize\(\varepsilon=0.04\) (Lagrangian)};
\end{tikzpicture}}&\includegraphics[width=.2\textwidth]{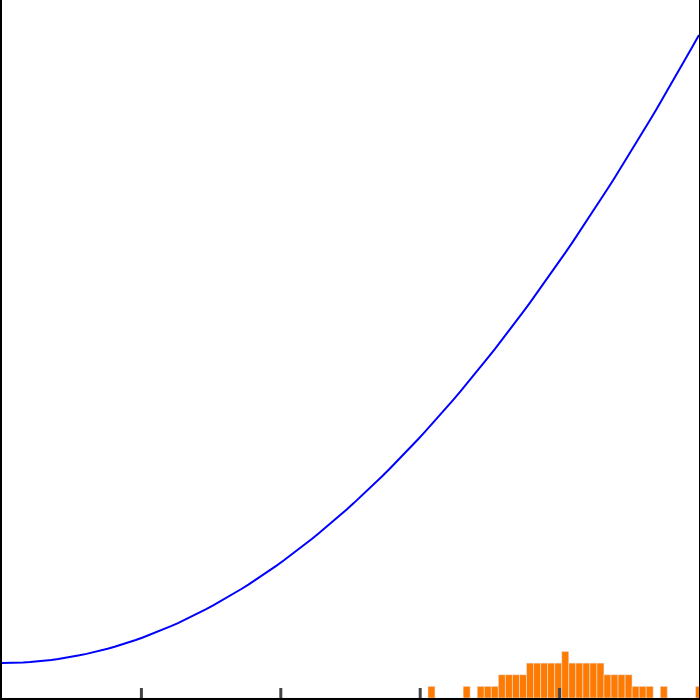} &  \includegraphics[width=.2\textwidth]{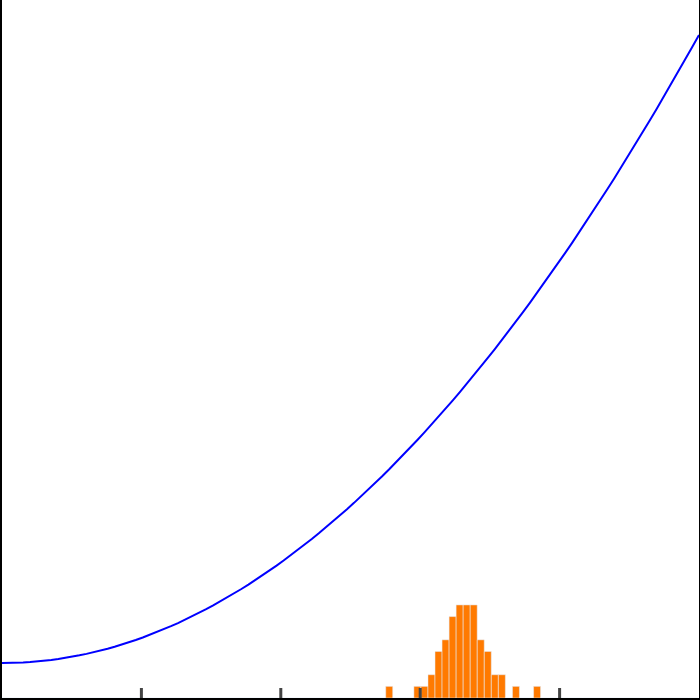} & \includegraphics[width=.2\textwidth]{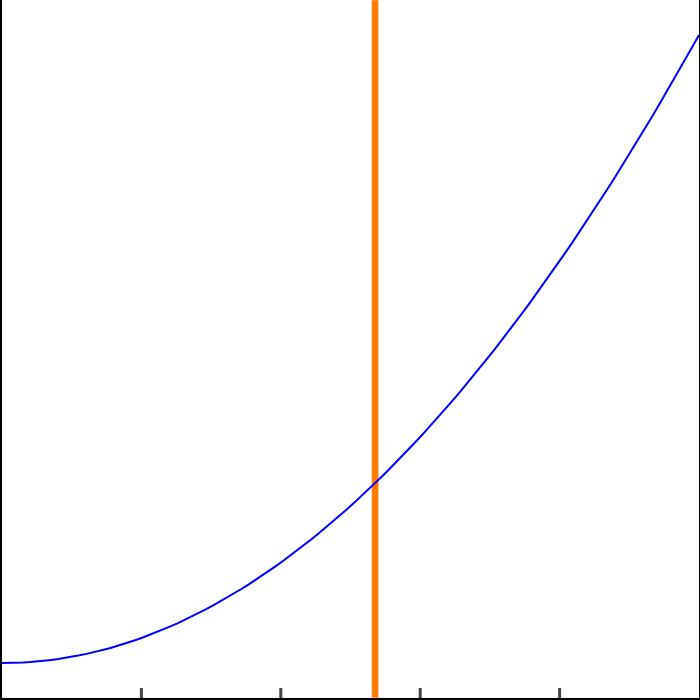} & \includegraphics[width=.2\textwidth]{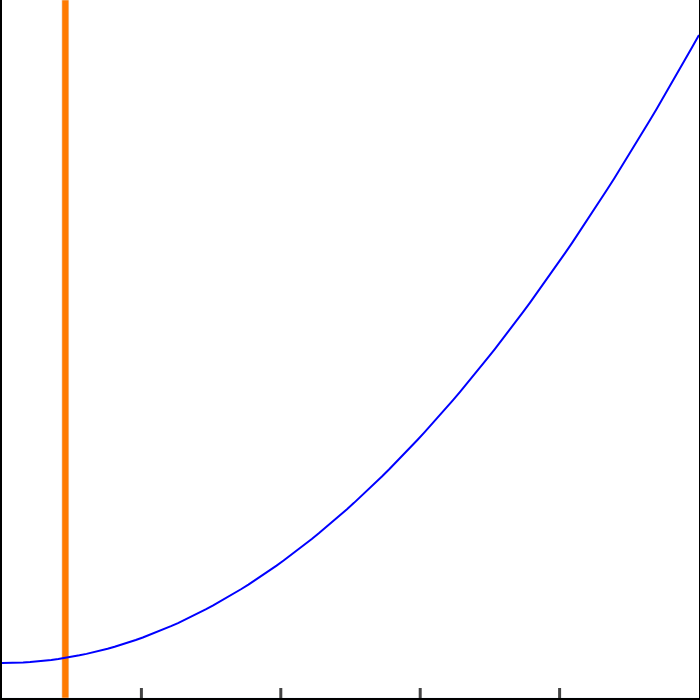}\\

\begin{tikzpicture}
\draw (0, 0) node[rotate=90]{\footnotesize\(\varepsilon=0\) (Wasserstein)};
\end{tikzpicture}&\includegraphics[width=.2\textwidth]{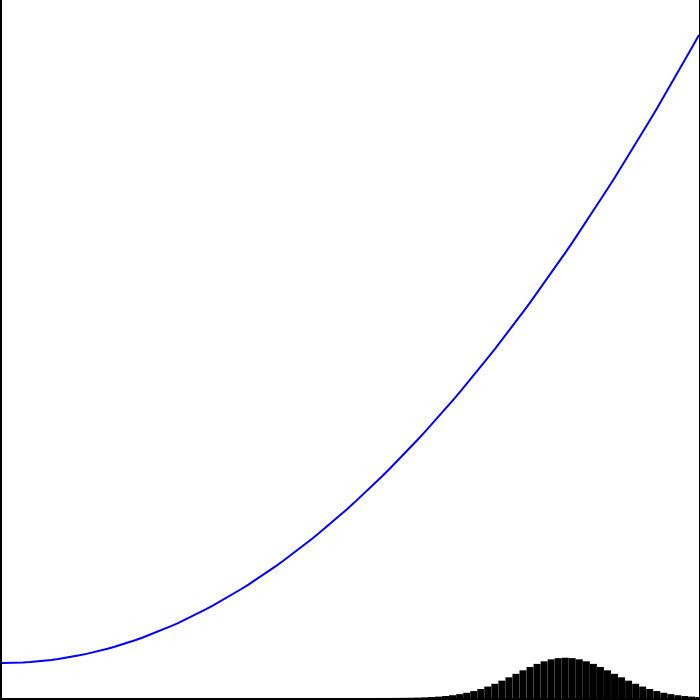} &  \includegraphics[width=.2\textwidth]{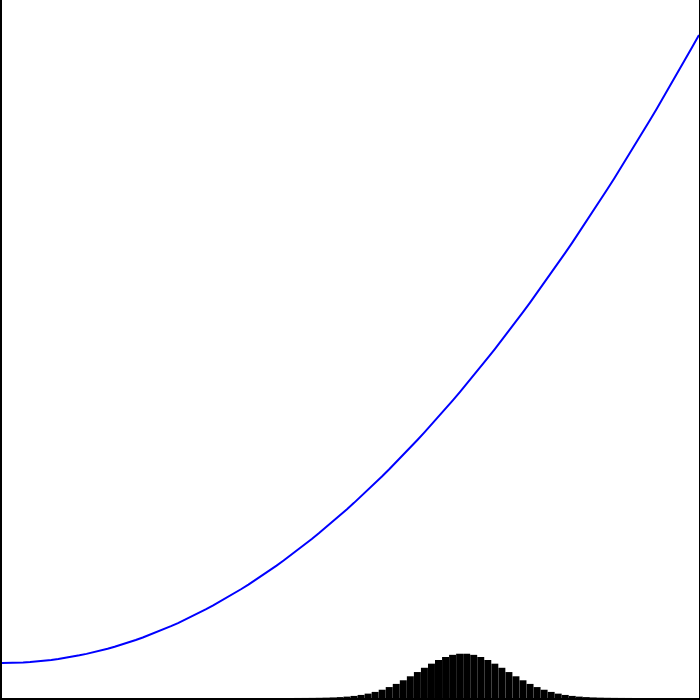} & \includegraphics[width=.2\textwidth]{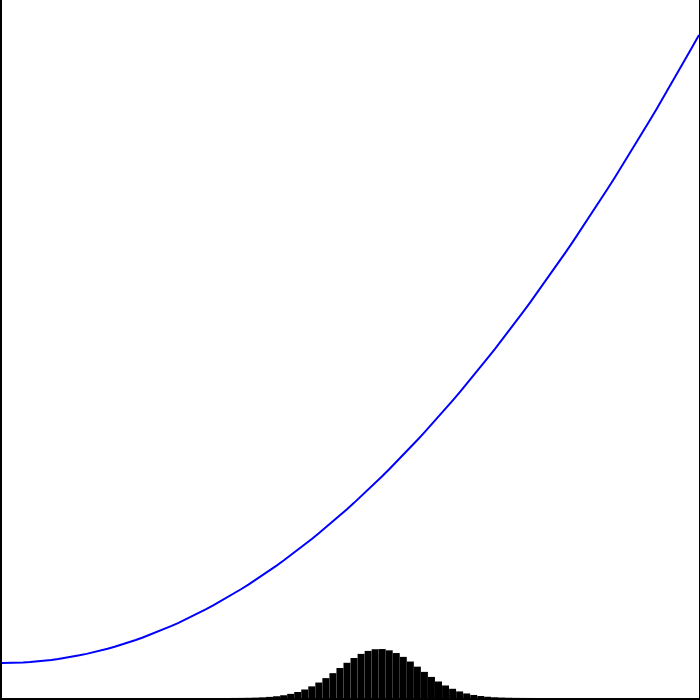} & \includegraphics[width=.2\textwidth]{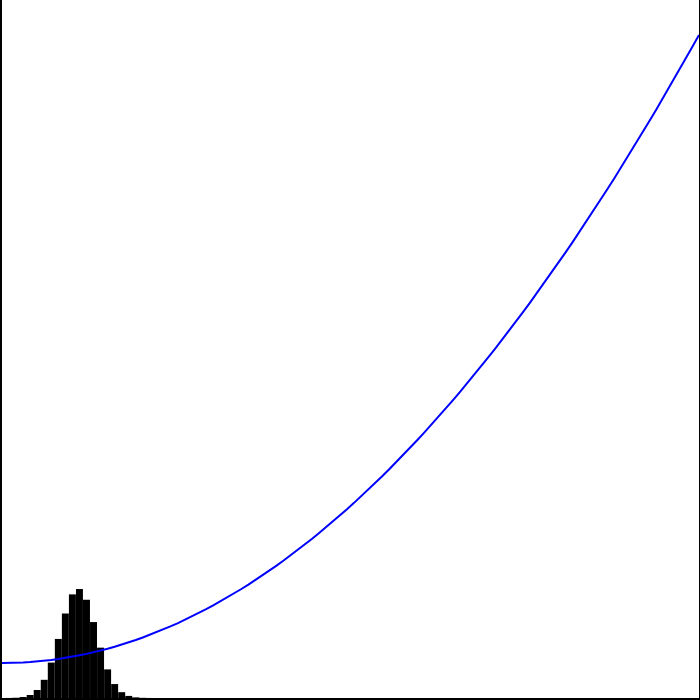}\\
&\(t=0.00\) & \(t=0.1\) & \(t=0.2\) & \(t=1\)
\end{tabular}
\caption{Sinkhorn potential flows (red for the Eulerian discretization at the top, orange for the Lagrangian one in the middle) for \(V(x) = x^2\) (blue plot) on \(\X = [0, 1]\) compared to the Wasserstein flow (black, at the bottom), all with the same Gaussian initial condition. For the Sinkhorn flows, the ticks on the x-axis are spaced by \(\sqrt{\varepsilon}\) to give a sense of the characteristic distance. In the Lagrangian discretization, we represent the empirical histogram of the particles. The Wasserstein flow is computed in closed form, then discretized to match the other figures.}
\label{fig:mass_collapse}
\end{figure}

\begin{figure}[h]
\centering
\begin{tabular}{@{\hspace{-.4cm}}c@{\hspace{.02cm}}cccc}
\raisebox{.7cm}{
\begin{tikzpicture}
\draw (0, 0) node[rotate=90]{\(\varepsilon=0.04\)};
\end{tikzpicture}}&\includegraphics[width=.22\textwidth]{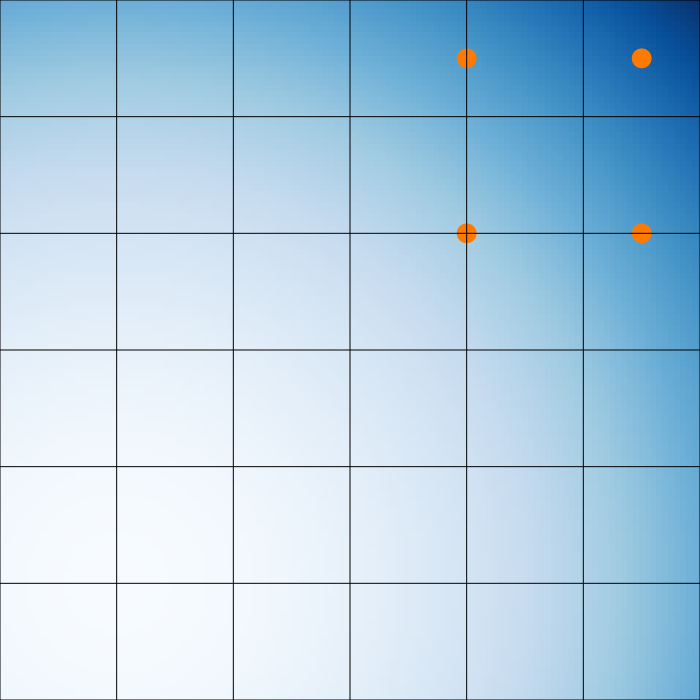} &  \includegraphics[width=.22\textwidth]{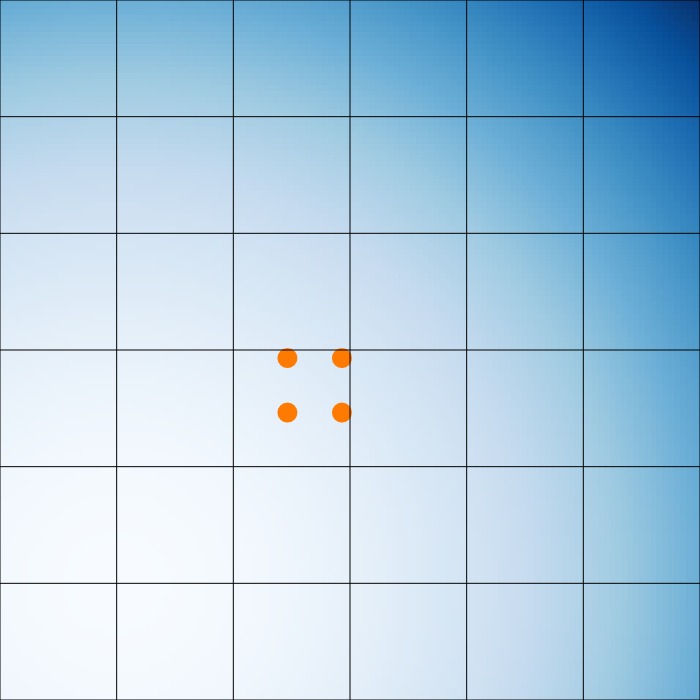} & \includegraphics[width=.22\textwidth]{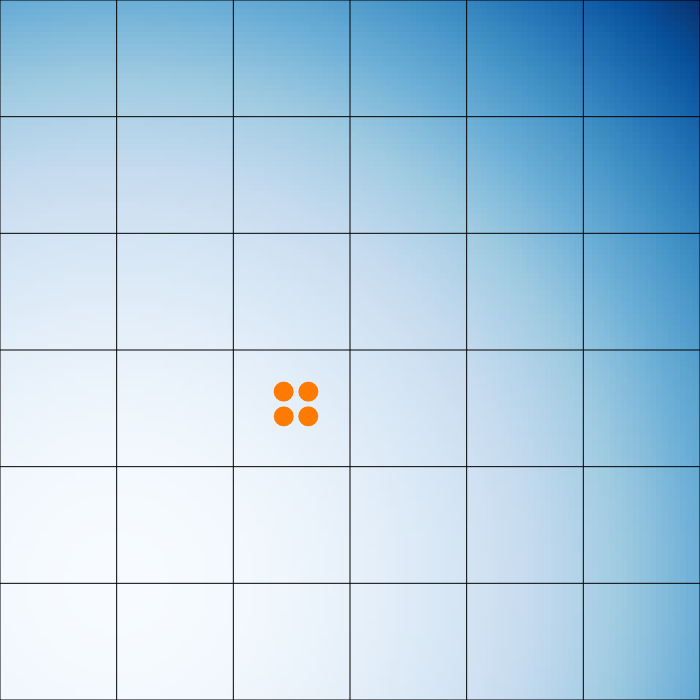} & \includegraphics[width=.22\textwidth]{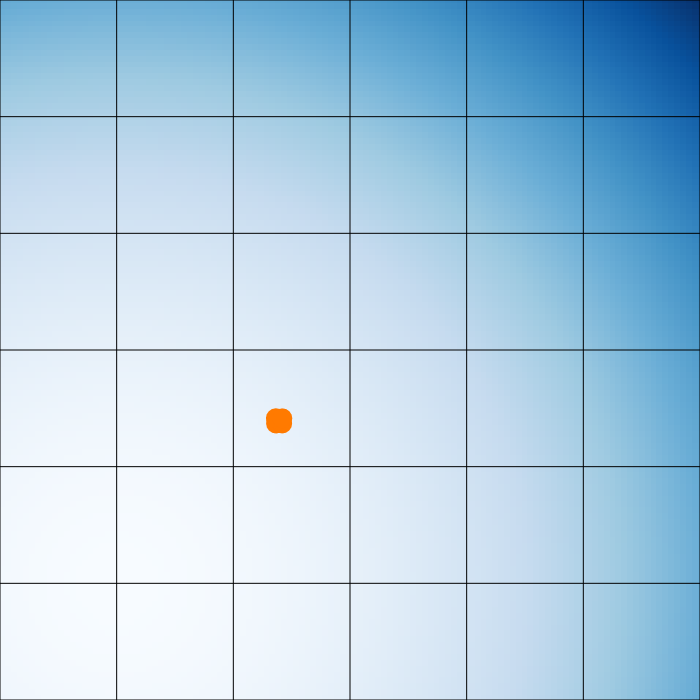}\\
\raisebox{.8cm}{
\begin{tikzpicture}
\draw (0, 0) node[rotate=90]{\(\varepsilon=0\)};
\end{tikzpicture}} &\includegraphics[width=.22\textwidth]{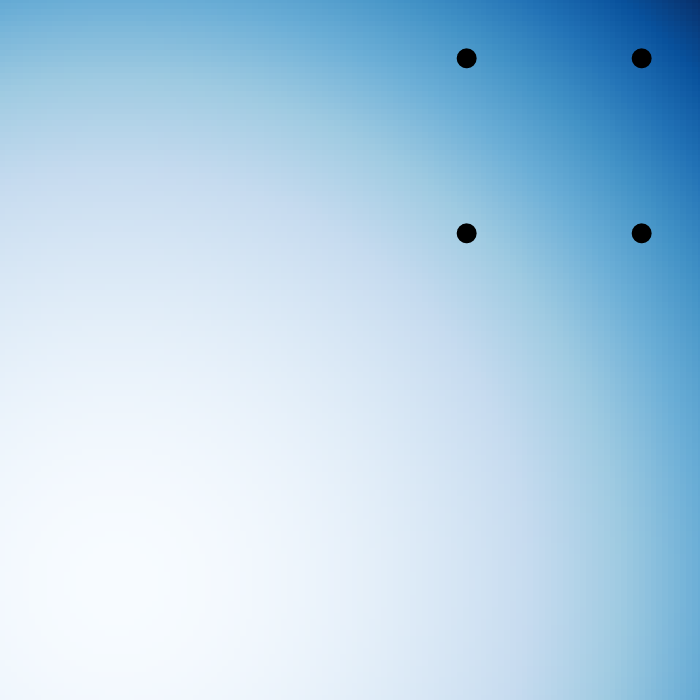} & \includegraphics[width=.22\textwidth]{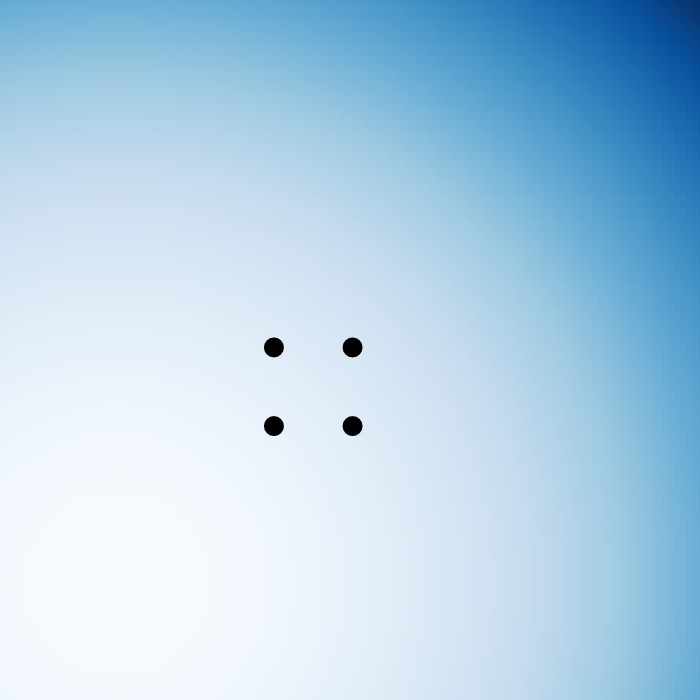} & \includegraphics[width=.22\textwidth]{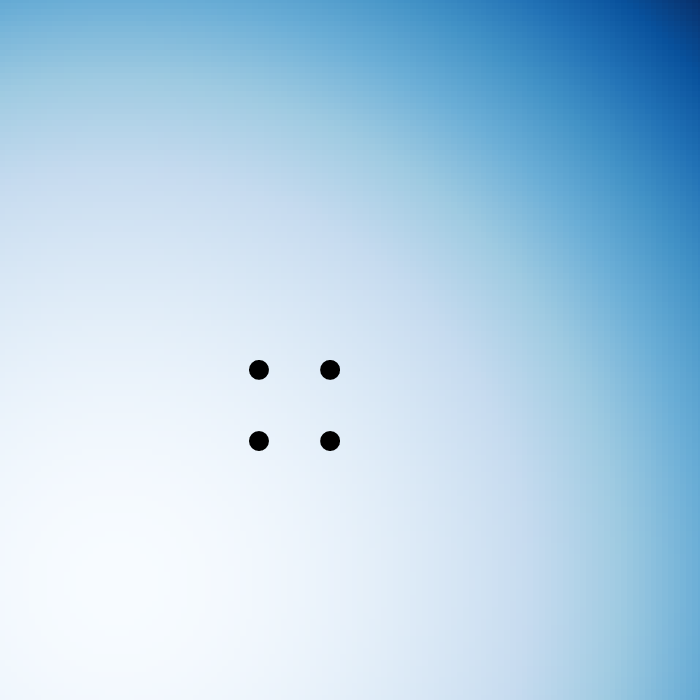} & \includegraphics[width=.22\textwidth]{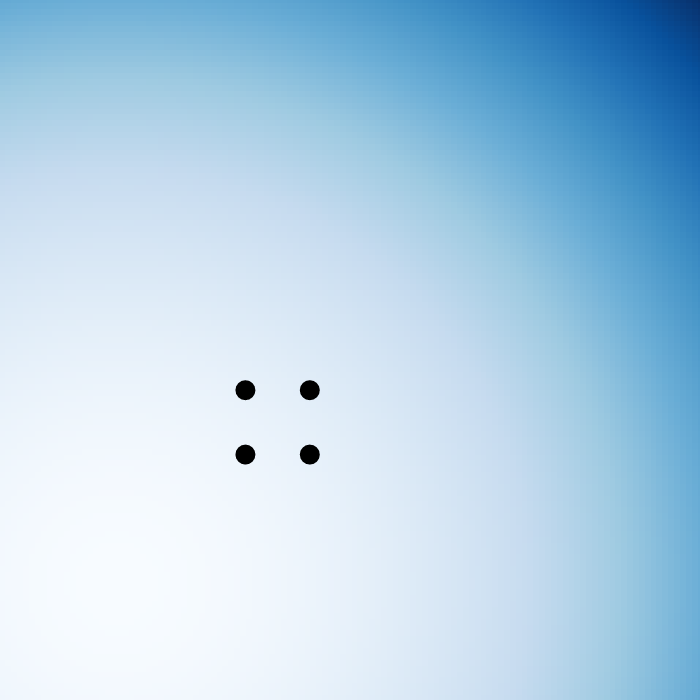}\\
&\(t=0.00\) & \(t=0.40\) & \(t=0.45\) & \(t=0.50\)
\end{tabular}
\caption{Lagrangian flows in 2D space for \(V = \norm{\cdot}^2\) (white to blue heat map), for the Sinkhorn divergence with \(\varepsilon =0.04\) (top) vs. the Wasserstein distance (bottom). For the latter, the grid lines are spaced by \(\sqrt{\varepsilon}\) to give a sense of the characteristic distance.}
\label{fig:particle_collapse}
\end{figure}

\subsection{Non-convex potential: vertical perturbations}

In a Eulerian discretization scheme, vertical perturbations are possible (and even the only possibility), while they are not in the Lagrangian case since the weights are fixed. As a result, teleportation through potential barriers and convergence towards the global minimum is observed for the former, while the particles get stuck in local minima for the latter (even with large \(\varepsilon\)). Both the Eulerian and the Lagrangian cases are displayed in \cref{fig:eul_lag_ncvx}. 
For the former, we also observe the behavior described intuitively as follows. The flow first behaves as in the convex case, until it reaches the potential barrier. Then a vertical perturbation appears, where mass ``tunnels'' a distance comparable to \(\sqrt{\varepsilon}\) through the potential barrier. This mass then moves horizontally towards the minimum, leaving some behind (this is where there is a split of mass, at \(t=1.25\) in \cref{fig:eul_lag_ncvx}).
The transferred mass then starts to accumulates towards the global minimum, and there remain three focal points of mass : near the global minimum, near the first apparition of a vertical perturbation, and at the local minimum. Our explanation for this is that it is costlier to make mass appear outside the support of the current measure, than to transfer mass within its support, as illustrated in \cref{fig:vertical_perturbation_cost} for the 2-point space. Therefore, the mass in the middle acts as a ``bridge'' between the local and global minimum.

\begin{remark}
In this example we see that the Lagrangian discretization does not adequately approximate solutions of~\eqref{eq:SJKO}, or of the Sinkhorn potential flow. This is because the flow teleports mass, something that a Lagrangian discretization cannot capture. Only the Eulerian discretization faithfully approximates the flow, indicating that its computation in high dimension is a challenging problem.
\end{remark}

\begin{figure}[h]
\centering
\begin{tabular}{cccc}
\includegraphics[width=.22\textwidth]{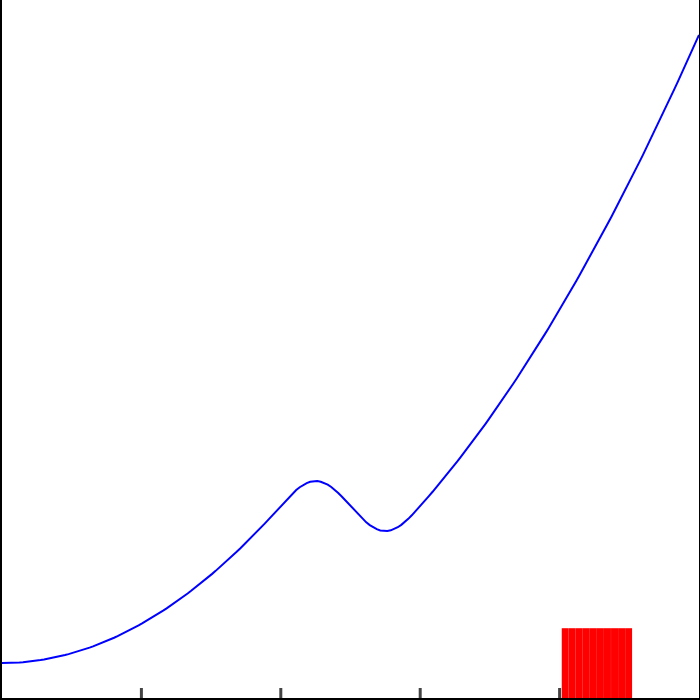} & \includegraphics[width=.22\textwidth]{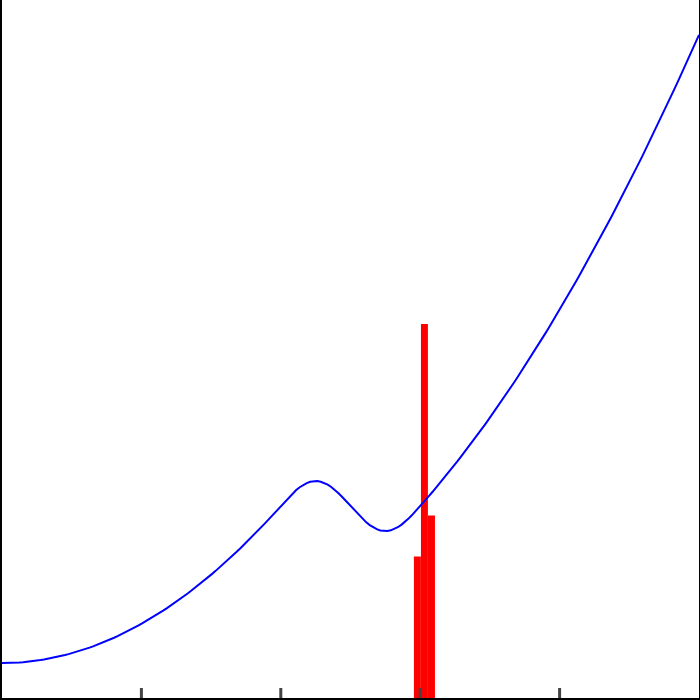} & \includegraphics[width=.22\textwidth]{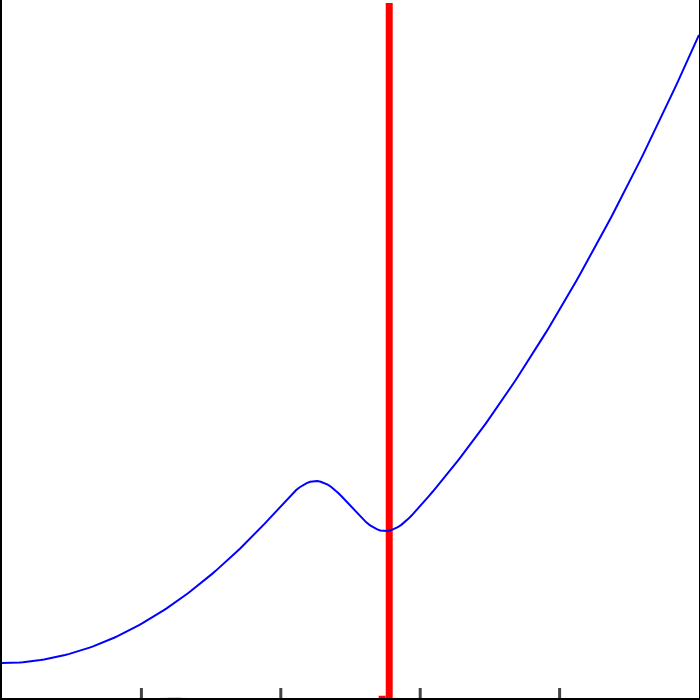} &
\includegraphics[width=.22\textwidth]{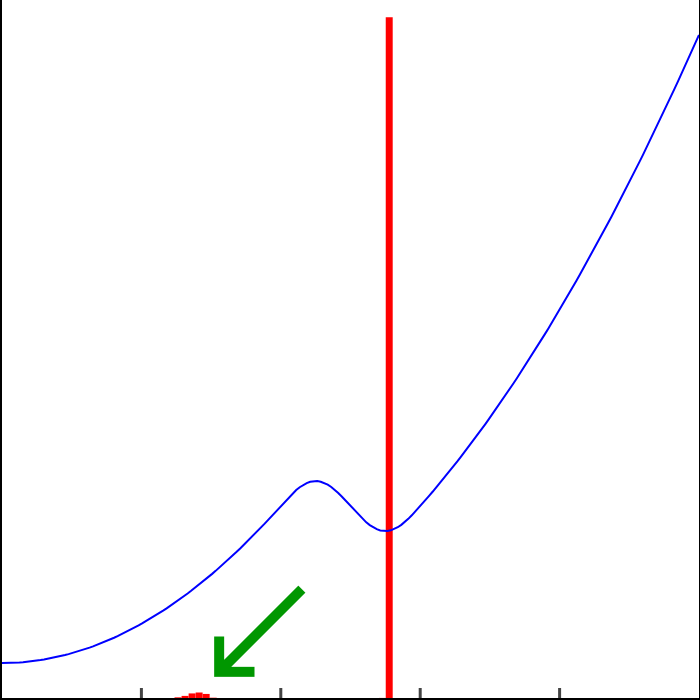}\\
\(t=0.00\) & \(t=0.25\) & \(t=0.40\) & \(t=0.60\)\\
\includegraphics[width=.22\textwidth]{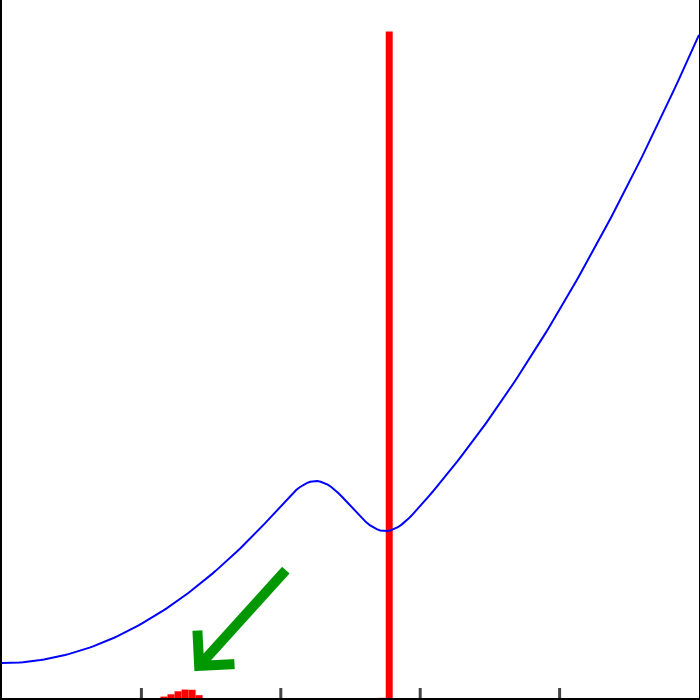} & \includegraphics[width=.22\textwidth]{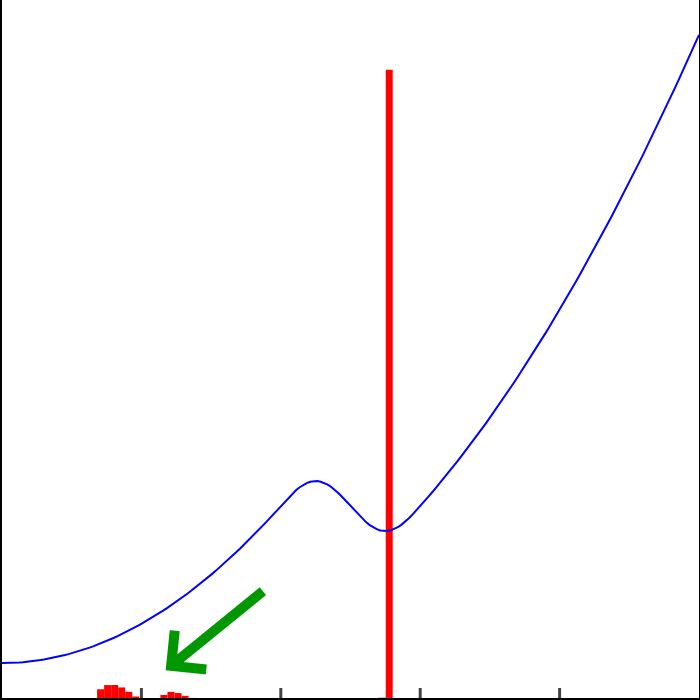} & \includegraphics[width=.22\textwidth]{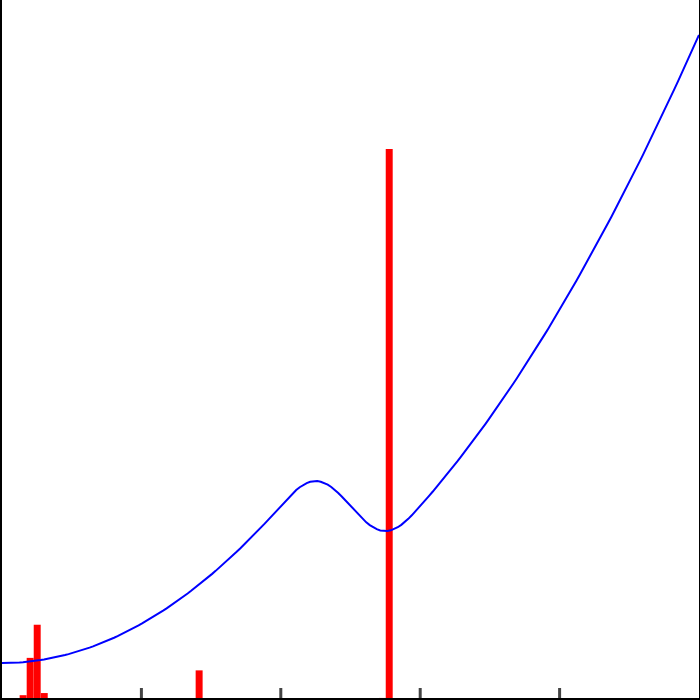}&
\includegraphics[width=.22\textwidth]{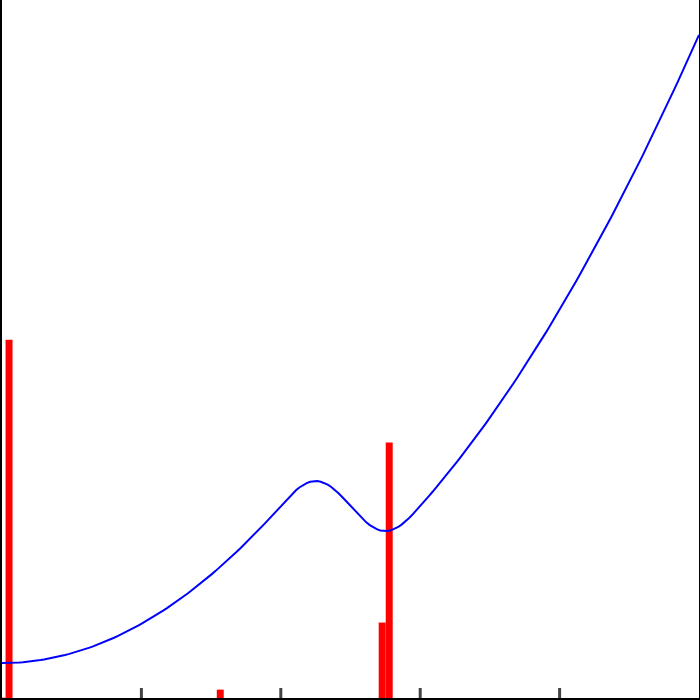}\\
\(t=0.75\) & \(t=1.25\) & \(t=2.50\) & \(t=7.35\)\\[.2cm]
\hline\\
\includegraphics[width=.22\textwidth]{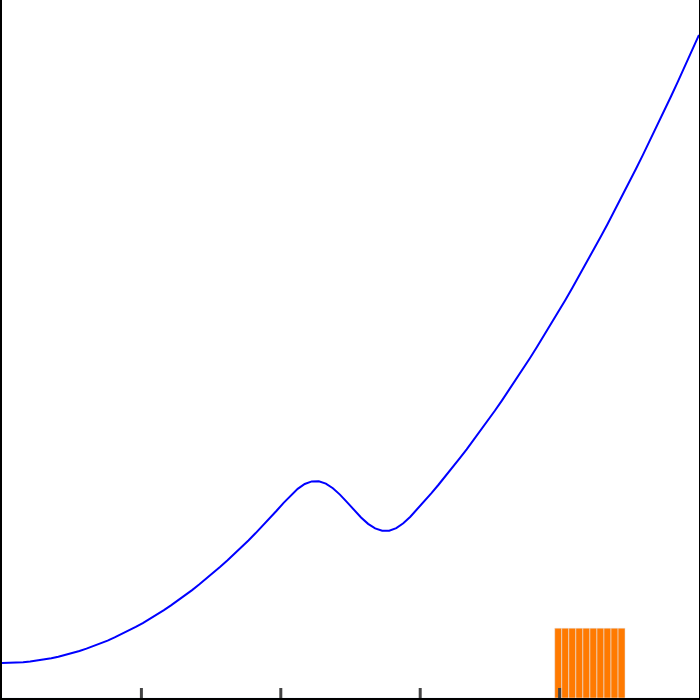} & \includegraphics[width=.22\textwidth]{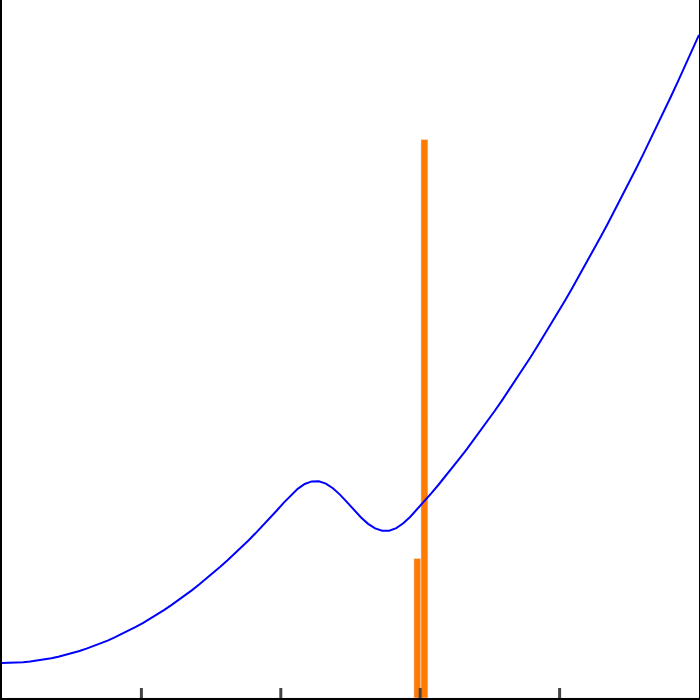} & \includegraphics[width=.22\textwidth]{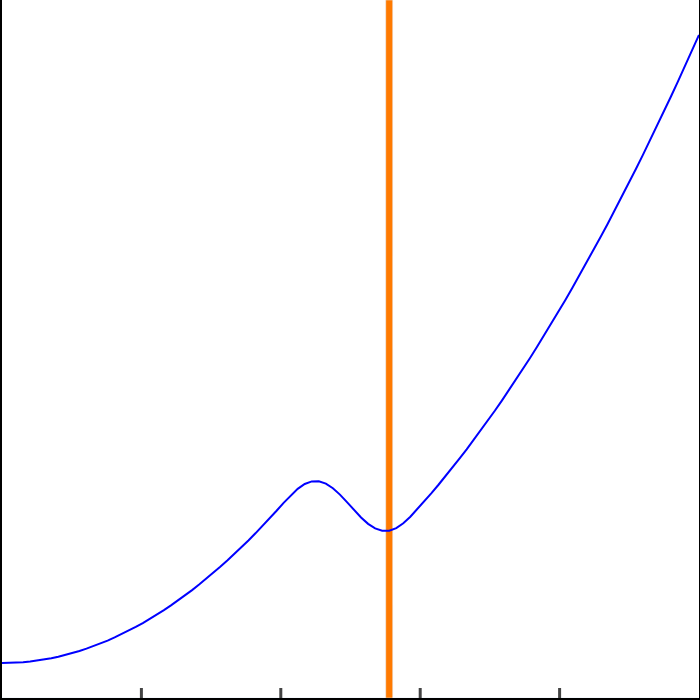} &
\includegraphics[width=.22\textwidth]{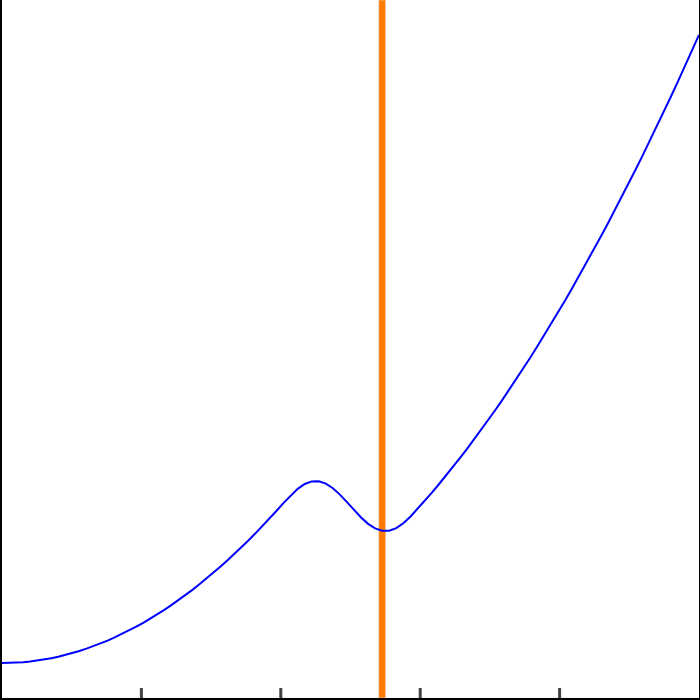}\\
\(t=0.00\) & \(t=0.25\) & \(t=1.25\) & \(t=7.35\)\\
\end{tabular}
\caption{Eulerian (red, top 2 rows) vs. Lagrangian (orange, bottom row) Sinkhorn potential flow for a non-convex potential (blue plot) with \(\varepsilon = 0.04\).}
\label{fig:eul_lag_ncvx}
\end{figure}

\begin{figure}
\centering
\includegraphics[width=.5\textwidth]{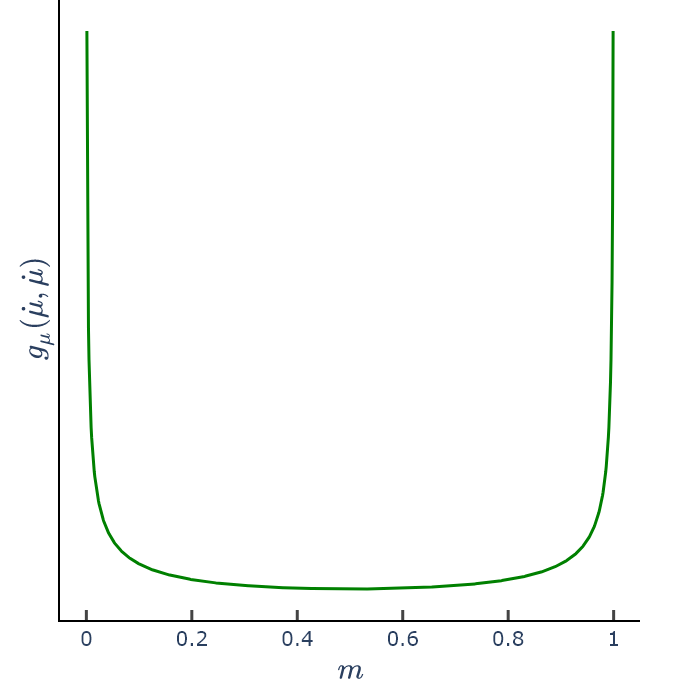}
\caption{Cost for the Sinhkorn metric tensor of a vertical perturbation \(\Dot \mu = (\delta_{x_2}-\delta_{x_1})\) of \(\mu = m\delta_{x_1} + (1-m)\delta_{x_2}\) as function of the mass \(m\). This is computed using the formula from \cite[Proposition 6.3]{RGSD}.}
\label{fig:vertical_perturbation_cost}
\end{figure}

\section{Conclusion and outlook}

In this work we started with a simple observation: if one wants to introduce entropic optimal transport in the scheme~\eqref{eq:JKO} without having to take the limit $\varepsilon \to 0$, then it is better to do it with the Sinkhorn divergence $S_\varepsilon$ rather than with $\mathrm{OT}_\varepsilon$. It gave us the definition of the scheme~\eqref{eq:SJKO}. 

At this point we followed where the mathematics lead us: understanding the formal limit of~\eqref{eq:SJKO} when $\tau \to 0$, i.e. the Sinkhorn potential flow~\eqref{eq:sinflow_mu}, required us to study the fine properties of this equation which is \emph{a priori} very unstable. We discovered in the end a very surprising structure ``linear skew-symmetric $+$ subdifferential of a convex function'' in the flat geometry of a Hilbert space, and a very intriguing behavior: the flow sometimes behaves like the Wasserstein one (\cref{prop:flow_dirac}), but sometimes teleports mass to escape local minima. This was observed in our numerical experiments, which show that only a Eulerian discretization, solving iteratively~\eqref{eq:SJKO}, can approximate the flow. 

% Moreover, as the motivation of keeping $\varepsilon$ fixed was computational, we showed that we can implement a simple solver for~\eqref{eq:SJKO}. 

\bigskip

Several questions are left open, beyond the generalization of \cref{thm:tauto0} to non-finite spaces $\X$. We mention them here, but we think they would require their own dedicated work.

\emph{Beyond potential energies.}
We only looked at $E(\mu) = \int V \, \dd \mu$ a potential energy. While the derivation of the formal equation of the flow in more general cases is not difficult (\cref{rm:other_E}), proving the well-posedness appears to be much more delicate: the structure ``linear skew-symmetric $+$ subdifferential of a convex function'' of the evolution~\eqref{eq:b_flat} in the $b$ variable looks very specific to potential energies. Any insight on the existence or long-term behavior of the flow in case of $E$ being the entropy or an internal energy is still missing. 

\emph{Limit $\varepsilon \to 0$.}
All our analysis is done for a fixed $\varepsilon$. In the limit $\varepsilon \to 0$, as $S_\varepsilon \to \mathrm{OT}_0$ we expect the flow to converge to the classical Wasserstein gradient flow, at least if $c$ is the quadratic cost. However proving rigorously such a result seems delicate: already the convergence $\varepsilon \to 0$ of the metric tensor $\mathbf{g}_\mu$ to the one of optimal transport is not rigorously done in~\cite[Section 4.4]{RGSD} because of technical difficulties.  

\emph{Applications.} 
Wasserstein gradient flows have numerous applications in pure and applied mathematics as recalled in the introduction. We expect our flow to be useful in some tasks of machine learning such as optimization or sampling: the Lagrangian discretization of the flow would lead to new interacting particle methods. 
% We leave the investigation and testing of these new methods to future works.  

\section*{Acknowledgments}

The research leading to this work started during a research stay of MH at the Department of Decision Sciences and BIDSA research center of Bocconi University, whose hospitality is warmly acknowledged. MH additionally acknowledges support by the French National Research Agency (ANR) under grant ANR-24-CE23-7711 ``Theory and Applications of the Geometry of Entropy Regularized Optimal Transport (TheATRE)'' during the final stages of this work. HL also acknowledges the support of the MUR-Prin 2022-202244A7YL ``Gradient Flows and Non-Smooth Geometric Structures with Applications to Optimization and Machine Learning'', funded by the European Union - Next Generation EU.

\appendix
\addtocontents{toc}{\vspace{\baselineskip}} 
\renewcommand{\thesection}{\Alph{section}}
\section{Background on Reproducing Kernel Hilbert Spaces}\label[appendix]{appendix:RKHS}

A function \(k\in \mcal C(\X\times \X)\) is a positive definite (PD) kernel if for all \(n \geq 1\), \((x_i)_{i=1}^n \subseteq \mcal X\), \((\lambda_i)_{i=1}^n\subset \R\), we have \(\sum_{i,j}\lambda_i\lambda_jk(x_i,x_j)\geq 0\). It is known \cite[Theorem 3]{berlinet2011rkhs} that for every such kernel there exists a unique RKHS \((\mcal H_k, \lrangle{\cdot, \cdot}_{\mcal H_k})\), i.e. a Hilbert space of continuous functions containing \(k(x, \cdot)\) for any \(x\in \X\) and verifying the reproducing kernel property \(\forall f\in \mcal H_k, \forall x\in \X, \lrangle{k(x, \cdot), f}_{\mcal H_k} = f(x)\). It is constructed as the completion of the pre-Hilbert space \(\vspan\{k(x, \cdot) \ : \ x\in\X\}\) with the inner product
\begin{equation*}
\lrangle{\sum_{i=1}^m\lambda_i k(x_i,\cdot), \sum_{j=1}^n\xi_jk(y_j,\cdot)}_{\mcal H_k} \coloneqq \sum_{i=1}^m\sum_{j=1}^n\lambda_i\xi_jk(x_i, y_j)
\end{equation*}
for \(m, n \geq 1\), \((\lambda_i)_{i=1}^m, (\xi_j)_{j=1}^n\subset \R\), \((x_i)_{i=1}^m, (y_j)_{j=1}^n \subseteq \mcal X\). 

The kernel \(k\) is further said to be universal if \(\mcal H_k\) is dense in \((\CX, \norm{\cdot}_{\infty})\), which is assumed in the sequel.

\begin{lemma}[{{\cite[Lemma B.2]{RGSD}}}]\label{lemma:infnorm<Hcnorm}
Weak convergence in \(\mcal H_k\) implies uniform convergence in $\CX$.
\end{lemma}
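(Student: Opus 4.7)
The plan is to combine the reproducing kernel property with the Arzelà-Ascoli theorem. Let $(\phi_n)_n$ be a sequence in $\mcal H_k$ converging weakly to some $\phi \in \mcal H_k$.

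First, I would establish pointwise convergence. By the reproducing property, for each $x \in \X$ we have $\phi_n(x) - \phi(x) = \langle \phi_n - \phi, k(x,\cdot)\rangle_{\mcal H_k}$, and weak convergence yields $\phi_n(x) \to \phi(x)$. Next, I would use the Banach-Steinhaus theorem to extract $M \coloneqq \sup_n \|\phi_n\|_{\mcal H_k} < + \infty$, which is automatic for weakly convergent sequences in a Hilbert space.

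The second step is equicontinuity. Using again the reproducing property,
\begin{equation*}
|\phi_n(x) - \phi_n(y)| = |\langle \phi_n, k(x,\cdot) - k(y,\cdot)\rangle_{\mcal H_k}| \leq M \sqrt{k(x,x) - 2 k(x,y) + k(y,y)}.
\end{equation*}
Since $\X \times \X$ is compact and $k$ continuous, $k$ is uniformly continuous; the right hand side therefore tends to $0$ uniformly in $n$ as $\mathsf{d}(x,y) \to 0$. This yields equicontinuity of the family $\{\phi_n\}_n$ in $\CX$.

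Finally, Arzelà-Ascoli applied to the equicontinuous and pointwise bounded family $\{\phi_n\}_n$ on the compact space $\X$ provides relative compactness in $(\CX, \|\cdot\|_\infty)$. Combined with the already-established pointwise convergence, every uniformly convergent subsequence must have limit $\phi$, so the full sequence converges uniformly to $\phi$. There is no real obstacle here: the proof is essentially a one-line consequence of the reproducing property plus Arzelà-Ascoli, and the only point to be careful about is invoking Banach-Steinhaus to get a uniform bound on the $\mcal H_k$-norms before estimating the modulus of continuity.
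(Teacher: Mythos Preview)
Your proof is correct and complete: the reproducing property gives pointwise convergence, the uniform norm bound plus the estimate $\|k(x,\cdot)-k(y,\cdot)\|_{\mcal H_k}^2 = k(x,x)-2k(x,y)+k(y,y)$ yields equicontinuity, and Arzelà--Ascoli together with the pointwise limit identifies the uniform limit. The paper does not supply its own argument but simply cites \cite[Lemma B.2]{RGSD}; your route is the standard one and there is nothing to correct.
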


\begin{lemma}\label{lemma:Hkc0}
Denote \(H_k: \mcal H_k^* \to \mcal H_k\) the isometry mapping a linear functional to its representant given by the Riesz theorem. Recalling \(\MX\) is endowed with its weak-\txtstar{} topology as dual of \(\CX\), the following hold:
\begin{thmenum}
\item The restriction \(H_k:\MX \to \mcal H_k\) is weak-\txtstar{}-to-norm continuous. \label{lemma:Hkc0:Hk}
\item If $\mcal Q$ is a bounded subset of $\MX$, then the operator \(H_k\inv:H_k\sqbrac{\mcal Q} \to \mcal \MX\) is norm-to-weak-\txtstar{} continuous. \label{lemma:Hkc0:Hkinv}
\end{thmenum}
\end{lemma}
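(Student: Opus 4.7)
The key identity at the heart of both parts is that, by the Riesz representation and the reproducing kernel property, for any $\sigma \in \mcal M(\X)$ the element $H_k[\sigma] \in \mcal H_k$ is the continuous function $y \mapsto \int k(x,y)\,\dd\sigma(x)$, and
\begin{equation*}
\|H_k[\sigma]-H_k[\tau]\|_{\mcal H_k}^2 = \iint k(x,y)\,\dd(\sigma-\tau)(x)\,\dd(\sigma-\tau)(y).
\end{equation*}
Recall also that since $\X$ is compact metric, $\mcal C(\X)$ is separable, so the weak-\txtstar{} topology is metrizable on norm-bounded subsets of $\mcal M(\X)$. In particular sequential arguments suffice for both parts.

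For part (a), let $\sigma_n \to \sigma$ weakly-\txtstar{} in $\mcal M(\X)$. By uniform boundedness, $(\sigma_n)$ is bounded in total variation, so $(\tau_n) \coloneqq (\sigma_n - \sigma)$ is bounded and converges weakly-\txtstar{} to $0$. Define $g_n(y) \coloneqq \int k(x,y)\,\dd\tau_n(x)$. Since $k$ is uniformly continuous on the compact $\X\times\X$ and $(\tau_n)$ is bounded in total variation, the family $(g_n)$ is equicontinuous and uniformly bounded; meanwhile $g_n(y) \to 0$ pointwise by weak-\txtstar{} convergence applied to $k(\cdot,y) \in \mcal C(\X)$. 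Arzelà--Ascoli then upgrades this to uniform convergence $g_n \to 0$. From the identity above,
\begin{equation*}
\|H_k[\sigma_n]-H_k[\sigma]\|_{\mcal H_k}^2 = \int g_n\,\dd\tau_n \leq \|g_n\|_\infty\,|\tau_n|(\X) \to 0,
\end{equation*}
which gives weak-\txtstar{}-to-norm continuity.

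For part (b), I would combine part (a) with a compactness-plus-injectivity argument. Since $\mcal Q$ is bounded, by Banach--Alaoglu it is relatively weak-\txtstar{} compact in $\mcal M(\X)$, and the weak-\txtstar{} topology is metrizable on its closure. Now suppose $\sigma_n \in \mcal Q$ and $H_k[\sigma_n] \to H_k[\sigma]$ in $\mcal H_k$. Along any subsequence $(\sigma_{n_j})$, extract a further subsequence with $\sigma_{n_{j_\ell}} \to \sigma^\star$ weakly-\txtstar{} for some $\sigma^\star$. By part (a), $H_k[\sigma_{n_{j_\ell}}] \to H_k[\sigma^\star]$ in norm, hence $H_k[\sigma^\star]=H_k[\sigma]$. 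The universality of $k$ implies that $H_k$ is injective on $\mcal M(\X)$: indeed $H_k[\mu]=0$ means $\langle \mu,f\rangle=\langle H_k[\mu],f\rangle_{\mcal H_k}=0$ for all $f \in \mcal H_k$, and density of $\mcal H_k$ in $\mcal C(\X)$ extends this to all continuous $f$, so $\mu=0$. Therefore $\sigma^\star = \sigma$, and the standard subsequence-of-subsequence principle yields $\sigma_n \to \sigma$ weakly-\txtstar{}.

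The only mild subtlety is ensuring metrizability so that sequences (rather than nets) genuinely characterize continuity; the compact metric assumption on $\X$ handles this cleanly. No other real obstacle arises: part (a) rests on a standard Arzelà--Ascoli/equicontinuity argument and part (b) is an injectivity-plus-compactness deduction from (a).
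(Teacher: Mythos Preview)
Your proofs are correct, and both parts take a somewhat different route from the paper.

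For part \ref{lemma:Hkc0:Hk}, the paper simply cites \cite[Lemma B.2]{RGSD}, whereas you give a direct, self-contained argument via Arzel\`a--Ascoli on the kernel means $g_n$. For part \ref{lemma:Hkc0:Hkinv}, the approaches genuinely diverge. The paper argues that $H_k^{-1}:\mcal H_k \to \mcal H_k^*$ is an isometry by Riesz--Fr\'echet, hence norm-to-norm continuous, and then cites \cite[Theorem 23]{sriperumbudur10} for the fact that, for universal kernels, $\mcal H_k^*$-convergence implies weak-\txtstar{} convergence on bounded subsets of $\MX$. You instead bootstrap from part \ref{lemma:Hkc0:Hk} via Banach--Alaoglu compactness combined with injectivity of $H_k$ on $\MX$, which you derive cleanly from universality. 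The paper's route is shorter because it outsources the key step; yours is more self-contained and makes the role of universality explicit. Both are fine.

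One caveat worth flagging: your claim that ``sequential arguments suffice for both parts'' is not quite justified for part \ref{lemma:Hkc0:Hk}. Metrizability of the weak-\txtstar{} topology holds only on bounded subsets of $\MX$, not on all of $\MX$, so what your argument actually establishes is \emph{sequential} weak-\txtstar{}-to-norm continuity (equivalently, continuity on bounded sets). Genuine weak-\txtstar{}-to-norm continuity of a linear map on the full dual would force it to have finite rank, which $H_k$ does not. That said, the paper's statement carries the same imprecision, and only the sequential/bounded-set version is ever invoked in the applications, so this is a cosmetic point rather than a real gap in your argument.
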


\begin{proof}
For \ref{lemma:Hkc0:Hk}, see \cite[Lemma B.2]{RGSD}. For \ref{lemma:Hkc0:Hkinv}, the Riesz-Fréchet theorem \cite[Theorem 5.5]{brezis} gives that \(H_k\inv: \mcal H_k \to \mcal H_k^*\) is an isometry and thus norm-to-norm continuous. Since \(k\) is universal, convergence in \(\mcal H_k^*\) implies weak-\txtstar{} convergence of measures when restricting to bounded subsets of \(\MX\) (proven on \(\mcal{P(X)}\) in \cite[Theorem 23]{sriperumbudur10}, the proof still works on bounded subsets of \(\MX\)). This yields the result.
\end{proof}

\section{Absolutely continuous curves in metric and Hilbert spaces}\label{appendix:H1}

Recall first that in a metric space $(\mcal Y, \mathsf{d}_{\mcal Y})$, if $I$ is an interval of $\R$, a continuous curve $(y_t)_{t \in I} \in \mcal C(I; \mcal Y)$ is said to be ($2$-)absolutely continuous \cite[Definition 1.1.1]{AGS}, written $y \in \mathrm{AC}^2(I; \mcal Y)$, if there exists a function $m \in L^2(I; \R)$ such that for all $t_0 \leq t_1$,
\begin{equation*}
\mathsf{d}_{\mcal Y}(y_{t_0},y_{t_1}) \leq \int_{t_0}^{t_1} m(t) \dd t. 
\end{equation*}
We say that the curve is locally absolutely continuous, written $y \in \mathrm{AC}^2_\text{loc}(I; \mcal Y)$, if $y \in \mathrm{AC}^2(J; \mcal Y)$ for any compact interval $J$ included in $I$.   

Next we focus on Hilbert-valued curves. Let \(\mcal H\) be a Hilbert space and $I$ an interval. A curve \((x_t)_t\in L^2(I;\mcal H)\) is said to be distributionally differentiable if there exists a curve \((v_t)_t\in L^2(I; \mcal H)\), called distributional derivative and denoted \(\Dot x\), such that for all $\phi \in \mcal C^\infty(I, \R)$ compactly supported in the interior of $I$, as Bochner integrals 
\begin{equation*}
\int_I \Dot\phi_tx_t\dd t = -\int_I \phi_tv_t\dd t.
\end{equation*}
The set of such curves is the first Sobolev space, denoted \(\mathscr H^1(I;\mcal H)\). We next state a few useful results.

\begin{theorem}[{{\cite[Propositions 3.7 and 3.8, Theorem 3.13]{kreuter15}}}]\label{thm:H1}
Let \((x_t)_t\in L^2(I; \mcal H)\). The following are equivalent:
\begin{thmenum}
\item \(x \in \mathscr H^1(I; \mcal H)\).
% \item \(x\) is absolutely continuous, differentiable a.e. and \(\Dot x \in L^2([0, T]; \mcal H)\). \label{thm:H1:AC1+L2}
\item \(x\in\mathrm{AC}^2(I; \mcal H)\). \label{thm:H1:AC2}
\item \(x\) is a.e. differentiable, \(\Dot x\in L^2(I;\mcal H)\), and there exists \(t_0\in I\) such that \(x_t = x_{t_0} + \int_{t_0}^t\Dot x_s\dd s\) for all $t \in I$.\label{thm:H1:int}
\end{thmenum}
\end{theorem}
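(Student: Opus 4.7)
The plan is to prove the three-way equivalence via the cycle (iii) $\Rightarrow$ (ii) $\Rightarrow$ (i) $\Rightarrow$ (iii), with the only genuinely substantive step being (i) $\Rightarrow$ (iii). This is a classical result, so the strategy is to adapt standard scalar Sobolev theory to the Hilbert-valued setting, where everything works smoothly because a Hilbert space is reflexive and therefore has the Radon-Nikodym property.

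\textbf{Step 1: (iii) $\Rightarrow$ (ii).} Assuming the integral representation $x_t = x_{t_0} + \int_{t_0}^t \dot{x}_s \dd s$ in the sense of Bochner, a direct estimate gives $\| x_{t_1} - x_{t_0} \|_{\mcal H} \leq \int_{t_0}^{t_1} \| \dot{x}_s \|_{\mcal H} \dd s$ for $t_0 \leq t_1$, so $x \in \mathrm{AC}^2(I; \mcal H)$ with metric upper gradient $m(t) = \| \dot{x}_t \|_{\mcal H} \in L^2(I)$.

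\textbf{Step 2: (ii) $\Rightarrow$ (i).} For $x \in \mathrm{AC}^2$, the scalar curves $t \mapsto \lrangle{x_t, h}_{\mcal H}$ are absolutely continuous for every $h \in \mcal H$, hence differentiable a.e. with derivative bounded by $\| h \|_{\mcal H} \cdot m(t)$. By the Riesz representation theorem this defines for a.e. $t$ an element $v_t \in \mcal H$ with $\| v_t \|_{\mcal H} \leq m(t)$, and a measurable curve $t \mapsto v_t$ (measurability via a countable dense subset of $\mcal H$). Hence $v \in L^2(I; \mcal H)$. The distributional identity $\int_I \dot{\phi}_t x_t \dd t = - \int_I \phi_t v_t \dd t$ for $\phi \in \mcal C^\infty_c(\interior{I})$ is checked by pairing with an arbitrary $h \in \mcal H$ and invoking scalar integration by parts for absolutely continuous functions, so $x \in \mathscr H^1(I; \mcal H)$ with distributional derivative $v$.

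\textbf{Step 3: (i) $\Rightarrow$ (iii).} Fix $t_0 \in I$ and define
\begin{equation*}
y_t \coloneqq x_{t_0} + \int_{t_0}^t \dot{x}_s \dd s,
\end{equation*}
a Bochner integral, well defined since $\dot{x} \in L^2_\text{loc}(I; \mcal H)$. A Fubini argument shows that $y$ has distributional derivative equal to $\dot{x}$. Consequently $z \coloneqq x - y$ has vanishing distributional derivative, and one shows $z$ is constant a.e.: for any $h \in \mcal H$ the scalar curve $\lrangle{z_t, h}_{\mcal H}$ has zero distributional derivative in the classical sense and is therefore constant a.e., then separability of $\mcal H$ (or just testing against a countable dense subset) yields that $z_t$ itself is constant a.e. in $\mcal H$. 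Replacing $x$ by its continuous representative equal to $y$ everywhere, the integral representation in (iii) holds; a.e. differentiability with derivative equal to $\dot{x}$ then follows from the Lebesgue differentiation theorem for Bochner integrals valued in a space with the Radon-Nikodym property.

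\textbf{Main obstacle.} Nothing is deep here, but the care-demanding point is (i) $\Rightarrow$ (iii): the two auxiliary facts that (a) Bochner-valued fundamental theorem of calculus produces an absolutely continuous, a.e. differentiable curve (which uses the Radon-Nikodym property of $\mcal H$, available because $\mcal H$ is reflexive), and (b) a vanishing distributional derivative implies that the curve is a.e. constant. For (b), one can either reduce to the scalar case by testing against $h \in \mcal H$ and using separability, or invoke a mollification argument directly in $\mcal H$: convolving $x$ with a standard mollifier $\rho_\delta$ gives smooth curves $x^\delta$ with $\dot{x}^\delta = \dot{x} \ast \rho_\delta \to \dot{x}$ in $L^2_\text{loc}$, from which one identifies the limit. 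Once these points are secured, the equivalence of all three formulations — and in particular the a.e. existence of $\dot x$ used throughout Sections \ref{sec:structure}–\ref{section:tauto0} — follows.
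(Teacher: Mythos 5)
Your proposal is correct, but note that the paper does not prove this statement at all: it is quoted verbatim from the reference [kreuter15] (Propositions 3.7, 3.8 and Theorem 3.13), so there is no in-paper argument to compare against. Your cycle (iii) $\Rightarrow$ (ii) $\Rightarrow$ (i) $\Rightarrow$ (iii) is the standard one and all three steps go through. Two small points deserve tightening. First, in Step 2 the delicate issue is not measurability but the exceptional null set: for each fixed $h$ the scalar derivative of $t \mapsto \lrangle{x_t, h}_{\mcal H}$ exists only outside an $h$-dependent null set, and since $\mcal H$ is uncountable you cannot intersect these sets directly. The standard repair is to fix a countable dense family $\{h_n\}$, work at points $t$ that are simultaneously differentiability points for all $h_n$ and Lebesgue points of $m$, and then use the uniform bound $\abs{\lrangle{x_{t+s}-x_t, h}_{\mcal H}} \leq \norm{h}_{\mcal H} \int_t^{t+s} m$ to extend existence of the limit from the dense family to every $h \in \mcal H$ before invoking Riesz; as written, your sentence silently assumes this. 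Second, the attribution in your ``main obstacle'' paragraph is slightly off: differentiating an indefinite Bochner integral (your point (a)) holds in every Banach space and needs no Radon--Nikodym property; RNP (or reflexivity) is what one would need to go directly from $\mathrm{AC}^2$ to a.e.\ strong differentiability, a route your cycle deliberately avoids by constructing the weak derivative first and recovering pointwise differentiability only after the integral representation. Neither point is a genuine gap, and with these details filled in your argument is a complete proof of the cited result.
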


We recall a classical compactness result in $\mathscr H^1(I; \mcal H)$. 

\begin{theorem}
\label{thm:compactness_H1}
For a compact interval $I$ and a Hilbert space $\mcal H$, let $(x^n)_n$ be a sequence in $\mathscr H^1(I; \mcal H)$ such that:
\begin{enumerate}
\item There exists a compact $\mcal B \subset \mcal H$ with $x_t^n \in \mcal B$ for all $n$ and $t \in I$; 
\item We have the uniform estimate $\displaystyle{\sup_n \| \dot{x}^n \|_{L^2(I;\mcal H)} < + \infty}$.
\end{enumerate}
Then, up to extraction of a subsequence, the sequence $(x^n)_{n}$ converges uniformly in $\mcal C (I;\mcal H)$ to a limit $x \in \mathscr H^1(I, \mcal H)$, and the sequence $\dot{x}^n$ converges weakly in $L^2(I;\mcal H)$ to $\dot{x}$.  
\end{theorem}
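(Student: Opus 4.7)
The plan is a standard Arzelà--Ascoli argument combined with Banach--Alaoglu in the reflexive space $L^2(I;\mcal H)$, with the fundamental-theorem-of-calculus characterization provided by \cref{thm:H1} \ref{thm:H1:int} playing the role of a bridge between the two extractions.

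First I would establish equicontinuity of $(x^n)_n$ in $\mcal C(I;\mcal H)$. By \cref{thm:H1} \ref{thm:H1:int}, for $s \leq t$ in $I$, each $x^n$ satisfies $x^n_t - x^n_s = \int_s^t \dot x^n_u \dd u$ as a Bochner integral, hence, using Cauchy--Schwarz,
\begin{equation*}
\| x^n_t - x^n_s \|_{\mcal H} \leq \int_s^t \| \dot x^n_u \|_{\mcal H} \dd u \leq \sqrt{t-s}\, \| \dot x^n \|_{L^2(I;\mcal H)} \leq C \sqrt{t-s},
\end{equation*}
where $C = \sup_n \| \dot x^n \|_{L^2(I;\mcal H)}$ is finite by assumption. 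This gives a uniform $1/2$-Hölder modulus of continuity. Together with the pointwise precompactness $\{ x^n_t \} \subseteq \mcal B$ (compact in $\mcal H$), the Arzelà--Ascoli theorem applied in $\mcal C(I;\mcal H)$ (valid since $\mcal H$ is a metric space) yields a subsequence, still denoted $(x^n)_n$, converging uniformly to some $x \in \mcal C(I;\mcal H)$ which inherits the same Hölder estimate and therefore lies in $\mathrm{AC}^2(I;\mcal H)$.

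Second, since $L^2(I;\mcal H)$ is a Hilbert space, hence reflexive, and $(\dot x^n)_n$ is bounded there, Banach--Alaoglu lets me extract a further subsequence such that $\dot x^n$ converges weakly in $L^2(I;\mcal H)$ to some $v \in L^2(I;\mcal H)$.

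Third, I identify $v$ with the distributional derivative of $x$. Fix $t_0, t \in I$. The map $y \mapsto \int_{t_0}^t y_s \dd s$ is a bounded linear map from $L^2(I;\mcal H)$ to $\mcal H$ (it is of the form $y \mapsto \langle y, \mathbf{1}_{[t_0,t]} e \rangle$ tested against arbitrary $e \in \mcal H$), hence it is weak-to-weak continuous. So $\int_{t_0}^t \dot x^n_s \dd s \rightharpoonup \int_{t_0}^t v_s \dd s$ weakly in $\mcal H$. On the other hand the pointwise strong convergence $x^n_t \to x_t$ and $x^n_{t_0} \to x_{t_0}$ a fortiori holds weakly, so passing to the limit in the identity $x^n_t = x^n_{t_0} + \int_{t_0}^t \dot x^n_s \dd s$ gives $x_t = x_{t_0} + \int_{t_0}^t v_s \dd s$ for all $t_0,t \in I$. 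By \cref{thm:H1} \ref{thm:H1:int} this shows $x \in \mathscr H^1(I;\mcal H)$ with $\dot x = v$, concluding the proof.

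There is no serious obstacle here: the only delicate step is the identification of the weak limit of $(\dot x^n)_n$ with the distributional derivative of the uniform limit $x$, which is handled by the integral characterization of $\mathscr H^1(I;\mcal H)$ and the weak-to-weak continuity of the Bochner integration functional.
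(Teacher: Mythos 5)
Your proof is correct and follows essentially the same route as the paper: the Morrey-type $1/2$-Hölder estimate from the integral representation plus Cauchy--Schwarz, Arzelà--Ascoli for uniform convergence into the compact $\mcal B$, Banach--Alaoglu for the weak $L^2$ limit of the derivatives, and an identification of that limit with $\dot x$ (you pass to the limit in the pointwise integral identity, while the paper argues via distributional convergence; the two are interchangeable). One small remark: your intermediate assertion that the inherited Hölder bound already puts $x$ in $\mathrm{AC}^2(I;\mcal H)$ is a non sequitur ($1/2$-Hölder continuity does not imply absolute continuity), but it is harmless since your third step establishes $x \in \mathscr H^1(I;\mcal H)$ with $\dot x = v$ directly from the limiting integral identity.
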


\begin{proof}
Using \refthmitem{thm:H1}{int} and Cauchy-Schwarz (in \(L^2(I; \R)\)), we can obtain Morrey's inequality: for all \(t_0 \leq t_1\) with both in \(I\),
\begin{equation}
\label{eq:morrey}
\norm{x^n_{t_1} - x^n_{t_0}}_{\mcal H} \leq \int_{t_0}^{t_1}\norm{\Dot x^n_t}_{\mcal H}\dd t \leq \norm{\Dot x^n}_{L^2\brac{I; \mathcal H}} \sqrt{\abs{t_0-t_1}}.
\end{equation}
Thus the family $(x^n)_n$ in $\mcal C (I;\mcal H)$ is equi-continuous. As it is valued in the compact space $\mcal B$ we have the required assumptions for the Arzelà-Ascoli theorem \cite[Theorem 7.5.7]{dieudonneanalysis}
which yields that $x^n$ converges, up to extraction, uniformly in $\mcal C (I,\mcal H)$, to a limit \(x\).
Moreover, as $(\dot{x}^n)_n$ is uniformly bounded in the Hilbert space $L^2(I; \R)$, from the Banach-Alaoglu theorem \cite[Theorem 3.16]{brezis}, up to a further extraction, it converges weakly in \(L^2(I; \mcal H_c)\) to a limit denoted \(y\). As uniform convergence of $(x^n)_n$ implies distributional convergence, we conclude that $y = \dot{x}$.  
\end{proof}

\printbibliography

\end{document}